\theoremstyle{definition}
\newtheorem{theorem}{Theorem}[section]
\newtheorem{prop}[theorem]{Proposition}
\newtheorem{lemma}[theorem]{Lemma}
\newtheorem{definition}{Definition}[section]
\renewcommand{\hom}{Hom}
\newcommand{\Z}{\mathcal{Z}}
\newcommand{\id}{\text{id}}
\newcommand{\Aut}[1]{\text{Aut(#1)}}
\DeclareMathOperator{\im}{\text{im}}
\title{The Cohomology Class of the Mod 4 Braid Group}
\author{Trevor Nakamura}
\date{}
\begin{document}
\maketitle
\begin{abstract}
The mod 4 braid group, $\Z_{n}$, is defined to be the quotient of the braid group by the subgroup of the pure braid group generated by squares of all elements, $PB_{n}^{2}$. Kordek and Margalit proved $\Z_{n}$ is an extension of the symmetric group by $\mathbb{Z}_{2}^{\binom{n}{2}}$. For $n\geq 1$, we construct a 2-cocycle in the group cohomology of the symmetric group with twisted coefficients classifying $\Z_{n}$. We show this cocycle is the$\mod2$ reduction of the 2-cocycle corresponding to the extension of the symmetric group by the abelianization of the pure braid group. We also construct the 2-cocycle corresponding to this second extension and show it represents an order two element in the cohomology of the symmetric group. Furthermore, we give presentations for both extensions and a normal generating set for $PB_{n}^{2}$. 
\end{abstract}
\section{Introduction}
In 2014 Brendle and Margalit proved the level 4 congruence subgroup, $B_{n}[4]$, of the braid group, $B_{n}$, is the subgroup of the pure braid group generated by squares of all elements, $PB_{n}^{2}$ \cite{B/M}. More recently, Kordek and Margalit showed the quotient of the braid group by $PB_{n}^{2}$, which we define as the mod 4 braid group and denote $\Z_{n}$, is an extension of the symmetric group, $S_{n}$, by $\mathbb{Z}_{2}^{n\choose2}$ \cite{K/M}. Eilenberg and MacLane proved we can classify group extensions by low dimensional cohomology classes in the group cohomology with twisted coefficients \cite{E/M}. In particular, $\Z_{n}$ is classified by a class in the second cohomology of the symmetric group with coefficients in $\mathbb{Z}_{2}^{n\choose2}$ twisted by the action of the symmetric group permuting unordered pairs of integers. We begin by constructing a CW-complex which is the two-skeleton of an Eilenberg-MacLane space for the symmetric group. This Eilenberg-MacLane space gives us a low dimensional approximation for a resolution of $\mathbb{Z}$ over $\mathbb{Z}S_{n}$. Then we construct a presentation for the extension of the symmetric group by the abelianization of the pure braid group. We obtain our main results from classifying the extension of the symmetric group by the abelianization of the pure braid group and composing this 2-cocycle with the$\mod$2 reduction of integers.

Let $G$ be a group and $A$ be a $G$-module. We represent the, possibly nontrivial, action of $G$ on $A$ by a map $\theta:G\to\Aut{A}$. For an extension $E$ of $G$ by $A$, with $\iota:A\to E$ and $\pi:E\to G$, we say $E$ gives rise to $\theta$ if conjugating an element of $\iota(A)$ by any $e\in E$ is determined by $\theta(\pi(e))$. The cohomology of $G$ with coefficients in $A$ is equivalent to the cohomology of an Eilenberg-MacLane space for $G$, referred to as a $\mathcal{K}(G,1)$-space, with a local coefficients system of $A$ determined by $\theta$. We use $H^{2}(G;A)$ to denote the second cohomology group of $G$ with coefficients in $A$ twisted by $\theta$. Fixing the action of $G$ on $A$, Eilenberg and MacLane proved there exists a bijection between $H^{2}(G,A)$ and the set of group extensions $E$, up to equivalence, of $G$ by $A$ which give rise to $\theta$. In this paper we will only consider the standard action of the symmetric group on unordered pairs of integers between 1 and $n$; therefore we often omit $\theta$. We give a more explicit explanation of the background for group cohomology in section 2.2.

Extensions of the symmetric group arise naturally while studying quotients of the braid group, $B_{n}$. The level $m$ congruence subgroup of the braid group, $B_{n}[m]$, is defined more generally as the kernel of a map: $B_{n}\to GL_{n}(\mathbb{Z}_{m})$ (more information is given in section 2.3). In 2018 Kordek and Margalit showed $\mathcal{Z}_{n}=B_{n}/B_{n}[4]$ is an extension of $S_{n}$ by $\mathbb{Z}_{2}^{n\choose 2}$ where $S_{n}$ permutes the generators of $\mathbb{Z}_{2}^{n\choose 2}$ by the standard action on unordered pairs of $\{1,\ldots, n\}$. Further work by Appel, Bloomquist, Gravel, and Holden prove that for $m$ odd, $B_{n}[m]/B_{n}[4m]\approx\Z_{n}$ \cite{A/B/G/H}. In this paper we will give descriptions of $\Z_{n}$ by both the corresponding 2-cocycle in $H^{2}(S_{n};\mathbb{Z}_{2}^{n\choose2})$ and by a group presentation. Our first theorem shows the cocycle classifying $\Z_{n}$ is determined by a cocycle which classifies an extension of $S_{n}$ by $\mathbb{Z}^{n\choose 2}$.
\begin{theorem}\label{main theorem}
	If $n\geq 1$, then the cohomology class $[\kappa]\in H_{}^{2}(S_{n};\mathbb{Z}_{2}^{n\choose2})$ classifying $\Z_{n}$ as an extension of $S_{n}$ by $\mathbb{Z}_{2}^{n\choose2}$ is the$\mod 2$ reduction of an element $[\phi]\in H^{2}_{}(S_{n};\mathbb{Z}^{n\choose2})$ of order $2$.
\end{theorem}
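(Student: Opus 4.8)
The plan is to realize the two extensions as the rows of a pushout square and then extract both conclusions from change-of-coefficient naturality together with a Shapiro-lemma computation. Write $E = B_{n}/[PB_{n},PB_{n}]$ for the extension of $S_{n}$ by $PB_{n}^{\mathrm{ab}} = \mathbb{Z}^{\binom{n}{2}}$ classified by $[\phi]$, and recall that $\Z_{n} = B_{n}/PB_{n}^{2}$ is the extension classified by $[\kappa]$. First I would check that $[PB_{n},PB_{n}] \subseteq PB_{n}^{2}$, which holds because $PB_{n}/PB_{n}^{2}$ is elementary abelian; hence $\Z_{n}$ is a further quotient of $E$, and the induced map on kernels $\mathbb{Z}^{\binom{n}{2}} = PB_{n}/[PB_{n},PB_{n}] \to PB_{n}/PB_{n}^{2} = \mathbb{Z}_{2}^{\binom{n}{2}}$ is exactly reduction mod $2$, since $PB_{n}^{2}$ maps onto $2\,PB_{n}^{\mathrm{ab}}$ in the abelianization. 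This reduction map $r$ is $S_{n}$-equivariant for the permutation action on pairs.

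For the mod $2$ statement, since $\Z_{n} = E/\ker r$ with $\ker r = 2\,\mathbb{Z}^{\binom{n}{2}}$, the extension $\Z_{n}$ is the pushout of $E$ along the surjection $r$. By the naturality of the Eilenberg--MacLane correspondence under an equivariant change of coefficients---pushing an extension forward along $r$ induces $r_{*}$ on $H^{2}$---I obtain $[\kappa] = r_{*}[\phi]$, which is precisely the assertion that $[\kappa]$ is the mod $2$ reduction of $[\phi]$. Alternatively, using the explicit cocycle the paper constructs from its partial resolution, one verifies $\kappa = r\circ\phi$ directly at the cochain level.

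For the order, I would first bound it by computing the ambient group. The module $\mathbb{Z}^{\binom{n}{2}}$ is the permutation module $\mathrm{Ind}_{S_{2}\times S_{n-2}}^{S_{n}}\mathbb{Z}$ on the stabilizer of an unordered pair, so Shapiro's lemma gives $H^{2}(S_{n};\mathbb{Z}^{\binom{n}{2}}) \cong H^{2}(S_{2}\times S_{n-2};\mathbb{Z})$. For a finite group $G$ the universal coefficient theorem yields $H^{2}(G;\mathbb{Z}) \cong G^{\mathrm{ab}}$, so the right-hand side is $(S_{2}\times S_{n-2})^{\mathrm{ab}}$, an elementary abelian $2$-group. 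Hence every class, and in particular $[\phi]$, has order dividing $2$.

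It then remains to show $[\phi]\neq 0$, i.e. that $E$ does not split; assume $n\geq 2$ so that a transposition exists. Any lift of $\tau = (i\ i{+}1)$ to $E$ has the form $\sigma_{i}a$ with $a\in\mathbb{Z}^{\binom{n}{2}}$, and I would compute $(\sigma_{i}a)^{2} = \sigma_{i}^{2}\,(\tau\cdot a)\,a$, where $\sigma_{i}^{2} = A_{i,i+1}$ projects to the generator $x$ indexed by $\{i,i{+}1\}$. Writing the kernel additively, splitting would force $(1+\tau)a = -x$; but $\tau$ fixes the pair $\{i,i{+}1\}$, so the $\{i,i{+}1\}$-coordinate of $(1+\tau)a$ equals $2a_{\{i,i+1\}}$, an even integer, while that of $-x$ is $-1$. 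This contradiction shows no section exists, so $[\phi]\neq 0$, and combined with the previous paragraph $[\phi]$ has order exactly $2$. The main obstacle I anticipate is not any single step above but the bookkeeping needed to make the pushout identification precise---in particular verifying $PB_{n}^{2}/[PB_{n},PB_{n}] = 2\,PB_{n}^{\mathrm{ab}}$ and that the resulting kernel map is genuinely reduction mod $2$---which is exactly where the explicit cocycle built from the two-skeleton of the $\mathcal{K}(S_{n},1)$ does the real work in the paper's approach.
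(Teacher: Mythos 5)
Your proposal is correct, and it reaches the theorem by a genuinely different route on the two substantive points, so a comparison is worthwhile. The mod~$2$ reduction step is the same in substance as the paper's: your observation that $[PB_{n},PB_{n}]\subseteq PB_{n}^{2}$ and that $PB_{n}^{2}$ has image $2\,PB_{n}^{\mathrm{ab}}$ in the abelianization is exactly what Theorem~\ref{thm: presentation of Z} records as $\Z_{n}\approx G_{n}/A_{n}$, and your appeal to naturality of the Eilenberg--MacLane correspondence under the pushout along $r$ is what Theorem~\ref{thm: cohom class of Z} carries out concretely via the map $f:G_{n}\to\Z_{n}$ and the explicit cocycles $\phi$ and $\kappa=\eta\circ\phi$. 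Where you diverge is the order of $[\phi]$. The paper argues constructively: Lemma~\ref{lemma: cohom class t phi} builds presentations $G_{n}^{t}$ realizing $t\cdot[\phi]$, and Theorem~\ref{splitting of G^t} exhibits an explicit splitting $\omega(\sigma_{i})=\tilde{\sigma}_{i,i+1}g_{i,i+1}^{-t/2}$ for even $t$ via a lengthy relation-chase, while nontriviality of $[\phi]$ comes from Kordek--Margalit's nonsplitness of $\Z_{n}$ through $[\kappa]=\eta_{*}[\phi]$. You instead bound the order by computing the ambient group outright: since $\mathbb{Z}^{\binom{n}{2}}$ is the permutation module $\mathrm{Ind}_{S_{2}\times S_{n-2}}^{S_{n}}\mathbb{Z}$ (the one hypothesis you must verify is that the action carries no sign twists, which the paper's relation R6 and Theorem~\ref{thm: conj full by half with intersection} establish), Shapiro's lemma plus $H^{2}(G;\mathbb{Z})\cong G^{\mathrm{ab}}$ for finite $G$ gives $H^{2}(S_{n};\mathbb{Z}^{\binom{n}{2}})\cong(S_{2}\times S_{n-2})^{\mathrm{ab}}$, an elementary abelian $2$-group, so \emph{every} class has order dividing $2$ --- a stronger and far shorter conclusion than the paper's. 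Your parity obstruction for nonsplitness (a section would force $(1+\tau)a=-x$, whose $\{i,i+1\}$-coordinate is even on the left and $-1$ on the right) is correct, self-contained, and in fact applies verbatim mod $2$ to reprove $[\kappa]\neq0$ without citing Kordek--Margalit; just be careful that with the convention $\tau\cdot a=\sigma a\sigma^{-1}$ one gets $(\sigma_{i}a)^{2}=\sigma_{i}^{2}(\tau^{-1}\cdot a)a$, harmless here since $\tau^{2}=1$, and note both your argument and the paper's implicitly take $n\geq2$ (for $n=1$ the coefficient module vanishes and the statement degenerates). What your route buys is brevity and a sharper structural statement; what the paper's route buys is the explicit presentations and cocycle formulas that it needs anyway for Theorems~\ref{def of cocycle} and~\ref{normal gen set for B_{n}}, which your abstract argument would not produce.
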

We define representatives of our cohomology classes by constructing a low dimensional approximation for a cellular chain complex of the universal cover of a $\mathcal{K}(S_{n},1)$-space. Furthermore, we show that the 2-chains of this chain complex are generated by the $S_{n}$ orbits of three classes of elements: $\tilde{c}_{i,j}$, $\tilde{d}_{i,j,k,\ell}$, and $\tilde{e}_{i,k,j}$ where $1\leq i<j\leq n$ and $1\leq k<\ell\leq n$. Respectively, each of these three types of generators correspond to the squaring, commuting, and braid relations in the presentation for the symmetric group. Note that the generators $\tilde{e}_{i,k,j}$ are only required for $n\geq 3$ while  $\tilde{d}_{i,j,k,\ell}$ are required only for $n\geq 4$. Therefore, the cocycle representing $[\kappa]$ is determined as a function from the $S_{n}$ module generated by these three classes of 2-chains to $\mathbb{Z}_{2}^{n\choose2}$. Using the presentation:
\begin{equation}\label{pres of Z mod 2}
	\mathbb{Z}_{2}^{n\choose2}=\langle \{\bar{g}_{i,j}\}_{1\leq i\neq j\leq n} \mid \bar{g}_{i,j}^{2}=1,\hspace{.5em}[\bar{g}_{i,j},\bar{g}_{k,\ell}]=1\rangle
\end{equation}
our second theorem defines the 2-cocycle classifying $\Z_{n}$.
\begin{theorem}\label{def of cocycle}
	Let $i<j$ and $k<\ell$. If $n\geq 1$, then a representative for the cocycle classifying $\Z_{n}$ as an extension of $S_{n}$ by $\mathbb{Z}_{2}^{n\choose2}$ is given by:
	\begin{align*}
		\kappa(\tilde{c}_{i,j})&=\bar{g}_{i,j}\\
		\kappa(\tilde{d}_{i,j,k,\ell})&=
		\begin{cases}
			\bar{g}_{i,k}+\bar{g}_{k,j}+\bar{g}_{i,\ell}+\bar{g}_{\ell, j}\hspace{1em}&i<k<j<\ell\text{ or }k<i<\ell<j\\
			0\hspace{1em}&\text{otherwise}
		\end{cases}\\
		\kappa(\tilde{e}_{i,k,j})&=
		\begin{cases}
			\bar{g}_{i,j}+\bar{g}_{j,k}\hspace{1em}&i<k<j,\hspace{.5em}j<i<k,\text{ or }k<j<i\\
			0\hspace{1em}&\text{otherwise}
		\end{cases}
	\end{align*}
\end{theorem}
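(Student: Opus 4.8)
The plan is to read $\kappa$ off as the obstruction to lifting the defining relations of $S_n$ through the projection $\pi:\mathcal{Z}_n\to S_n$. Recall the standard dictionary between extensions and low-dimensional cohomology (Brown, Ch.~IV; Eilenberg--MacLane): once the $2$-skeleton of a $\mathcal{K}(S_n,1)$ has been built with one $1$-cell per generating transposition and one $2$-cell per defining relation, a $2$-cocycle in $\mathrm{Hom}_{\mathbb{Z}S_n}(C_2,A)$ representing the class of an extension $1\to A\to\mathcal{Z}_n\xrightarrow{\pi} S_n\to1$ is obtained by choosing a set-theoretic lift $\hat\tau$ of each generator $\tau$ and assigning to each relator $r$ the element of $A=\mathbb{Z}_2^{\binom{n}{2}}$ obtained by substituting these lifts into $r$. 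Since the three orbit representatives $\tilde c_{i,j}$, $\tilde d_{i,j,k,\ell}$, $\tilde e_{i,k,j}$ are precisely the $2$-cells attached along the squaring, commuting, and braid relations among the transpositions $\tau_{a,b}=(a\,b)$, it suffices to evaluate these three substitutions; $S_n$-equivariance then determines $\kappa$ on all of $C_2$, and the resulting cochain is automatically a cocycle because it comes from the genuine group $\mathcal{Z}_n$.

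First I would fix the lifts: let $t_{a,b}\in\mathcal{Z}_n$ be the image of the half-twist (band generator) of $B_n$ interchanging strands $a$ and $b$, so that $\pi(t_{a,b})=\tau_{a,b}$, and identify $\bar g_{a,b}\in A$ with the class of the full twist $t_{a,b}^{2}$, a generator of $PB_n^{ab}$, reduced into $PB_n/PB_n^{2}$. The squaring computation is then immediate, since $\hat\tau_{i,j}^{\,2}=t_{i,j}^{2}\mapsto\bar g_{i,j}$, giving $\kappa(\tilde c_{i,j})=\bar g_{i,j}$. For the commuting relation I would compute the pure braid $[t_{i,j},t_{k,\ell}]$: when the defining arcs of the two half-twists can be isotoped to be disjoint (the unlinked and nested index orderings) they already commute in $B_n$ and the obstruction vanishes, whereas when the pairs interleave ($i<k<j<\ell$ or $k<i<\ell<j$) the commutator is a nontrivial pure braid, which I would write as a word in the standard generators $A_{a,b}$ and abelianize to read off $\bar g_{i,k}+\bar g_{k,j}+\bar g_{i,\ell}+\bar g_{\ell,j}$. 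The braid case $\tilde e_{i,k,j}$ is handled identically: substitute the lifts into the $S_3$-relation carried by the three transpositions on $\{i,j,k\}$, check that the result is pure, and compute its image in $PB_n^{ab}\otimes\mathbb{Z}_2$.

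The main obstacle is the braid-theoretic bookkeeping in this second step: expressing each commutator and each braid-relation lift as an explicit word in the pure-braid generators $A_{a,b}$ and pushing it through abelianization and mod-$2$ reduction, all while keeping the crossing conventions consistent so that the value depends only on the relative order of the indices and not on the representative chosen within an $S_n$-orbit. Pinning down exactly which interleaving patterns contribute and which give $0$ is where the geometry of linked versus unlinked strands must be tracked carefully; this is cleanest to carry out by drawing the braids and reading off pairwise linking numbers, since the class of a pure braid in $PB_n^{ab}=\mathbb{Z}^{\binom{n}{2}}$ is recorded exactly by those linking numbers.

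Finally, I would cross-check against Theorem~\ref{main theorem}. Running the identical lifting procedure in the extension of $S_n$ by $PB_n^{ab}=\mathbb{Z}^{\binom{n}{2}}$, namely $B_n/[PB_n,PB_n]$, produces the integral cocycle $\phi$, and reducing each of its values mod $2$ must return the formula above; this both corroborates the computation and displays $\kappa$ as the mod-$2$ reduction asserted there. I would also note that a different choice of lifts $t_{a,b}$ changes $\kappa$ only by a coboundary, so the cohomology class, and hence the claim that $\kappa$ classifies $\mathcal{Z}_n$, is independent of these choices.
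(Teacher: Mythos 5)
Your proposal is correct and follows essentially the same route as the paper: lift each transposition $\sigma_{a,b}$ to the image of the band-generator half-twist, evaluate the squaring, commuting, and braid relators as pure braids, and read off their classes in the abelianized pure braid group via pairwise winding (linking) numbers, exactly as in the paper's strand-diagram computations and its Theorem~\ref{thm: definition of cohom class}. The only cosmetic difference is ordering: the paper first computes the integral cocycle $\phi$ in $G_{n}=B_{n}/[PB_{n},PB_{n}]$ (Theorem~\ref{thm: cohom of G_{n}}) and then obtains $\kappa=\eta\circ\phi$ by pushing through the mod~$2$ reduction (Theorem~\ref{thm: cohom class of Z}), whereas you compute $\kappa$ directly in $\mathcal{Z}_{n}$ and use the integral computation as a cross-check.
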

 In the construction of Theorem \ref{def of cocycle} we give a presentation for $\mathcal{Z}_{n}$ with relations in Table \ref{Relations of Z}. Furthermore, considering Artin's original presentation for the braid group given by:
\begin{equation}\label{Artin braid}
	B_{n}=\left\langle b_{1},\ldots,b_{n}\middle\mid\hspace{.5em}\begin{aligned}& b_{i}b_{i+1}b_{i}=b_{i+1}b_{i}b_{i+1}\text{ for all }i\\
	&b_{i}b_{j}=b_{j}b_{i}\text{ if }|i-j|>1
	\end{aligned}\right\rangle
\end{equation}
our presentation for $\mathcal{Z}_{n}$ yields a normal generating set for $PB_{n}^{2}$ as a subgroup of $B_{n}$. Since Brendle and Margalit proved $PB_{n}^{2}=B_{n}[4]$, we get a normal generating set for $B_{n}[4]$ in Theorem \ref{normal gen set for B_{n}}. To simplify notation in the statement of Theorem \ref{normal gen set for B_{n}}, let $b_{i,j}$ be the conjugation of $b_{j-1}$ by $b_{i}b_{i+1}\cdots b_{j-2}$. 
\begin{theorem}\label{normal gen set for B_{n}}
	For all $n\geq 1$, $B_{n}[4]$ is normally generated as a subgroup of the braid group by elements of the form:
	\begin{enumerate}
		\item $[b_{i}^{2},b_{i+1}^{2}]$ for all $1\leq i\leq n-1$.
		\item $[b_{i,i+2}^{2},b_{i+1,i+3}^{2}]$ for all $1\leq i\leq n-3$
		\item $b_{i}^{4}$ for all $1\leq i\leq n-1$
	\end{enumerate}
\end{theorem}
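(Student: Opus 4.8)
The plan is to extract the normal generating set directly from the comparison of two presentations. Write Artin's presentation \eqref{Artin braid} as $B_n = \langle X \mid R\rangle$ with $X = \{b_1,\dots,b_{n-1}\}$, and observe that the presentation of $\Z_n$ recorded in Table \ref{Relations of Z} is, after rewriting, of the form $\langle X \mid R \cup W\rangle$ for a finite set $W$ of words in $X$: its generators are the Artin generators, and each band generator $b_{i,j}$ appearing in a relation is replaced by its standard expression as a conjugate of an adjacent generator (in the standard convention, $b_{i,j} = (b_{j-1}\cdots b_{i+1})\,b_i\,(b_{i+1}^{-1}\cdots b_{j-1}^{-1})$). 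I would then invoke the standard fact that if two presentations share a generating set and differ only by the extra relators $W$, the kernel of the induced surjection $B_n \to \Z_n$ is the normal closure of $W$ in $B_n$. Since Brendle and Margalit proved $B_n[4] = PB_n^2 = \ker(B_n \to \Z_n)$, the theorem reduces to showing that, modulo normal closure, $W$ is exactly the three listed families.

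The easy direction is to confirm each listed element lies in the kernel. Under the quotient $B_n \to \Z_n$ we have $b_i^2 \mapsto \bar g_{i,i+1}$ and, after rewriting, $b_{i,j}^2 \mapsto \bar g_{i,j}$. Because the images satisfy $\bar g_{i,j}^2 = 1$ and $[\bar g_{i,j}, \bar g_{k,\ell}] = 1$ by \eqref{pres of Z mod 2}, we obtain $b_i^4 \mapsto \bar g_{i,i+1}^2 = 1$, $[b_i^2, b_{i+1}^2] \mapsto [\bar g_{i,i+1}, \bar g_{i+1,i+2}] = 1$, and $[b_{i,i+2}^2, b_{i+1,i+3}^2] \mapsto [\bar g_{i,i+2}, \bar g_{i+1,i+3}] = 1$, so all three families are contained in $B_n[4]$.

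For the reverse containment I would organize the relators $W$ according to which defining relation of $\mathbb{Z}_2^{\binom{n}{2}}$ from \eqref{pres of Z mod 2} they enforce. The relations $\bar g_{i,j}^2 = 1$ are produced by the fourth powers: since $b_{i,j}^2$ is conjugate to $b_i^2$, the word $b_{i,j}^4$ lies in the normal closure of $b_i^4$, so the single family $b_i^4$ accounts for all order-two relations. The commuting relations $[\bar g_{i,j}, \bar g_{k,\ell}] = 1$ I would treat geometrically, using that $B_n$ is the mapping class group of the $n$-punctured disk and that each band generator corresponds to a chord joining two punctures. By the change-of-coordinates principle, an unordered pair of chords falls, up to the $B_n$-action, into exactly three topological types: chords sharing an endpoint, linked chords, and unlinked chords. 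Unlinked chords give commuting squares already in $B_n$ and contribute no relator; the shared-endpoint type has representative $\{i,i+1\},\{i+1,i+2\}$, whose commutator is a $B_n$-conjugate of $[b_i^2,b_{i+1}^2]$; and the linked type has representative $\{i,i+2\},\{i+1,i+3\}$, whose commutator is a $B_n$-conjugate of $[b_{i,i+2}^2,b_{i+1,i+3}^2]$. Thus every commuting relator is a conjugate of one of the two listed commutators, and $W$ lies in the normal closure of the three families.

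The main obstacle I anticipate is the rewriting step together with the twisting recorded in Theorem \ref{def of cocycle}. The lifts to $\Z_n$ of the braid ($\tilde e$) and commuting ($\tilde d$) relations are not simply "the $\bar g$'s commute" but carry the extra terms $\bar g_{i,k} + \bar g_{k,j} + \bar g_{i,\ell} + \bar g_{\ell,j}$ and $\bar g_{i,j} + \bar g_{j,k}$; I must therefore verify that when these relations are expressed as words in the $b_i$, the extra terms are already consequences of $R$ together with the three families, so that no relator beyond them is needed, and that the nonzero versus "otherwise $= 0$" dichotomy in $\kappa$ matches precisely the linked versus unlinked dichotomy above. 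I expect this to require a case analysis on the cyclic orderings $i<k<j<\ell$, $k<i<\ell<j$, and the remaining configurations, together with a short induction (on the span $j-i$ of a band generator, with the adjacent generators as the base case) confirming that the specific index ranges in the statement already generate, by conjugation, every shared-endpoint and every linked configuration for each fixed $n$.
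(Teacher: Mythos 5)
Your skeleton matches the paper's: both proofs pit the presentation of $\Z_{n}$ from Table \ref{Relations of Z} against the quotient of Artin's presentation \eqref{Artin braid} by the three listed families, and both conclude via $\Z_{n}=B_{n}/B_{n}[4]$ (Brendle--Margalit). Where you genuinely diverge is the mechanism for the hard containment: the paper performs explicit Tietze transformations on Table \ref{Relations of Z} --- replacing $\bar{g}_{i,j}$ by $\bar{\sigma}_{i,j}^{2}$, cutting the generating set down to $\{\bar{\sigma}_{i,i+1}\}$ via $\mathfrak{R}3$, merging $\mathfrak{R}3$/$\mathfrak{R}4$ into the braid relation, and restricting $\mathfrak{R}0$--$\mathfrak{R}2$ and $\mathfrak{R}1$ to recover exactly the presentation \eqref{Z_n presentation generated by b_i} --- whereas you classify unordered pairs of band arcs up to the $B_{n}$-action (shared endpoint, linked, unlinked) and conjugate each commutator relator to a standard representative. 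Your change-of-coordinates argument is attractive because it supplies something the paper's proof only asserts in passing: a reason why the two listed commutator families, together with their $B_{n}$-conjugates, account for \emph{all} relators $[b_{i,j}^{2},b_{k,\ell}^{2}]$, not just the adjacent-index ones.

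The gap is the one you yourself flag and then defer: the extra relators $W$ are not only fourth powers and commutators of squares. Rewriting Table \ref{Relations of Z} over the Artin generators also produces the twisted relators from $\mathfrak{R}5$ (the words $[b_{i,j},b_{k,\ell}]\cdot(b_{i,k}^{2}b_{i,\ell}^{2}b_{j,k}^{2}b_{j,\ell}^{2})^{-1}$ in the linked cases) and from $\mathfrak{R}6$ (conjugation relators of the form $b_{i,j}b_{k,\ell}^{2}b_{i,j}^{-1}\cdot(b_{\sigma_{i,j}(k),\sigma_{i,j}(\ell)}^{2})^{-1}$, which in $B_{n}$ differ from the identity by commutators $[u,b^{2}]$ with $u$ a pure braid that is \emph{not} a band generator --- so your three-type classification of band-arc pairs does not reach them). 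Your proposal ends by ``expecting'' a case analysis to dispose of these, which is precisely the unproved step. There is, however, a clean way to close it that makes your geometric classification sufficient: let $N$ be the normal closure of the three families. Your easy direction gives $N\subseteq B_{n}[4]$, hence a surjection $B_{n}/N\to\Z_{n}$. Modulo $N$, the images of the standard pure braid generators $b_{i,j}^{2}$ pairwise commute (your three-type argument) and square to $1$ (conjugacy of $b_{i,j}^{4}$ to $b_{i}^{4}$), so the image of $PB_{n}$ in $B_{n}/N$ is elementary abelian of order at most $2^{\binom{n}{2}}$, giving $|B_{n}/N|\leq 2^{\binom{n}{2}}\cdot n!=|\Z_{n}|$; the surjection is then an isomorphism and $N=B_{n}[4]$, with no relator-by-relator verification of $\mathfrak{R}5$ and $\mathfrak{R}6$ needed. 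Without this (or an equivalent) final step, your proposal is an incomplete plan rather than a proof; with it, it is a correct argument genuinely different from, and in some respects more self-contained than, the paper's Tietze computation. One further point to nail down if you pursue the geometric route: you must verify that the specific arcs underlying the band generators realize the claimed configuration types (e.g.\ that for every linked index pattern the two arcs intersect essentially once, so that all linked pairs really do lie in a single $B_{n}$-orbit).
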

\paragraph{Outline}
This paper will begin with preliminaries which will help with constructing group presentations, group cohomology, and braid/symmetric groups. In section 3 we build a truncated resolution for the chain complex corresponding to the universal cover of a $\mathcal{K}(S_{n},1)$ space and define the maps needed to construct a 2-cocycle using this resolution. Then in section 4 we construct a representative for the cohomology class $[\phi]\in H^{2}(S_{n};\mathbb{Z}^{n\choose 2})$ and compute the order of this element. In section 5 we prove $\kappa$ is determined by $\phi$, finishing the proof of Theorems \ref{main theorem} and \ref{def of cocycle}. The proof of Theorem \ref{normal gen set for B_{n}} is finished at the end of the paper with an explanation of the difficulties which arise in finding a finite generating set for $B_{n}[4]$.
\subsection{Acknowledgments}
First I would like to give my deepest gratitude to my advisor Matthew Day for his continuous guidance, suggestions, and comments throughout this research and early drafts of this paper. I would also like to thank Dan Margalit and Matthew Clay for helpful conversations. Many thanks also to the anonymous referee for their suggestions and comments.
\section{Preliminaries}
\subsection{Building a Group Presentation}\label{section on building presentations}
Given a group extension
$$\begin{tikzcd}
	1\arrow{r}&K\arrow{r}{\iota}&G\arrow{r}{\pi}&Q\arrow{r}&1
\end{tikzcd}$$
and group presentations for $K$ and $Q$, we describe the process to build a group presentation for $G$. This construction is previously known and included for completeness. Let $\langle S_{K}\mid R_{K}\rangle$ and $\langle \bar{S}_{Q}\mid \bar{R}_{Q}\rangle$ be group presentations for $K$ and $Q$ respectively. Note that, without loss of generality, we may assume $1_{Q}\in \bar{S}_{Q}$. For any set $S$, let $F(S)$ denote the free group generated by $S$. Since we consider $K$ as a subgroup of $G$ we will not distinguish between $K$ and $\iota(K)$.
\paragraph{Generators:}
Since $Q=G/K$, we may lift any element $\bar{g}\in Q$ to an element $g\in G$ by choosing a $g$ such that $\pi(g)=\bar{g}$. Fix $S_{Q}\subset G$ as choice of lifts from elements in $\bar{S}_{Q}$ such that $\pi$ restricted to $S_{Q}$ is a bijection. Note that if $1_{Q}\in\bar{S}_{Q}$, we may choose the lift of $1_{Q}$ to be $1_{G}$. Therefore we may assume $1_{Q}\notin\bar{S}_{Q}$. Define $\mathcal{S}=S_{K}\cup S_{Q}$ as the generators of $G$.
\paragraph{Relations:}
Since $K\approx \langle S_{K}\mid R_{K}\rangle$, any element $k\in K$ can be represented by a word $k_{1}\cdots k_{q}\in F(S_{K})$ for some $q\geq 1$ and $k_{i}\in S_{K}$ for all $i$. Furthermore, since $K$ is normal in $G$, both $sks^{-1}$ and $s^{-1}ks$ are in $K$ for any $s\in S_{Q}$ and $k\in K$; the choice of $s\in S_{Q}$ determines element $k',k''\in K$ such that $sks^{-1}=k'$ and $s^{-1}ks=k''$. For each $s\in S_{Q}$ and $k\in S_{K}$, fix a choice of words $x_{k}$ and $y_{k}$ in $F(S_{K})$ representing $k'$ and $k''$ respectively. For all $s\in S_{Q}$ and $k\in S_{K}$, let $R'$ be the relations in $G$ defined by $sks^{-1}=x_{k}$ and $s^{-1}ks=y_{k}$. 
	
Now, any relation in $\bar{R}_{Q}$ is given by $\bar{s}_{1}\cdots\bar{s}_{q}=1_{Q}$ where $q\geq 1$ and $\bar{s}_{i}\in \bar{S}_{Q}$ for all $1\leq i\leq q$. Since, for each $i$, $s_{i}\in S_{Q}$ is defined to be a lift of $\bar{s}_{i}$ and $G$ is an extension of $Q$ by $K$, there exists some $k\in K$ such that $s_{1}\cdots s_{q}=k$. Furthermore, the choices of $s_{i}$ as a lift of $\bar{s}_{i}$ uniquely determines $k\in K$. Therefore, $k$ is expressed as a word $k_{1}\cdots k_{t}$ on the generators of $S_{K}$, for some $t\geq 1$. Hence $s_{1}\cdots s_{q}=k_{1}\cdots k_{t}$ determines the relation $s_{1}\cdots s_{q}k_{t}^{-1}\cdots k_{1}^{-1}=1_{G}$ in $G$. Let $R_{Q}$ be the set of all relations in $G$ determined by lifting relations of $\bar{R}_{Q}$ using this method. Define $R=R_{K}\cup R_{Q}\cup R'$ and $G'=\langle \mathcal{S}\mid R\rangle$.
	
\paragraph{Isomorphism} Given the definition for $G'$ above, it remains to show $G'\approx G$. Consider the natural map $f:G'\to G$ defined by $f(s)=s$ for all $s\in \mathcal{S}$, it suffices to show there exists homomorphisms $\iota'$ and $\pi'$ such that following diagram has exact rows and commutes:
\begin{equation}\label{commuting diagram for presentation}
	\begin{tikzcd}
	1\arrow{r}&K\arrow{r}{\iota'}\arrow{d}{\id_{K}}&G'\arrow{r}{\pi'}\arrow{d}{f}&Q\arrow{r}\arrow{d}{\id_{Q}}&1\\
	1\arrow{r}&K\arrow{r}{\iota}&G\arrow{r}{\pi}&Q\arrow{r}&1
	\end{tikzcd}
\end{equation}
\begin{lemma}\label{lemma: iota' is injective}
	Let $G$ be the extension of $Q$ by $K$ above and $G'=\langle \mathcal{S}\mid R\rangle$ be the group described in the preceding paragraphs. There exists $\iota':K\to G$ such that $\iota'$ is injective and commutes with \eqref{commuting diagram for presentation}. 
\end{lemma}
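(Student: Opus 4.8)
The plan is to construct $\iota'$ directly from the presentation of $K$ and then deduce injectivity from the commutativity of the left square, rather than attempting to prove injectivity by analyzing words in $G'$ directly. First I would build $\iota'$ via the universal property of the presentation $K \approx \langle S_{K}\mid R_{K}\rangle = F(S_{K})/N(R_{K})$, where $N(R_{K})$ denotes the normal closure of $R_{K}$ in $F(S_{K})$. Since $S_{K}\subseteq\mathcal{S}$ by construction, the assignment sending each generator of $S_{K}$ to the corresponding generator of $G'$ extends to a homomorphism $F(S_{K})\to G'$. The one point requiring verification is well-definedness: because $R_{K}\subseteq R$, every relator of $K$ already holds in $G'$, so this homomorphism kills $N(R_{K})$ and descends to a homomorphism $\iota':K\to G'$.

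Next I would check that the left square of \eqref{commuting diagram for presentation} commutes, that is, $f\circ\iota'=\iota$. Both $f\circ\iota'$ and $\iota$ are homomorphisms out of $K$, so it suffices to compare them on the generating set $S_{K}$. For $s\in S_{K}$ we have $\iota'(s)=s$ as an element of $G'$ and $f(s)=s$ as an element of $G$, while $\iota(s)=s$ under the identification of $K$ with $\iota(K)\leq G$; hence $f(\iota'(s))=\iota(s)$ on generators and the square commutes. With commutativity in hand, injectivity of $\iota'$ is then automatic: since $f\circ\iota'=\iota$ and $\iota$ is injective (being the inclusion of the normal subgroup $K$ into $G$), any $k\in\ker\iota'$ satisfies $\iota(k)=f(\iota'(k))=f(1)=1$, forcing $k=1$.

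I do not expect any genuine obstacle here, and the value of the argument lies precisely in sidestepping a direct injectivity computation. The only step demanding care is the well-definedness of $\iota'$, which rests entirely on the inclusion $R_{K}\subseteq R$ guaranteed by the construction of $R$; once $\iota'$ exists and the left square commutes, injectivity follows formally from the injectivity of the original inclusion $\iota$, with no need to understand how the relations $R_{Q}$ and $R'$ interact with words in the generators $S_{K}$.
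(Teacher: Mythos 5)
Your proof is correct and takes essentially the same approach as the paper: define $\iota'$ on the generators $S_{K}$, use the inclusion $R_{K}\subseteq R$ for well-definedness, and verify $f\circ\iota'=\iota$ on generators. The one difference is in your favor --- you spell out the formal deduction of injectivity from $f\circ\iota'=\iota$ and the injectivity of $\iota$, a step the paper's proof leaves implicit after establishing commutativity.
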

\begin{proof}
	Recall $\langle S_{K}\mid R_{K}\rangle$, $\langle \bar{S}_{Q}\mid \bar{R}_{Q}\rangle$ and $\langle \mathcal{S}\mid R\rangle$ are group presentations for $K,Q$ and $G'$ respectively. Since $K\approx\langle S_{K}\mid R_{K}\rangle$, any element $k\in K$ can be written as $k_{1}\cdots k_{t}$ where $k_{i}\in S_{K}$ for all $i$. Define $\iota':K\to G'$ by $\iota'(k)=k_{1}\cdots k_{t}$. Since $R_{K}\subset R$, $\iota'$ is a well defined homomorphism. Furthermore, since $f:G'\to G$ defined above is natural, $f\circ\iota'(k)=\iota\circ\id_{K}(k)$ for all $k\in K$. 
\end{proof}
\begin{lemma}\label{lemma: rewritting}
	Let $K,G',Q$ be groups with presentations be defined as above. Any element $g\in G'$ can be written as $g=w_{1}w_{2}$ where $w_{1}$ and $w_{2}$ are words in the generators of $Q$ and $K$ respectively.
\end{lemma}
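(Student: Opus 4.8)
The plan is to prove this by a \emph{collecting} argument: the relations in $R'$ are precisely what is needed to transport a generator of $K$ rightward past a generator of $Q$, so by repeatedly applying them one gathers all the $Q$-letters on the left and all the $K$-letters on the right. Throughout I call an element of $S_{K}$ or its inverse a $K$-letter, an element of $S_{Q}$ or its inverse a $Q$-letter, and a word on $S_{K}$ a $K$-word.

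First I would record the four basic moves obtained by rearranging the defining relations of $R'$. For $s\in S_{Q}$ and $k\in S_{K}$ the relation $sks^{-1}=x$ rearranges in $G'$ to $ks^{-1}=s^{-1}x$ and to $k^{-1}s^{-1}=s^{-1}x^{-1}$, while $s^{-1}ks=y$ rearranges to $ks=sy$ and to $k^{-1}s=sy^{-1}$. In every case the left-hand side is a single $K$-letter immediately followed by a single $Q$-letter, and the right-hand side is that same $Q$-letter followed by a $K$-word. Thus any adjacency of the shape (K-letter)(Q-letter) can be rewritten, using $R'$, as a (Q-letter)(K-word).

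Next I would upgrade this single-letter move to the statement that for any $K$-word $v$ and any $Q$-word $w$ one has $vw=wv'$ in $G'$ for some $K$-word $v'$. The natural route is a nested induction. First I would show that an entire $K$-word can be carried across a \emph{single} $Q$-letter, by induction on the length of the $K$-word: apply a basic move to the rightmost $K$-letter and push the remaining prefix across by the inductive hypothesis. Then I would show that a $K$-word can be carried across a whole $Q$-word, by induction on the length of the $Q$-word, peeling off one $Q$-letter at a time and invoking the single-$Q$-letter case. The key bookkeeping point is that each basic move neither creates nor destroys $Q$-letters, so the $Q$-portion can genuinely be collected to the left even though the $K$-portion may lengthen.

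Finally I would prove the lemma by induction on the length of a word representing $g\in G'$. Writing $g=\ell\cdot g'$ with $\ell$ its first letter, the inductive hypothesis gives $g'=w_{1}'w_{2}'$ with $w_{1}'$ a $Q$-word and $w_{2}'$ a $K$-word; if $\ell$ is a $Q$-letter I absorb it into $w_{1}'$, and if $\ell$ is a $K$-letter I commute it past $w_{1}'$ via the previous step, getting $\ell w_{1}'=w_{1}'u$ for a $K$-word $u$ and hence $g=w_{1}'(uw_{2}')$, of the required form. I expect the only real obstacle to be purely organizational: one must correctly treat the inverses of both $K$- and $Q$-generators (the four basic moves) and set up the nested induction so that termination is manifest, since a single complexity measure is awkward precisely because the $K$-part can grow at each move.
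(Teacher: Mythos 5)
Your proposal is correct and takes essentially the same route as the paper: the paper likewise proves the lemma by induction on word length, using the relations of $R'$ to shuttle letters of $S_{K}$ past letters of $S_{Q}$ (it peels off the \emph{last} letter and pushes a single $Q$-letter leftward through the $K$-word $w_{2}$, whereas you peel off the first letter and push a single $K$-letter rightward through the $Q$-word, organized as a nested induction). Your explicit listing of the four basic moves for inverse letters is in fact slightly more careful than the paper's proof, which tacitly treats only positive letters; this is a refinement of bookkeeping, not a different argument.
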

\begin{proof}
	Let $g\in G'$, then $g=s_{1}\cdots s_{t}$ where $s_{i}\in S$. By induction on $t$, suppose $g=s_{1}s_{2}$. In this case the statement is trivial unless $s_{1}\in S_{k}$ and $s_{2}\in S_{Q}$. By the relations of $R'$ (conjugating $S_{K}$ by elements of $S_{Q}$ and the inverses) there exists $k\in K$ such that $s_{2}^{-1}s_{1}s_{2}=k$. Consider
	\begin{align*}
		s_{1}s_{2}&=s_{2}s_{2}^{-1}s_{1}s_{2}\\
		&=s_{2}k
	\end{align*}
	Since $K$ is generated by $S_{K}$, $k$ is represented by a word $w_{2}\in F(S_{K})$ and hence the Lemma holds for $t=2$.
	
	Now, suppose the Lemma is true for any element of $G'$ that can be represented by a reduced word of length $t-1$ in the generators of $\mathcal{S}$. Suppose $g=s_{1}\cdots s_{t}$, then by induction $s_{1}\cdots s_{t-1}=w_{1}w_{2}$ where $w_{1}$ and $w_{2}$ are words in the generators of $S_{Q}$ and $S_{K}$ respectively. If $s_{t}\in S_{K}$, then we are done. Therefore assume $s_{t}\in S_{Q}$, then we have $g=w_{1}w_{2}s_{t}$. In particular, $w_{2}=k_{1}\cdots k_{\ell}$ for $k_{1},\ldots, k_{\ell}\in S_{K}$. By the relations of $R'$ there exists $h_{i}\in K$ such that $s_{t}^{-1}k_{i}s_{t}=h_{i}$ for every $i$ such that $1\leq i\leq \ell$. Thus we have:
	\begin{align*}
		s_{1}\cdots s_{t}&=w_{1}w_{2}s_{t}\\
		&=w_{1}k_{1}\cdots k_{\ell}s_{t}\\
		&=w_{1}k_{1}\cdots k_{\ell-1}s_{t}s_{t}^{-1}k_{\ell}s_{t}\\
		&=w_{1}k_{1}\cdots k_{\ell-2}s_{t}s_{t}^{-1}k_{\ell-1}s_{t}h_{\ell}\\
		&\hspace{.4em}\vdots\\
		&=w_{1}s_{t}h_{1}\cdots h_{\ell}
	\end{align*}
	Noting that each $h_{i}$ can be written as a word in $F(S_{K})$ for all $i$ proves the Lemma.
\end{proof}

\begin{lemma}\label{lemma: def of pi' and surjective}
	Let $K,G',Q$ with group presentations defined previously. There exists $\pi':G'\to Q$ such that \eqref{commuting diagram for presentation} commutes.
\end{lemma}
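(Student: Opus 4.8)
The cleanest route is to take $\pi':=\pi\circ f$. Since the relators in $R=R_{K}\cup R_{Q}\cup R'$ were, by construction, chosen so as to hold in $G$, the assignment $f(s)=s$ on $\mathcal{S}$ already defines a homomorphism $f:G'\to G$; composing with the homomorphism $\pi:G\to Q$ yields a homomorphism $\pi':G'\to Q$ with no additional verification required. The right-hand square of \eqref{commuting diagram for presentation} then commutes essentially by definition, since $\id_{Q}\circ\pi'=\pi'=\pi\circ f$, while the left-hand square commutes by Lemma \ref{lemma: iota' is injective}.

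To make $\pi'$ explicit (and to set up the later identification of $\ker\pi'$ when proving exactness of the top row), I would record its effect on generators: $\pi'(k)=1_{Q}$ for each $k\in S_{K}$ and $\pi'(s)=\bar{s}$ for each $s\in S_{Q}$. One can then re-derive well-definedness directly at the level of relators by confirming that every element of $R$ maps to $1_{Q}$. Any relator of $R_{K}$ is a word in $S_{K}$ alone, so its image is a product of copies of $1_{Q}$; the same observation handles the target words appearing in $R'$. A conjugation relator $sks^{-1}x^{-1}$ from $R'$ maps to $\bar{s}\,1_{Q}\,\bar{s}^{-1}\cdot 1_{Q}=1_{Q}$. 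Finally, a lifted relator $s_{1}\cdots s_{q}k_{t}^{-1}\cdots k_{1}^{-1}$ of $R_{Q}$ maps to $\bar{s}_{1}\cdots\bar{s}_{q}=1_{Q}$, because the correction factors $k_{i}$ lie in $K=\ker\pi$ and hence vanish in $Q$, while the leading product $\bar{s}_{1}\cdots\bar{s}_{q}$ was a defining relation of $Q$.

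Surjectivity is then immediate, since $\pi'(S_{Q})=\bar{S}_{Q}$ generates $Q$. I expect no serious obstacle in this argument; the only step demanding genuine care is the $R_{Q}$ family, where one must invoke precisely the property that each lifted relator differs from its $Q$-word only by an element of $\ker\pi$, so that the correction word contributes trivially after applying $\pi'$. Together with Lemma \ref{lemma: iota' is injective}, this establishes commutativity of both vertical squares in \eqref{commuting diagram for presentation}, leaving only exactness of the top row to be verified separately in order to conclude the isomorphism $G'\approx G$.
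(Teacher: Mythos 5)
Your proof is correct, but your headline route differs from the paper's. The paper defines $\pi'$ directly on generators ($\pi'(s)=\bar{s}$ for $s\in S_{Q}$, $\pi'(k)=1$ for $k\in S_{K}$) and then does the relator-by-relator verification that $\pi'$ kills every element of $R=R_{K}\cup R_{Q}\cup R'$ --- exactly the computation you carry out in your second paragraph --- before checking commutativity of the diagram by comparing $\pi\circ f$ with $\pi'$ on an arbitrary word. Your primary suggestion, setting $\pi':=\pi\circ f$, shortcuts all of this: well-definedness is inherited from $f$ (which exists because every relator in $R$ was constructed to hold in $G$ --- the one substantive fact, which the paper also uses without comment when introducing the ``natural map'' $f$), commutativity of the right-hand square becomes definitional, and for $k\in S_{K}$ one gets $\pi(f(k))=\pi(\iota(k))=1$ by exactness of the bottom row, so the composite agrees with the paper's $\pi'$ on generators and is hence the same homomorphism. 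What the composition buys is brevity and a tautological commutativity check; what the paper's intrinsic definition buys is a description of $\pi'$ purely in terms of the presentation data, not referencing $G$ or $f$, which is what is actually used in Theorem \ref{thm: presentation is exact} to compute $\ker\pi'$ --- and you anticipate this correctly by recording the generator-level formula and reverifying it, so the downstream exactness argument goes through unchanged. There is no circularity in your route (the 5-Lemma argument only needs commutativity, not an independently defined $\pi'$), and your surjectivity argument via $\pi'(S_{Q})=\bar{S}_{Q}$ is the paper's as well.
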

\begin{proof}
	Let $K,G,Q$, and $G'$ be as above. Recall $G'=\langle \mathcal{S}\mid R\rangle$ where $\mathcal{S}=S_{Q}\cup S_{K}$ and $R=R'\cup R_{Q}\cup R_{K}$. Without loss of generality, assume $1_{Q}\notin \bar{S}_{Q}$. Define $\pi':G\to Q$ by the following:
	$$\pi'(s)=\begin{cases}
		\bar{s}\hspace{1em}&s\in S_{Q}\\
		1\hspace{1em}&s\in S_{K}
	\end{cases}$$
	We first show $\pi'$ is well defined by proving $\pi'$ respects the relations in $R$. If $r$ is a relation in $R_{K}$, then $r$ can be expressed as a word in $F(S_{K})$ and $\pi'(r)=1$. By construction any relation $r\in R_{Q}$ is $s_{1}s_{2}\cdots s_{q}k_{t}^{-1}k_{t-1}^{-1}\cdots k_{1}^{-1}=1$ where $s_{i}\in S_{Q}$ for all $1\leq i\leq q$ and $k_{j}\in S_{Q}$ for all $1\leq j\leq t$. Furthermore:
	\begin{align*}
		\pi'(s_{1}s_{2}\cdots s_{q}k_{t}^{-1}k_{t-1}^{-1}\cdots k_{1}^{-1})&=\pi'(s_{1})\pi'(s_{2})\cdots\pi'(s_{q})\pi'(k_{t})^{-1}\pi'(k_{t-1})^{-1}\cdots \pi'(k_{1})^{-1}\\
		&=\bar{s}_{1}\bar{s}_{2}\cdots\bar{s}_{q}
	\end{align*}
	By the choice of $S_{Q}$ as a lift of $\bar{S}_{Q}$, $\bar{s}_{1}\bar{s}_{2}\cdots \bar{s}_{q}=1$ is a relation in $Q$. 
	
	Now, suppose $r$ is a relation in $R'$. Then $r$ is of the form $sks^{-1}=k'$ for some $s\in S_{Q}$, $k\in S_{K}$, and $k'\in K$. Since $k'\in K$, $k'$ can be represented by a word $k_{1}k_{2}\cdots k_{t}$ in $F(S_{K})$, $\pi'(k')=1$. It remains to show $\pi'(sks^{-1})=1$:
	\begin{align*}
		\pi'(sks^{-1})&=\pi'(s)\pi'(k)\pi'(s^{-1})\\
		&=\bar{s}\cdot 1\cdot \bar{s}^{-1}\\
		&=1
	\end{align*}
	Thus $\pi'$ is a well defined homomorphism. Furthermore, by the definition of $\mathcal{S}$, $\pi'$ is a surjection. 
	
	It remains to show $\pi'$ commutes with \eqref{commuting diagram for presentation}. Let $g\in G'$, then $g$ can be represented by $s_{1}s_{2}\cdots s_{q}$ for some word in $F(\mathcal{S})$. Since we assume $1_{Q}\notin \bar{S}_{Q}$, notice that $\pi\circ f(g)=\bar{s}_{1}\bar{s}_{2}\cdots\bar{s}_{q}$ where $\bar{s}_{i}=1_{Q}$ if and only if $s_{i}\in S_{K}$. This is the definition of $\pi'$, so $\pi'$ commutes with \eqref{commuting diagram for presentation}.
\end{proof}
\begin{theorem}\label{thm: presentation is exact}
	Let $K,G'$ and $Q$ be as above with $\iota'$ and $\pi'$ as in the previous lemmas. Then the top row of \eqref{commuting diagram for presentation} is exact. 
\end{theorem}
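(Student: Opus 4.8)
The plan is to check the three exactness conditions for the top row $1\to K\xrightarrow{\iota'}G'\xrightarrow{\pi'}Q\to 1$ separately. Exactness at $K$ is exactly the injectivity of $\iota'$ from Lemma~\ref{lemma: iota' is injective}, and exactness at $Q$ is the surjectivity of $\pi'$ from Lemma~\ref{lemma: def of pi' and surjective}, so all the content lies in exactness at $G'$, i.e. $\ker\pi'=\im\iota'$. The inclusion $\im\iota'\subseteq\ker\pi'$ is immediate, since every element of $\im\iota'$ is a word in $S_{K}$ and $\pi'$ kills each such generator. I would also record at the outset that $\im\iota'=\langle S_{K}\rangle$ is normal in $G'$: it is visibly stable under conjugation by $S_{K}$, and the relations $R'$ force $sks^{-1},s^{-1}ks\in K$ for every $s\in S_{Q}$ and $k\in S_{K}$, so $\langle S_{K}\rangle$ is normalized by all of $\mathcal{S}$.

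For the reverse inclusion I would take $g\in\ker\pi'$ and use Lemma~\ref{lemma: rewritting} to write $g=w_{1}w_{2}$ with $w_{1}\in F(S_{Q})$ and $w_{2}\in F(S_{K})$. Then $w_{2}\in\im\iota'$ and $\pi'(w_{2})=1$, so $\pi'(w_{1})=1$; writing $w_{1}=s_{1}\cdots s_{q}$ with $s_{i}\in S_{Q}$, this means $\bar{s}_{1}\cdots\bar{s}_{q}=1$ in $Q$. It therefore suffices to prove that $w_{1}$ itself represents an element of $K$ in $G'$, since then $g=w_{1}w_{2}\in\im\iota'$.

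The step I expect to be the main obstacle is passing from ``$\bar{s}_{1}\cdots\bar{s}_{q}=1$ in $Q$'' to ``$s_{1}\cdots s_{q}\in K$ in $G'$.'' Because $Q=\langle\bar{S}_{Q}\mid\bar{R}_{Q}\rangle$, in the free group $F(\bar{S}_{Q})$ the word $\bar{s}_{1}\cdots\bar{s}_{q}$ equals a product $\prod_{j}v_{j}\bar{r}_{j}^{\,\epsilon_{j}}v_{j}^{-1}$ of conjugates of relators $\bar{r}_{j}\in\bar{R}_{Q}$. Applying the lift homomorphism $F(\bar{S}_{Q})\to G'$, $\bar{s}\mapsto s$, and using that by the very construction of $R_{Q}$ the lift of each relator $\bar{r}_{j}$ represents a fixed element $k_{j}\in K$, I obtain $s_{1}\cdots s_{q}=\prod_{j}\tilde{v}_{j}k_{j}^{\epsilon_{j}}\tilde{v}_{j}^{-1}$ with $\tilde{v}_{j}\in\langle S_{Q}\rangle$; by the normality of $K$ noted above every factor lies in $K$, hence so does their product, which is $w_{1}$. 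This yields $\ker\pi'\subseteq\im\iota'$ and completes exactness at $G'$. Equivalently, one can argue intrinsically by forming $G'/\langle S_{K}\rangle$: adjoining the relations $S_{K}=1$ to $R$ collapses $R_{K}$ and $R'$ and reduces the relations $R_{Q}$ to precisely $\bar{R}_{Q}$, identifying $G'/\langle S_{K}\rangle$ with $Q$ compatibly with $\pi'$, so that $\ker\pi'=\langle S_{K}\rangle=\im\iota'$.
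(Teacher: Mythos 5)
Your proposal is correct and follows essentially the same route as the paper's proof: after reducing to $\ker\pi'\subseteq\im\iota'$, both arguments invoke Lemma~\ref{lemma: rewritting} to write $g=w_{1}w_{2}$, express the trivial word $\bar{s}_{1}\cdots\bar{s}_{q}$ as a product of conjugates of relators in $F(\bar{S}_{Q})$, lift it to $G'$ via the relations $R_{Q}$, and use the conjugation relations $R'$ to conclude that $w_{1}$ lies in $K$ --- your up-front observation that $\langle S_{K}\rangle$ is normal in $G'$ is simply a cleaner packaging of the paper's step-by-step application of $R'$, and your explicit tracking of the exponents $\epsilon_{j}$ tidies a detail the paper leaves implicit. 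The alternative argument via $G'/\langle S_{K}\rangle\approx Q$ that you sketch at the end is a genuinely different (quotient/Tietze) route, but since it is offered only as an unneeded aside, the substance of your proof matches the paper's.
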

\begin{proof}
	By Lemma \ref{lemma: iota' is injective} and Lemma \ref{lemma: def of pi' and surjective}, it suffices to show $\im\iota'=\ker\pi'$. By the definition of $\pi'$, $\im\iota'\subseteq\ker\pi'$. It remains to show $\ker\pi'\subseteq\im\iota'$. Suppose $g\in\ker\pi'$, by lemma \ref{lemma: rewritting}: $g=w_{1}w_{2}$ for some words $w_{1}$ and $w_{2}$ on the generators of $S_{Q}$ and $S_{K}$ respectively. Furthermore, $w_{1}=s_{1}\cdots s_{q}$ where $s_{i}\in S_{Q}$ for all $i$. Since $w_{2}$ can be expressed as a word in $F(S_{K})$, $\pi'(g)=\bar{s}_{1}\bar{s}_{2}\cdots\bar{s}_{q}$ where $\bar{s}_{i}\in \bar{S}_{Q}$ for all $i$. But $g\in\ker\pi'$, so $\bar{s}_{1}\cdots\bar{s}_{q}=1$ is a relation of $Q$. Since $\bar{R}_{Q}$ normally generates all relations of $Q$ in $F(\bar{S}_{Q})$:
		$$\bar{s}_{1}\cdots \bar{s}_{q}=\bar{x}_{1}\bar{r}_{1}\bar{x}_{1}^{-1}\bar{x}_{2}\bar{r}_{2}\bar{x}_{2}^{-1}\cdots\bar{x}_{t}\bar{r}_{t}\bar{x}_{t}^{-1}$$
	where $\bar{x}_{i}\in F(\bar{S}_{Q})$ and $\bar{r}_{i}\in\bar{R}_{Q}$ for all $i$. For each $i$, $\bar{x}_{i}$ lifts to a word $x_{i}\in F(S_{Q})$ and $\bar{r}_{i}$ lifts to the relation $r_{i}=k_{i}$ where $k_{i}\in K$. Therefore $w_{1}$ is equivalent to the following in $G'$:
		$$x_{1}k_{1}x_{1}^{-1}x_{2}k_{2}x_{2}^{-1}\cdots x_{t}k_{t}x_{t}^{-1}$$
	Furthermore, for each $i$, $k_{i}$ can be represented by $k_{i,1}k_{i,2}\cdots k_{i,\ell_{i}}$ where each $k_{i,j}\in S_{K}$. Therefore, for each $i$, $x_{i}k_{i}x_{i}^{-1}$ can be represented by:
		$$x_{i}k_{i,1}x_{i}^{-1}x_{i}k_{i,2}x_{i}^{-1}\cdots x_{i}k_{i,\ell_{i}}x_{i}^{-1}$$
	Since each $x_{i}\in F(S_{Q})$, applying relations of $R'$ to $x_{i}k_{i,j}x_{i}^{-1}$ yields $x_{i}k_{i,j}x_{i}^{-1}=k_{i,j}'$ for some $k_{i,j}'\in K$. For each $i$ we get $k_{i}=k_{i,1}k_{i,2}\cdots k_{i,\ell_{i}}=k_{i}'$ for some $k_{i}'\in K$. Therefore $g=k_{1}'k_{2}'\cdots k_{t}'w_{2}$ where $k_{i}'$ and $w_{2}$ are words in $F(S_{K})$ for all $i$. Therefore $g$ can be expressed as a word in $F(S_{K})$ and $g\in K$. Thus $\ker\pi'\subseteq \im\iota'$ and the top row of \eqref{commuting diagram for presentation} is exact. 
\end{proof}
\begin{theorem}\label{thm: building a presentation}
	Let $K,G$, and $Q$ be defined above with $\mathcal{S}$ and $R$ as in the paragraphs on generators and relations. Then $\langle \mathcal{S}\mid R\rangle$ is a presentation for $G$.
\end{theorem}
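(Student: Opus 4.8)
The plan is to invoke the short five lemma on the commutative diagram \eqref{commuting diagram for presentation}, since all of the structural work has already been carried out in the preceding lemmas. First I would confirm that the natural map $f:G'\to G$ sending each generator $s\in\mathcal{S}$ to itself is a well-defined homomorphism. This amounts to checking that every relation in $R=R_{K}\cup R_{Q}\cup R'$ holds in $G$: the relations $R_{K}$ hold because $K$ is a subgroup of $G$; the relations $R_{Q}$ were obtained by rewriting each relation of $Q$ as an actual equality $s_{1}\cdots s_{q}=k_{1}\cdots k_{t}$ in $G$; and the relations $R'$ merely record genuine conjugation identities $sks^{-1}=k'$ valid in $G$. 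Hence $f$ respects all defining relations and descends to a homomorphism.

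Next I would assemble the diagram \eqref{commuting diagram for presentation} and verify that both rows are short exact and that the squares commute. Lemma \ref{lemma: iota' is injective} supplies an injective $\iota':K\to G'$ making the left square commute, and Lemma \ref{lemma: def of pi' and surjective} supplies a surjection $\pi':G'\to Q$ making the right square commute. Theorem \ref{thm: presentation is exact} establishes that the top row is exact, while the bottom row is exact by hypothesis, since $G$ is an extension of $Q$ by $K$. With the outer vertical maps equal to $\id_{K}$ and $\id_{Q}$, the diagram is a morphism of short exact sequences whose outer columns are isomorphisms.

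Finally, the short five lemma forces the middle map $f$ to be an isomorphism. Concretely, one runs the usual diagram chase: injectivity of $f$ follows from injectivity of $\id_{Q}$ together with exactness at $K$ and injectivity of $\id_{K}$, while surjectivity of $f$ follows from surjectivity of $\id_{K}$ and $\id_{Q}$ together with exactness at $G$. It follows that $f$ is an isomorphism, and therefore $\langle\mathcal{S}\mid R\rangle=G'\approx G$, which is exactly the claim that $\langle\mathcal{S}\mid R\rangle$ is a presentation for $G$.

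I expect the only genuinely delicate point to be the verification that $f$ is well-defined, i.e.\ that each relation placed in $R$ truly holds in $G$, because all of the harder content—the injectivity of $\iota'$, the rewriting Lemma \ref{lemma: rewritting} used to normalize elements of $G'$, and the exactness of the top row in Theorem \ref{thm: presentation is exact}—is already in hand. Once well-definedness of $f$ is settled, the short five lemma renders the conclusion essentially immediate, so the remaining argument is routine.
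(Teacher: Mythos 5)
Your proposal is correct and follows essentially the same route as the paper: both verify that $f$ commutes with the diagram \eqref{commuting diagram for presentation}, use Lemmas \ref{lemma: iota' is injective} and \ref{lemma: def of pi' and surjective} together with Theorem \ref{thm: presentation is exact} to get two exact rows with identity maps on $K$ and $Q$, and conclude by the five lemma. Your explicit check that $f$ respects each family of relations $R_{K}$, $R_{Q}$, $R'$ is a welcome elaboration of the paper's brief appeal to $f$ being the ``natural map,'' but it is not a different argument.
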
 
\begin{proof}
	Consider the diagram \eqref{commuting diagram for presentation}:
		$$\begin{tikzcd}
			1\arrow{r}&K\arrow{r}{\iota'}\arrow{d}{\id_{K}}&G'\arrow{r}{\pi'}\arrow{d}{f}&Q\arrow{r}\arrow{d}{\id_{Q}}&1\\
			1\arrow{r}&K\arrow{r}{\iota}&G\arrow{r}{\pi}&Q\arrow{r}&1
		\end{tikzcd}$$
	with $\iota'$ and $\pi'$ defined as in Lemmas \ref{lemma: iota' is injective} and \ref{lemma: def of pi' and surjective}. It suffices to show $f:G'\to G$ is an isomorphism. Since $G$ is an extension of $Q$ by $K$, the bottom row of \eqref{commuting diagram for presentation} is exact. By Theorem \ref{thm: presentation is exact}, the top row of \eqref{commuting diagram for presentation} is exact. Therefore both rows of \eqref{commuting diagram for presentation} are exact. Furthermore, $f$ is a natural map so $f$ makes the diagram commute. Thus $f$ is an isomorphism by the 5-Lemma. 
\end{proof}
	
\subsection{Group cohomology}\label{group cohomology}
In this section we will review the general group cohomology required to classify group extensions by 2-cocycles. We begin with constructing the normalized bar resolution, then give the definition of equivalent group extensions, and describe the construction of a cocycle from an extension. The results of this section are known and can be found more thoroughly in chapters I and IV of Brown's text \cite{Brown}.
\paragraph{Normalized standard resolution}
Let $G$ be a group and let $P_{t}$ be the free $\mathbb{Z}$ module generated by $t+1$ tuples $(p_{0},\ldots p_{t})$ where $p_{i}\in G$ for all $i$. $G$ acts on $(p_{0},\ldots, p_{n})$ by $p\cdot(p_{0},\ldots,p_{n})=(p\cdot p_{0},\ldots,p\cdot p_{t})$ for any $p\in G$. To construct a chain complex, we use the boundary operator $\partial_{t}^{P}:P_{t}\to P_{t-1}$ determined by $\partial_{t}^{P}=\sum\limits_{i=0}^{t}(-1)^{i}d_{i}$ where: $$d_{i}(p_{0},\ldots, p_{t})=(p_{0},\ldots, p_{i-1},p_{i+1},\ldots, p_{t})$$
As $\mathbb{Z}G$ modules, $P_{t}$ is freely generated by elements $(1,p_{1},\ldots,p_{t})$ which represent the $G$ orbits of the $t+1$ tuples where $p_{0}=1$. We use the following bar notation to represent elements of $P_{t}$ as $G$-orbits:
$[p_{1}\mid p_{2}\mid \ldots\mid p_{t}]=(1,p_{1},p_{1}p_{2},\ldots p_{1}p_{2}\cdots p_{t})$. The change of basis results in the following change to $d_{i}$ in the boundary operator:
	$$d_{i}[p_{1}\mid\cdots\mid p_{t}]=
	\begin{cases}
		p_{1}[p_{2}\mid\cdots\mid p_{t}]\hspace{1em}&i=0\\
		[p_{1}\mid\cdots\mid p_{i-1}\mid p_{i}p_{i+1}\mid p_{i+2}\mid\cdots\mid p_{t}]\hspace{1em}&0<i<t\\
		[p_{1}\mid\cdots\mid p_{t-1}]\hspace{1em}&i=t
	\end{cases}$$
In particular $\partial_{2}^{P}([p_{1}\mid p_{2}])=p_{1}[p_{2}]-[p_{1}p_{2}]+[p_{1}]$ and  $\partial_{1}^{P}([p_{1}])=(p_{1}-1)[\hspace{1em}]$. The augmentation map $\varepsilon_{P}:P_{0}\to\mathbb{Z}$ defined by $\varepsilon_{P}([\hspace{1em}])=1$ implies
	$$\begin{tikzcd}
		\mathcal{P}:&\cdots\arrow{r}{\partial_{3}^{P}}&P_{2}\arrow{r}{\partial_{2}^{P}}&P_{1}\arrow{r}{\partial_{1}^{P}}&P_{0}\arrow{r}{\varepsilon_{P}}&\mathbb{Z}\arrow{r}&0
	\end{tikzcd}$$
is a free resolution of $\mathbb{Z}$ over $\mathbb{Z}G$ modules. Now, let $D_{t}$ be the subcomplex of $P_{t}$ generated over $\mathbb{Z}G$ by elements $[p_{1}\mid\ldots\mid p_{t}]$ such that $p_{i}=1$ for some $i$. Then $\bar{P}_{t}=P_{t}/D_{t}$ with the maps of the chain complex $\mathcal{\bar{P}}$ induced from $\mathcal{P}$ defines a free resolution of $\mathbb{Z}$ over $\mathbb{Z}G$ modules, called the \textit{normalized standard resolution}.\par
Furthermore, two projective resolutions of $\mathbb{Z}$ over $\mathbb{Z}G$ are chain homotopy equivalent. For a proof of this fact the interested reader is referred to chapter one of Brown's text \cite{Brown}. 
\paragraph{Equivalent extensions}
Let $K$ be an abelian group and let $Q$ be a group which acts on $K$ by the map $\theta:Q\to Aut(K)$. An extension of $Q$ by $K$ giving rise to $\theta$ is a short exact sequence:
	$$\begin{tikzcd}
		1\arrow{r}&K\arrow{r}{\iota}&E\arrow{r}{\pi}&Q\arrow{r}&1
	\end{tikzcd}$$
with the following condition: for any $k\in K$ and $\tilde{g}\in E$ such that $\pi(\tilde{g})=g\in G$, then $\tilde{g}\iota(k)\tilde{g}^{-1}=\iota(\theta(g)(k))$. Two group extensions $E_{1}$ and $E_{2}$ are equivalent if there exists an isomorphism $\varphi:E_{1}\to E_{2}$ such that the diagram:
	$$\begin{tikzcd}
		&&E_{1}\arrow{dr}\arrow{dd}{\varphi}&&\\
		1\arrow{r}&K\arrow{ur}\arrow{dr}&&Q\arrow{r}&1\\
		&&E_{2}\arrow{ru}&&
	\end{tikzcd}$$
commutes. Let $\mathcal{E}(Q;K)$ denote the set of equivalence classes of these extensions.
\paragraph{Constructing 2-cocycles}
Suppose $K$ is an abelian group and $Q$ acts on $K$ by the action of $\theta$ as in the preceding paragraph. Consider the extension:
	$$\begin{tikzcd}
		0\arrow{r}&K\arrow{r}{\iota}&E\arrow{r}{\pi}&Q\arrow{r}&1
	\end{tikzcd}$$
A section, $s$, is a function $s:Q\to E$ such that $\pi\circ s=\id_{Q}$; furthermore $s$ is \textit{normalized} if $s(1_{Q})=1_{E}$. Since $s$ is a set theoretical function, $s(p_{1})s(p_{2})$ is not necessarily equal to $s(p_{1}p_{2})$ in $E$. Therefore we can measure the failure of $s$ to be a homomorphism by a function $\kappa\in\hom_{Q}(P_{2},K)$ such that $s(p_{1})s(p_{2})=\iota(\kappa([p_{1}\mid p_{2}]))s(p_{1}p_{2})$. Hence we have a formula:
	$$\kappa([p_{1}\mid p_{2}])=s(p_{1})s(p_{2})s(p_{1}p_{2})^{-1}$$
which determines a function corresponding to $E$ as an extension of $Q$ by $K$. A thorough explanation that $\kappa$ satisfies the cocycle condition can be found in chapter IV, section 3 of Brown \cite{Brown}.
\paragraph{Corresponding Extensions}
Let $\kappa$ be a representative for a cohomology class in $H^{2}(Q;K)$ determined by the normalized standard resolution. Suppose $Q$ acts on $K$ by the action $\theta$, define $E_{\kappa}$ to be the twisted semi-direct product $K\ltimes_{\kappa}Q$ with multiplication defined by:
$$(a,g)\cdot(b,h)=(a+g\cdot b+\kappa(g,h),gh)$$
Note that multiplication in $E_{\kappa}$ satisfies associativity since $\kappa$ is a cocycle (page 92 of \cite{Brown}). Thus $E_{\kappa}$ is a representative for the equivalence class of group extensions of $Q$ by $K$ corresponding to $[\kappa]$.

The following theorem by Eilenberg and MacLane provides the classification of group extensions by 2-cocycles constructed above \cite{E/M}.
\begin{theorem}[Eilenberg MacLane 1947]\label{Classification of groups by cocycles}
	Suppose $Q$ and $K$ are groups with $K$ abelian such that $Q$ acts on $K$ by $\theta$. There exists a bijection between $\mathcal{E}(Q,K)$ and $H^{2}(Q;K)$.
\end{theorem}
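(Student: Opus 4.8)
The plan is to establish the bijection by constructing explicit maps in both directions and verifying that they are mutually inverse, working with the normalized standard resolution $\mathcal{\bar{P}}$ so that $H^{2}(Q;K)$ is computed as the cohomology of $\hom_{Q}(\bar{P}_{\bullet},K)$. The map $\mathcal{E}(Q,K)\to H^{2}(Q;K)$ is essentially the construction of the preceding paragraph: given an extension $E$ with a chosen normalized section $s$, set $\kappa([p_{1}\mid p_{2}])=s(p_{1})s(p_{2})s(p_{1}p_{2})^{-1}$. First I would verify this element lies in $\iota(K)$ by applying $\pi$ and using $\pi\circ s=\id_{Q}$, so that $\kappa$ genuinely defines an element of $\hom_{Q}(P_{2},K)$; then I would check the cocycle condition $\kappa\circ\partial_{3}^{P}=0$, which unwinds to the identity $\theta(p_{1})\kappa([p_{2}\mid p_{3}])-\kappa([p_{1}p_{2}\mid p_{3}])+\kappa([p_{1}\mid p_{2}p_{3}])-\kappa([p_{1}\mid p_{2}])=0$ and follows from associativity of multiplication in $E$ together with the twisting prescribed by $\theta$.

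Next I would show this construction descends to a well-defined map on equivalence classes. On the coefficient side, since any two normalized sections satisfy $s'(p)=\iota(\lambda(p))s(p)$ for some normalized $\lambda:Q\to K$, a direct computation shows the two resulting cocycles differ by the coboundary $\lambda\circ\partial_{2}^{P}$, so the class $[\kappa]$ is independent of the choice of section. On the extension side, if $\varphi:E_{1}\to E_{2}$ is an equivalence, then transporting a section $s_{1}$ of $E_{1}$ to $s_{2}=\varphi\circ s_{1}$ yields a section of $E_{2}$ computing the same $\kappa$, because $\varphi$ commutes with both $\iota$ and $\pi$; hence equivalent extensions determine the same cohomology class.

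For the reverse direction I would, given a normalized $2$-cocycle $\kappa$, build an extension $E_{\kappa}$ on the set $K\times Q$ with the twisted product $(k_{1},q_{1})(k_{2},q_{2})=(k_{1}+\theta(q_{1})(k_{2})+\kappa([q_{1}\mid q_{2}]),\,q_{1}q_{2})$, together with $\iota(k)=(k,1)$ and $\pi(k,q)=q$. The main verification here, and the step I expect to be the principal obstacle, is associativity of this product: expanding both $((k_{1},q_{1})(k_{2},q_{2}))(k_{3},q_{3})$ and $(k_{1},q_{1})((k_{2},q_{2})(k_{3},q_{3}))$ and equating the $K$-coordinates reduces exactly to the $2$-cocycle identity, so the cocycle condition is both necessary and sufficient for $E_{\kappa}$ to be a group; normalization of $\kappa$ guarantees that $(0,1)$ is the identity and supplies inverses, and one checks directly that $E_{\kappa}$ gives rise to $\theta$.

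Finally I would check the two constructions are mutually inverse up to the relevant equivalences. Starting from a cocycle $\kappa$, the section $q\mapsto(0,q)$ of $E_{\kappa}$ recovers $\kappa$ on the nose, so the composite $H^{2}(Q;K)\to\mathcal{E}(Q,K)\to H^{2}(Q;K)$ is the identity. Conversely, starting from an extension $E$ with normalized section $s$, the map $(k,q)\mapsto\iota(k)s(q)$ is a bijection $E_{\kappa}\to E$ (since every element of $E$ factors uniquely) which one verifies to be a homomorphism commuting with $\iota$ and $\pi$, exhibiting $E$ as equivalent to $E_{\kappa}$; hence $\mathcal{E}(Q,K)\to H^{2}(Q;K)\to\mathcal{E}(Q,K)$ is the identity as well. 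Together these yield the asserted bijection. Throughout, the chain-homotopy invariance of projective resolutions noted earlier ensures that computing $H^{2}(Q;K)$ via the normalized standard resolution is legitimate and independent of the chosen resolution.
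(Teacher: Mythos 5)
Your proposal is correct, but be aware that the paper does not prove this statement at all: Theorem \ref{Classification of groups by cocycles} is quoted as a classical result of Eilenberg and MacLane, and the paper explicitly defers even the verification of the cocycle condition to chapter IV, section 3 of Brown's text. So there is no in-paper argument to compare against; what you have written is a faithful reconstruction of the standard factor-set proof found in that reference: extension-to-cocycle via a normalized section, well-definedness under change of section and under equivalence of extensions, the crossed product $E_{\kappa}$ on $K\times Q$ whose associativity is exactly the $2$-cocycle identity, and the mutual-inverse checks. Working with the normalized resolution is the right choice here, since normalization of $\kappa$ (needed for $(0,1)$ to be the identity of $E_{\kappa}$ and for inverses to exist) is automatic for cochains on $\bar{P}_{2}$, consistent with the paper's conventions. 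The one step you use but never verify is that the inverse assignment is well defined on cohomology classes: writing the composite $H^{2}(Q;K)\to\mathcal{E}(Q,K)\to H^{2}(Q;K)$ presupposes that cohomologous cocycles produce equivalent extensions. This is an omission rather than a failure, and it is dispatched in one line: if $\kappa'=\kappa+\lambda\circ\partial_{2}^{P}$ with $\lambda$ normalized, then $(k,q)\mapsto(k+\lambda(q),q)$ is an isomorphism $E_{\kappa}\to E_{\kappa'}$ commuting with $\iota$ and $\pi$; equivalently, your own change-of-section computation, read in reverse, shows that $E_{\kappa'}$ admits a normalized section computing $\kappa$. With that line added, both composites being the identity makes sense as stated, and together with your injectivity argument via the factorization $(k,q)\mapsto\iota(k)s(q)$ the bijection is complete.
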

The choice of section determines the representative of the cohomology class in $H^{2}(Q;K)$. Furthermore, if the extension, $E$, is split, then $E\approx K\rtimes_{\theta} Q$ corresponds to the cohomology class represented by the trivial cocycle. Changing the choice of projective resolution of $\mathbb{Z}$ over $\mathbb{Z}G$ yields a corresponding representative in an isomorphic cohomology group.
\subsection{Braid groups and symmetric groups}
\paragraph{Symmetric Group} The standard presentation for the symmetric group generated by adjacent transpositions is:
	\begin{equation}\label{symmetric group generated by adjacent transpositions}
		S_{n}=\left\langle \sigma_{1},\ldots,\sigma_{n-1}\middle\mid\hspace{1em}\begin{aligned}
			&\sigma_{i}^{2}=1,\hspace{.5em}[\sigma_{i},\sigma_{j}]=1\text{ if }|i-j|>1\hspace{1em}\\
			&\sigma_{i}\sigma_{i+1}\sigma_{i}=\sigma_{i+1}\sigma_{i}\sigma_{i+1}
		\end{aligned}\right\rangle
	\end{equation}
Let $\sigma_{i,j}$ represent the transposition which permutes $i$ and $j$. As an element of \eqref{symmetric group generated by adjacent transpositions} we take the convention: $$\sigma_{i,j}=\sigma_{i}\cdots\sigma_{j-2}\sigma_{j-1}\sigma_{j-2}^{-1}\cdots\sigma_{i}^{-1}$$
where $i<j$. Throughout this paper, we will use the following presentation for the symmetric group:
	\begin{equation}\label{symmetric group}
		S_{n}=\left\langle \{\sigma_{i,j}\}_{1\leq i<j\leq n}\middle|\hspace{1em}
		\begin{aligned}&\sigma_{i,j}^{2}=1, \hspace{1em} [\sigma_{i,j},\sigma_{k,\ell}]=1\text{ if }\{i,j\}\cap\{k,\ell\}=\emptyset\hspace{1em}\\
			&\sigma_{i,j}\sigma_{j,k}\sigma_{i,j}^{-1}=\sigma_{i,k}\text{ for all }i,j,k
		\end{aligned}\right\rangle
	\end{equation}
\paragraph{Braid groups}
Let $x_{1},\ldots, x_{n}$ be marked points in $\mathbb{C}$. Elements of $B_{n}$ can be represented by a collection of $n$ non-colliding paths $f_{i}:[0,1]\to\mathbb{C}\times [0,1]$ such that $f_{i}(0),f_{i}(1)\in\{x_{1},\ldots,x_{n}\}$ and $f_{i}(t)\in\mathbb{C}\times\{t\}$. Then the collection $\{f_{i}\}$ determines a permutation on $\{1,\ldots,n\}$. Throughout this paper we will represent elements of $B_{n}$ by strand diagrams representing the paths $f_{1}(t),\ldots,f_{n}(t)$. A positively oriented twist between strands $i$ and $i+1$, denoted by $b_{i}$, corresponds to a clockwise twist in the strand diagram, where the $(i+1)^{st}$ strand passes over the $i^{th}$ strand.
	
For $1\leq i<j\leq n$, we define the half twist between the $i$ and $j$ strands by:
	$$b_{i,j}=b_{i}\cdots b_{j-2}b_{j-1}b_{j-2}^{-1}\cdots b_{i}^{-1}$$
Under the strand diagrams, this is equivalent to pulling all strands between the $i^{th}$ and $j^{th}$ strands over the $i^{th}$ strand, half twist the $i^{th}$ and $j^{th}$ strands by pulling the $j^{th}$ strand over the $i^{th}$ strand, then pull all strands between $i$ and $j$ over the $j^{th}$ strand. This choice of $b_{i,j}$ yields the following presentation for $B_{n}$ which is equivalent to the Birman-Ko-Lee presentation \cite{BKL}:
	\begin{equation}\label{braid group}
		B_{n}\approx\left\langle \{b_{i,j}\}_{1\leq i<j\leq n}\middle| \begin{aligned}
			\hspace{.5em}&[b_{i,j},b_{k,\ell}]=1\text{ if }(j-k)(j-\ell)(i-k)(i-\ell)>0\\
			&b_{i,j}b_{j,k}b_{i,j}^{-1}=b_{i,k}\text{ if }i<j<k,\hspace{.25em}k<i<j,\hspace{.25em}j<k<i\\
			&b_{i,j}^{-1}b_{j,k}b_{i,j}=b_{i,k}\text{ if }j<i<k,\hspace{.25em}i<k<j,\hspace{.25em}k<j<i
		\end{aligned}\hspace{.5em}\right\rangle
	\end{equation}
Note that the third relation can be determined by the second relation. Conjugating the second relation by $b_{i,j}^{-1}$ and renaming the indices provides the third relation. However the third relation is included for clarity in later computations.
\paragraph{Pure braids}
As a subgroup of the braid group, elements of the pure braid group, $PB_{n}$, are braids in which the induced permutation on $\{1,\ldots,n\}$ is trivial. Considering strand diagrams, every strand begins and ends at the same point of $\mathbb{C}$. In terms of \eqref{braid group}, $PB_{n}$ is generated by all $b_{i,j}^{2}$. For any pure braid, we can define the winding number of the $i^{\text{th}}$ and $j^{\text{th}}$ strand to be the number of positively oriented full twists between those two strands.
\paragraph{Level $m$ braid group}
Let $\mathbb{Z}_{m}$ denote $\mathbb{Z}/m\mathbb{Z}$ and $B_{n}$ be the braid group. Evaluating the unreduced Burau representation (See Birman \cite{Birman}), at $t=-1$, and reducing$\mod m$ (for any $m\geq0$) yields the following map from $B_{n}$ to $GL_{n}(\mathbb{Z}_{m})$:
	$$\begin{tikzcd}
		B_{n}\arrow{r}{\rho}& GL_{n}(\mathbb{Z}[t,t^{-1}])\arrow{r}{t=-1}& GL_{n}(\mathbb{Z})\arrow{r}& GL_{n}(\mathbb{Z}_{m})
	\end{tikzcd}$$
The map $\rho:B_{n}\to GL_{n}(\mathbb{Z}[t,t^{-1}])$ is the unreduced Burau representation of the braid group defined on the generators of $\eqref{Artin braid}$ by:
	$$\rho(b_{i})\mapsto I_{i-1}\oplus \begin{pmatrix}1-t&t\\1&0\end{pmatrix}\oplus I_{n-i-1}$$
For $m\geq 0$, the \textit{level m braid group} $B_{n}[m]$ is defined as the kernel of this composition, which is a finite index subgroup of $B_{n}$. A thorough topological description of the Burau representation and alternative descriptions of $B_{n}[m]$ can be found in Brendle and Margalit's paper \cite{B/M}.

\begin{definition} The \textit{mod 4 braid group}, denoted $\Z_{n}$, is the quotient of the braid group by the level 4 congruence subgroup of the braid group, $B_{n}/B_{n}[4]$.
\end{definition}

For arbitrary $m$, little is known about the algebraic structure of $B_{n}[m]$. In the case $m=4$, Brendle and Margalit proved $B_{n}[4]=PB_{n}^{2}$, the subgroup of $PB_{n}$ generated by squares of all elements \cite{B/M}. The algebraic structure of quotients of level $m$ braid groups is better understood. Stylianakis proved that for each odd prime $p$, $B_{n}[p]/B_{n}[2p]\approx S_{n}$ \cite{Styl}. Appel, Bloomquist, Gravel, and Holden generalized Stylianakis' result to $B_{n}[\ell]/B_{n}[2\ell]\approx S_{n}$ for every odd, positive integer $\ell$ \cite{A/B/G/H}. In the same work they also proved the following Theorem, which yields greater context for the group presentation of $\Z_{n}$ we give \cite{A/B/G/H}:
\begin{theorem}[Appel, Bloomquist, Gravel, Holden]
	For each $n$ and each $\ell$ odd:
	$$B_{n}[\ell]/B_{n}[4\ell]\approx\Z_{n}$$
\end{theorem}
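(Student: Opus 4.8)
The statement is cited from Appel--Bloomquist--Gravel--Holden, so what follows is a sketch of how one would establish it directly. Throughout, write $\rho_{m}\colon B_{n}\to GL_{n}(\mathbb{Z}_{m})$ for the composite reduction modulo $m$ of the Burau representation evaluated at $t=-1$ described above, so that by definition $B_{n}[m]=\ker\rho_{m}$ and $\Z_{n}=B_{n}/B_{n}[4]\cong\rho_{4}(B_{n})$. The plan is to realize $B_{n}[\ell]/B_{n}[4\ell]$ as the image $\rho_{4}(B_{n}[\ell])$ and then to show that this image is all of $\rho_{4}(B_{n})$.

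First I would record the Chinese Remainder Theorem input. Since $\ell$ is odd, $\gcd(4,\ell)=1$, so $\mathbb{Z}_{4\ell}\cong\mathbb{Z}_{4}\times\mathbb{Z}_{\ell}$ induces an isomorphism $GL_{n}(\mathbb{Z}_{4\ell})\cong GL_{n}(\mathbb{Z}_{4})\times GL_{n}(\mathbb{Z}_{\ell})$ under which $\rho_{4\ell}$ corresponds to the pair $(\rho_{4},\rho_{\ell})$. Consequently
\begin{equation*}
	B_{n}[4\ell]=\ker\rho_{4\ell}=\ker\rho_{4}\cap\ker\rho_{\ell}=B_{n}[4]\cap B_{n}[\ell].
\end{equation*}
The restriction of $\rho_{4}$ to $B_{n}[\ell]$ therefore has kernel $B_{n}[\ell]\cap B_{n}[4]=B_{n}[4\ell]$, so the first isomorphism theorem gives $B_{n}[\ell]/B_{n}[4\ell]\cong\rho_{4}(B_{n}[\ell])$. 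It thus suffices to prove $\rho_{4}(B_{n}[\ell])=\rho_{4}(B_{n})$, the right-hand side being $\Z_{n}$.

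The heart of the argument, and the step I expect to be the main obstacle, is this surjectivity; everything else is formal. I would prove it one generator at a time using the Artin generators $b_{1},\ldots,b_{n-1}$ of \eqref{Artin braid}, reducing to the single nontrivial $2\times2$ block $M=\bigl(\begin{smallmatrix}2&-1\\1&0\end{smallmatrix}\bigr)$, the value of $\rho(b_{i})$ at $t=-1$. Writing $M=I+N$ with $N=\bigl(\begin{smallmatrix}1&-1\\1&-1\end{smallmatrix}\bigr)$, a direct check gives $N^{2}=0$, so $M^{k}=I+kN$ for all $k$. Reading this modulo $\ell$ shows $M$ has order exactly $\ell$ (since $kN\equiv0$ forces $\ell\mid k$), and reading it modulo $4$ shows $M$ has order exactly $4$ (since $2N\not\equiv0$ but $4N\equiv0\pmod4$). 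The first fact gives $b_{i}^{\ell}\in B_{n}[\ell]$ for every $i$. For the second, fix once and for all an integer $q$ with $\ell q\equiv1\pmod4$, which exists because $\ell$ is odd; then $b_{i}^{\ell q}\in B_{n}[\ell]$, while $\rho_{4}(b_{i}^{\ell q})=\rho_{4}(b_{i})^{\ell q}=\rho_{4}(b_{i})$ because $\rho_{4}(b_{i})$ has order $4$ and $\ell q\equiv1\pmod4$.

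Finally I would assemble the pieces. The subgroup $\rho_{4}(B_{n}[\ell])\le GL_{n}(\mathbb{Z}_{4})$ contains $\rho_{4}(b_{i})$ for each $i$, and these elements generate $\rho_{4}(B_{n})$ since the $b_{i}$ generate $B_{n}$; hence $\rho_{4}(B_{n}[\ell])=\rho_{4}(B_{n})\cong\Z_{n}$. Combined with the identification $B_{n}[\ell]/B_{n}[4\ell]\cong\rho_{4}(B_{n}[\ell])$ from the second paragraph, this yields $B_{n}[\ell]/B_{n}[4\ell]\cong\Z_{n}$. The only genuine content beyond the Chinese Remainder Theorem and the isomorphism theorems is the order computation for $M$ modulo $\ell$ and modulo $4$, which is what makes the surjectivity step work and which pins down why the prime $2$ (through the factor $4$) and the oddness of $\ell$ must interact as they do.
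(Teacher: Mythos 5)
Your proposal is correct, but there is nothing in the paper to compare it against: the paper states this result only as a citation to Appel--Bloomquist--Gravel--Holden and gives no proof of its own, using the theorem merely as context for the presentation of $\Z_{n}$. Your argument stands on its own and is complete. The two pillars both check out: (i) since $\gcd(4,\ell)=1$, the CRT identification $\mathbb{Z}_{4\ell}\cong\mathbb{Z}_{4}\times\mathbb{Z}_{\ell}$ gives $B_{n}[4\ell]=B_{n}[4]\cap B_{n}[\ell]$, whence $B_{n}[\ell]/B_{n}[4\ell]\cong\rho_{4}(B_{n}[\ell])$ by the first isomorphism theorem; and (ii) the nilpotency computation $M=I+N$, $N^{2}=0$, $M^{k}=I+kN$ correctly pins down that $\rho_{\ell}(b_{i})$ has order $\ell$ and $\rho_{4}(b_{i})$ has order $4$, so choosing $q$ with $\ell q\equiv1\pmod 4$ (possible exactly because $\ell$ is odd) puts $b_{i}^{\ell q}$ in $B_{n}[\ell]$ with $\rho_{4}(b_{i}^{\ell q})=\rho_{4}(b_{i})$, forcing $\rho_{4}(B_{n}[\ell])=\rho_{4}(B_{n})\cong B_{n}/B_{n}[4]=\Z_{n}$. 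This is equivalent to showing $B_{n}=B_{n}[\ell]\cdot B_{n}[4]$ and then invoking the second isomorphism theorem, which is the standard skeleton for results of this type (and is in the spirit of the cited source); your element-level verification via the block $M$ is self-contained and needs nothing about $B_{n}[4]=PB_{n}^{2}$. One cosmetic remark: you only need that the order of $\rho_{4}(b_{i})$ \emph{divides} $4$ and that the order of $\rho_{\ell}(b_{i})$ divides $\ell$, so the exactness claims, while true, are not load-bearing; and the degenerate case $\ell=1$ is handled automatically since then $B_{n}[\ell]=B_{n}$ and $q=1$ works.
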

\paragraph{Formalization of Theorem \ref{main theorem}:}Define $\mathcal{Z}_{n}=B_{n}/PB_{n}^{2}$ and $\mathcal{PZ}_{n}$ be the image of $PB_{n}$ in $\mathcal{Z}_{n}$. The standard surjection of $B_{n}$ onto the symmetric group $S_{n}$ yields the following non split group extension \cite{K/M}:
	$$1\to\mathcal{PZ}_{n}\to \mathcal{Z}_{n}\to S_{n}\to 1$$
Kordek and Margalit also proved $\mathcal{PZ}_{n}\approx\mathbb{Z}_{2}^{n\choose 2}$ where the action of $S_{n}$ on $\mathcal{PZ}_{n}$, represented by $\theta$, is induced by the action of $B_{n}$ on $PB_{n}$ \cite{K/M}. Let $[\kappa]\in H^{2}(S_{n};\mathcal{PZ}_{n})$ be the nontrivial cohomology class corresponding to this extension. Since $\mathbb{Z}^{n\choose 2}\approx H_{1}(PB_{n};\mathbb{Z})$ is the abelianization of $PB_{n}$ (page 252 of \cite{F/M}), consider the group extension:
	$$0\to H_{1}(PB_{n};\mathbb{Z})\to G_{n}\to S_{n}\to 1$$
where $G_{n}$ is the quotient of $B_{n}$ by the commutator subgroup of $PB_{n}$. Let $[\phi]\in H^{2}(S_{n};H_{1}(PB_{n};\mathbb{Z}))$ be the cohomology class corresponding to the extension of $S_{n}$ by $H_{1}(PB_{n};\mathbb{Z})$. In this paper we prove a representative of $[\phi]$ composed with the$\mod 2$ reduction of integers determines a representative for $[\kappa]$. Furthermore, we show $[\phi]$ is order $2$ in $H^{2}(PB_{n};H_{1}(PB_{n};\mathbb{Z}))$. 
	
\section{The universal cover of a $\mathcal{K}(S_{n},1)$-space}
In this section we construct a truncated resolution of $\mathbb{Z}$ over $\mathbb{Z}S_{n}$ corresponding to the universal cover of a $\mathcal{K}(S_{n},1)$-space. We approximate the cellular chain complex in dimensions 0,1 and 2 for the universal cover, $\tilde{X}$, of the 2-skeleton for the Cayley complex of $S_{n}$. 
Let $\mathcal{R}$ be the chain complex of $\tilde{X}$ and recall $\mathcal{P}$ is the normalized bar resolution. Since each dimension of $\mathcal{R}$ is a free $S_{n}$ module and $\mathcal{P}$ is acyclic, for each $i$ there exists $\gamma_{i}:R_{i}\to P_{i}$ which fits into the following commutative diagram:
	$$\begin{tikzcd}
		\cdots\arrow{r}{\partial_{3}^{R}}&R_{2}\arrow{r}{\partial_{2}^{R}}\arrow{d}{\gamma_{2}}&R_{1}\arrow{r}{\partial_{1}^{R}}\arrow{d}{\gamma_{1}}&R_{0}\arrow{r}{\varepsilon_{R}}\arrow{d}{\gamma_{0}}&\mathbb{Z}\arrow{r}\arrow{d}{id}&0\\
		\cdots\arrow{r}{\partial_{3}^{P}}&\bar{P}_{2}\arrow{r}{\partial_{2}^{P}}&\bar{P}_{1}\arrow{r}{\partial_{1}^{P}}&\bar{P}_{0}\arrow{r}{\varepsilon_{P}}&\mathbb{Z}\arrow{r}&0
	\end{tikzcd}$$
The existence of $\gamma_{i}$ yields the following theorem:
\begin{theorem}\label{comp of cocycles}
	Let $0\to K\to E\to S_{n}\to 1$ be a group extension which corresponds to $[\kappa]\in H^{2}(S_{n};K)$ under the normalized standard resolution. There exists $\kappa'\in\hom(R_{2},K)$ such that $[\kappa']=[\kappa]$ in $H^{2}(S_{n};K)$ defined by:
		$$\kappa'=\kappa\circ\gamma_{2}$$
\end{theorem}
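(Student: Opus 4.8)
The plan is to invoke the fundamental lemma (comparison theorem) of homological algebra: cohomology may be computed from \emph{any} free resolution of $\mathbb{Z}$ over $\mathbb{Z}S_{n}$, and the canonical isomorphism between two such computations is induced by a chain map lifting $\id_{\mathbb{Z}}$. Both $\mathcal{R}$ and $\bar{\mathcal{P}}$ are free complexes resolving $\mathbb{Z}$ (through dimension two, which is all we need for $H^{2}$): the terms of $\mathcal{R}$ are free $S_{n}$-modules, $\tilde{X}$ is simply connected so $H_{0}(\mathcal{R})=\mathbb{Z}$ and $H_{1}(\mathcal{R})=0$, and the $3$-cells recorded by $\partial_{3}^{R}$ make $\mathcal{R}$ exact at $R_{2}$; while $\bar{\mathcal{P}}$ is the acyclic normalized resolution. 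The chain map $\gamma_{\bullet}\colon\mathcal{R}\to\bar{\mathcal{P}}$ supplied by the commutative diagram realizes one direction of the chain homotopy equivalence between these resolutions established earlier, and I would use it to transport the cocycle $\kappa$.

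First I would check that $\kappa'=\kappa\circ\gamma_{2}$ is genuinely a $2$-cocycle in $\hom_{S_{n}}(R_{\bullet},K)$. Writing $\delta$ for the coboundary (precomposition with the boundary map), commutativity of the diagram gives $\gamma_{2}\circ\partial_{3}^{R}=\partial_{3}^{P}\circ\gamma_{3}$, so
\begin{align*}
\delta\kappa'=\kappa'\circ\partial_{3}^{R}=\kappa\circ\gamma_{2}\circ\partial_{3}^{R}=\kappa\circ\partial_{3}^{P}\circ\gamma_{3}=(\delta\kappa)\circ\gamma_{3}=0,
\end{align*}
since $\kappa$ is a cocycle for the normalized resolution. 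Thus $\kappa'$ defines a class $[\kappa']\in H^{2}(\hom_{S_{n}}(R_{\bullet},K))$.

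Next I would identify $[\kappa']$ with $[\kappa]$. Precomposition with $\gamma_{\bullet}$ is a map of cochain complexes $\gamma^{\#}\colon\hom_{S_{n}}(\bar{P}_{\bullet},K)\to\hom_{S_{n}}(R_{\bullet},K)$ carrying $\kappa$ to $\kappa'$, hence inducing $\gamma^{*}[\kappa]=[\kappa']$ on cohomology. Because $\gamma_{\bullet}$ is a chain homotopy equivalence, with a homotopy inverse $\beta_{\bullet}\colon\bar{\mathcal{P}}\to\mathcal{R}$ produced by the same lifting argument (using exactness of $\mathcal{R}$ in the relevant range), the map $\gamma^{*}$ is an isomorphism. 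Moreover any two lifts of $\id_{\mathbb{Z}}$ are chain homotopic and homotopic cochain maps agree on cohomology, so $\gamma^{*}$ is the canonical identification of the two descriptions of $H^{2}(S_{n};K)$ and is independent of the chosen $\gamma_{\bullet}$. Under this identification $[\kappa']=\gamma^{*}[\kappa]$ is precisely $[\kappa]$, which is the asserted conclusion.

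The point requiring the most care is the exactness of $\mathcal{R}$ at $R_{2}$: passing to the universal cover of the $2$-skeleton automatically yields simple connectivity, hence exactness at $R_{0}$ and $R_{1}$, but $H_{2}$ of a bare $2$-complex need not vanish, so one must confirm that the $3$-cells recorded in $R_{3}$ kill all of $\ker\partial_{2}^{R}$. This is exactly what makes $\mathcal{R}$ a legitimate truncated resolution rather than merely the chain complex of a $2$-complex, and it is the content supplied by the construction in Section 3. Granting it, the homotopy equivalence and the isomorphism $\gamma^{*}$ follow formally, and the theorem reduces to the cochain-level computation displayed above.
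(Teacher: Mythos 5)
Your proposal is correct and takes essentially the same route as the paper: the paper deduces this theorem directly from the existence of the chain maps $\gamma_{i}$ lifting $\id_{\mathbb{Z}}$ (the comparison theorem for projective resolutions, with exactness of $\mathcal{R}$ coming from the fact that $X$ is a $\mathcal{K}(S_{n},1)$, so $\tilde{X}$ is contractible and not merely simply connected). Your additional verifications --- the cocycle check via $\gamma_{3}$ and the homotopy-inverse argument showing $\gamma^{*}$ is the canonical isomorphism --- simply spell out details the paper leaves implicit.
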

\paragraph{Cayley Complex}To describe the universal cover of a $\mathcal{K}(S_{n},1)$-space, we use the group presentation of the symmetric group generated by all possible transpositions given by \eqref{symmetric group}. Let $X$ be a $\mathcal{K}(S_{n},1)$-space which has the presentation complex as its 2-skeleton. Then $X$ has one 0-cell, a 1-cell for each generator of $S_{n}$, and a 2-cell for each relation in $S_{n}$. Let $x_{0}$ denote the 0-cell of $X$ and $x_{i,j}$ denote the 1-cell of $X$ corresponding to the generator $\sigma_{i,j}$ of $S_{n}$. Note that we only use positive generators to label 1-cells. Let $c_{i,j}$ be the two cell glued by the relation $\sigma_{i,j}^{2}=1$ and $d_{i,j,k,\ell}$ be the two cell glued by the relation $[\sigma_{i,j},\sigma_{k,\ell}]=1$ if $\{i,j\}\cap\{k,\ell\}=\emptyset$. 
	\begin{figure*}
		\caption{Gluing of the Cayley Complex}\label{figure:Gluing Maps}
		$$\begin{tikzpicture}
			\draw[-] (-1,-1) to (-1,1) to (1,1) to (1,-1) to (-1,-1);
			\draw[-] (4,-1) to (4,1) to (6,1) to (6,-1) to (4,-1);
			\node[circle, fill, scale=.25] at (-1,1){};
			\node[circle, fill, scale=.25] at (1,1){};
			\node[circle, fill, scale=.25] at (1,-1){};
			\node[circle, fill, scale=.25] at (-1,-1){};
			\node[circle, fill, scale=.25] at (4,1){};
			\node[circle, fill, scale=.25] at (6,1){};
			\node[circle, fill, scale=.25] at (4,-1){};
			\node[circle, fill, scale=.25] at (6,-1){};
			\draw[->] (-1,-1) to (0,-1);
			\draw[->] (1,-1) to (1,0);
			\draw[->] (-1,1) to (0,1);
			\draw[->] (-1,-1) to (-1,0);
			\draw[->] (4,-1) to (5,-1);
			\draw[->] (6,-1) to (6,0);
			\draw[->] (4,1) to (5,1);
			\draw[->] (4,-1) to (4,0);
			\node[below left, scale=.75] at (-1,-1){$\tilde{x}_{0}$};
			\node[below right, scale=.75] at (1,-1){$\sigma_{i,j}\tilde{x}_{0}$};
			\node[above right, scale=.75] at (1,1){$\sigma_{i,j}\sigma_{k,\ell}\tilde{x}_{0}$};
			\node[above left, scale=.75] at (-1,1){$\sigma_{k,\ell}\tilde{x}_{0}$};
			\node[below left, scale=.75] at (4,-1){$\tilde{x}_{0}$};
			\node[below right, scale=.75] at (6,-1){$\sigma_{i,j}\tilde{x}_{0}$};
			\node[above right, scale=.75] at (6,1){$\sigma_{i,j}\sigma_{j,k}\tilde{x}_{0}$};
			\node[above left, scale=.75] at (4,1){$\sigma_{i,k}\tilde{x}_{0}$};
			\node[below, scale=.75] at (0,-1.15){$\tilde{x}_{i,j}$};
			\node[right, scale=.75] at (1.15,0){$\sigma_{i,j}\tilde{x}_{k,\ell}$};
			\node[above, scale=.75] at (0,1.15){$\sigma_{k,\ell}\tilde{x}_{i,j}$};
			\node[left, scale=.75] at (-1.15,0){$\tilde{x}_{k,\ell}$};
			\node[scale=.75] at (0,0){$\tilde{d}_{i,j,k,\ell}$};
			\draw[->] (0,-.5) arc (-90:180:.5cm);
			\node[below, scale=.75] at (5,-1.15){$\tilde{x}_{i,j}$};
			\node[right, scale=.75] at (6.15,0){$\sigma_{i,j}\tilde{x}_{j,k}$};
			\node[above, scale=.75] at (5,1.15){$\sigma_{i,k}\tilde{x}_{i,j}$};
			\node[left, scale=.75] at (3.85,0){$\tilde{x}_{i,k}$};
			\node[scale=.75] at (5,0){$\tilde{e}_{i,j,k}$};
			\draw[->] (5,-.5) arc (-90:180:.5cm);
		\end{tikzpicture}$$
	\end{figure*}
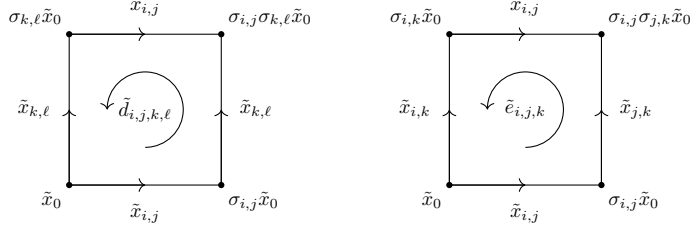
Consider the relation $\sigma_{i,j}\sigma_{j,k}\sigma_{i,j}^{-1}=\sigma_{i,k}$ from \eqref{symmetric group} as  $\sigma_{i,j}\sigma_{j,k}\sigma_{i,j}^{-1}\sigma_{i,k}^{-1}=1$; let $e_{i,k,j}$ be the 2-cell that glues along this relation in $X$. Note we have one 2-cell $e_{i,j,k}$ for each distinct, ordered triple in $\{1,2,\ldots,n\}$ while $c_{i,j}$ and $d_{i,j,k,\ell}$ assume $i<j$ and $k<\ell$.
	
Let $\tilde{X}$ be the universal cover of $X$ with the basepoint, $\tilde{x}_{0}$, chosen to be the lift of $x_{0}$. For each $t$-cell of $X$, there are $n!$ choices of lifts in $\tilde{X}$. Then the $0$-cells of $\tilde{X}$ are of the form $g\cdot \tilde{x}_{0}$ for $g\in S_{n}$ (with $g=1_{S_{n}}$ representing $\tilde{x}_{0}$) and $R_{0}$ is generated as a $\mathbb{Z}S_{n}$ module by $\tilde{x}_{0}$.

Now, lift $x_{i,j}$ to the $1$-cell, $\tilde{x}_{i,j}$, of $\tilde{X}$ which begins at $\tilde{x}_{0}$ and ends at $\sigma_{i,j}\cdot \tilde{x}_{0}$. Notice that for each $0$-cell, $g\cdot \tilde{x}_{0}$, there is a $1$-cell, denoted $g\cdot\tilde{x}_{i,j}$, beginning at $g\cdot\tilde{x}_{0}$ and ending at $g\sigma_{i,j}\cdot\tilde{x}_{0}$. Therefore $R_{1}$ is generated as a $\mathbb{Z}S_{n}$ module by the set of $\tilde{x}_{i,j}$ where $1\leq i<j\leq n$. Furthermore, the gluing map determines the differential $\partial_{1}^{R}:R_{1}\to R_{0}$ by $\partial_{1}^{R}(\tilde{x}_{i,j})=(\sigma_{i,j}-1)\tilde{x}_{0}$.

Let $\tilde{c}_{i,j}$ denote the lift of $c_{i,j}$ glued into $\tilde{X}$ along the loop which begins at $\tilde{x}_{0}$, follows $\tilde{x}_{i,j}$ to $\sigma_{i,j}\cdot \tilde{x}_{0}$, then follows $\sigma_{i,j}\cdot\tilde{x}_{i,j}$ back to $\tilde{x}_{0}$. Define $\tilde{d}_{i,j,k,\ell}$ to be the lift of $d_{i,j,k,\ell}$ glued into $\tilde{X}$ by the loop which starts at $\tilde{x}_{0}$, follows $\tilde{x}_{i,j}$ to $\sigma_{i,j}\cdot\tilde{x}_{0}$, then follows $\sigma_{i,j}\cdot\tilde{x}_{k,\ell}$ to $\sigma_{i,j}\sigma_{k,\ell}\cdot\tilde{x}_{0}$, then follows $\sigma_{k,\ell}\cdot\tilde{x}_{i,j}$ in reverse to $\sigma_{k,\ell}\cdot\tilde{x}_{0}$, and follows $\tilde{x}_{k,\ell}$ in reverse back to $\tilde{x}_{0}$. Furthermore, let $\tilde{e}_{i,j,k}$ denote the lift of $e_{i,j,k}$ glued into $\tilde{X}$ by the loop which begins at $\tilde{x}_{0}$, follows $\tilde{x}_{i,j}$ to $\sigma_{i,j}\cdot\tilde{x}_{0}$, then follows $\sigma_{i,j}\cdot\tilde{x}_{j,k}$ to $\sigma_{i,j}\sigma_{j,k}\cdot\tilde{x}_{0}$, then follows $\sigma_{i,k}\cdot\tilde{x}_{i,j}$ in reverse to $\sigma_{i,k}\cdot\tilde{x}_{0}$, and finally follows $\tilde{x}_{i,k}$ in reverse back to $\tilde{x}_{0}$. Notice the paths of $\tilde{d}_{i,j,k,\ell}$ and $\tilde{e}_{i,j,k}$ are given in Figure \ref{figure:Gluing Maps}.

Furthermore, for each $g\cdot\tilde{x}_{0}$, there is a lift of $c_{i,j}$, $d_{i,j,k,\ell}$, and $e_{i,j,k}$ glued into $\tilde{X}$ by a corresponding loop which begins and ends at $g\cdot\tilde{x}_{0}$, denoted $g\cdot\tilde{c}_{i,j}$, $g\cdot\tilde{d}_{i,j,k,\ell}$, and $g\cdot\tilde{e}_{i,j,k}$ respectively. Therefore, as a free $\mathbb{Z}S_{n}$ module, $R_{2}$ is generated by the set of all $\tilde{c}_{i,j}$, $\tilde{d}_{i,j,k,\ell}$, and $\tilde{e}_{i,j,k}$. To determine $\partial_{2}^{R}:R_{2}\to R_{1}$, we use the gluing of $2$-cells in $\tilde{X}$. Therefore $\partial_{2}^{R}(\tilde{c}_{i,j})=(\sigma_{i,j}+1)\tilde{x}_{i,j}$ and by Figure \ref{figure:Gluing Maps} we have: 
	\begin{align*}
		\partial_{2}^{R}(\tilde{d}_{i,j,k,\ell})=\tilde{x}_{i,j}+\sigma_{i,j}\cdot\tilde{x}_{k,\ell}-\sigma_{k,\ell}\cdot\tilde{x}_{i,j}-\tilde{x}_{k,\ell}\\
		\partial_{2}^{R}(\tilde{e}_{i,k,j})=\tilde{x}_{i,j}+\sigma_{i,j}\cdot\tilde{x}_{j,k}-\sigma_{i,k}\cdot\tilde{x}_{i,j}-\tilde{x}_{i,k}
	\end{align*}
Therefore we have proven the following lemma.
	\begin{lemma}\label{generators of cayley complex}
		As a $\mathbb{Z}S_{n}$ modules, $R_{1}$ is generated by $\{\tilde{x}_{i,j}\}$ and $R_{0}$ by $\tilde{x}_{0}$. Furthermore $R_{2}$ is generated by $\{\tilde{c}_{i,j}\}\cup\{\tilde{d}_{i,j,k,\ell}\}\cup\{\tilde{e}_{i,j,k}\}$ where $i<j$ and $k<\ell$.
	\end{lemma}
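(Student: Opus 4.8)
The plan is to invoke the standard correspondence between the cellular chain complex of a universal cover and the free $\mathbb{Z}G$-modules generated by the cells of the base. Since $X$ is connected and $\tilde{X}\to X$ is its universal cover, the fundamental group $\pi_1(X)\cong S_n$ acts on $\tilde{X}$ by deck transformations, and this action is free and properly discontinuous. First I would recall that, because the deck action permutes the lifts of each cell of $X$ simply transitively, every $t$-cell of $X$ contributes exactly one free $\mathbb{Z}S_n$-orbit of $t$-cells in $\tilde{X}$; choosing one lift per cell of $X$ then yields a free $\mathbb{Z}S_n$-basis of $R_t=C_t(\tilde{X})$. This is the only structural input needed, and it is exactly the fact used informally in the paragraphs preceding the lemma.

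With that principle in hand, the proof reduces to enumerating the cells of $X$ in dimensions $0,1,2$ and naming the chosen lifts. In dimension $0$, $X$ has the single vertex $x_0$, whose chosen lift $\tilde{x}_0$ therefore freely generates $R_0$. In dimension $1$, $X$ has one $1$-cell $x_{i,j}$ for each generator $\sigma_{i,j}$ of the presentation \eqref{symmetric group}, i.e. one for each pair $1\le i<j\le n$, so the lifts $\tilde{x}_{i,j}$ generate $R_1$. In dimension $2$, $X$ has one $2$-cell per relator: the squaring relators give the cells $c_{i,j}$ (with $i<j$), the disjoint-commuting relators give $d_{i,j,k,\ell}$ (with $i<j$, $k<\ell$, $\{i,j\}\cap\{k,\ell\}=\emptyset$), and the conjugation relators $\sigma_{i,j}\sigma_{j,k}\sigma_{i,j}^{-1}\sigma_{i,k}^{-1}$ give $e_{i,k,j}$. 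Their chosen lifts $\tilde{c}_{i,j}$, $\tilde{d}_{i,j,k,\ell}$, $\tilde{e}_{i,k,j}$ then generate $R_2$, which is the claimed statement.

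The one point that requires care, and which I would make explicit, is that the deck action on cells is genuinely free, so that each $R_t$ is free on the listed lifts rather than merely generated by them. This follows because the covering action is free on points of $\tilde{X}$: a nontrivial $g\in S_n$ fixes no point and hence no open cell, so the stabilizer of each lifted cell is trivial and the $n!$ lifts of a given cell of $X$ form a single free orbit, as noted in the passage before the lemma. I expect no genuine obstacle here; the lemma is essentially a bookkeeping statement organizing the cell structure of $\tilde{X}$ into the equivariant generators that the construction of $\gamma_2$ and the cocycle in Theorem \ref{comp of cocycles} will use. The only thing to double-check is that the indexing conventions (the ranges $i<j$ and $k<\ell$ for $c$ and $d$, versus unrestricted distinct triples for $e$) match the relators of \eqref{symmetric group} exactly, which is a direct comparison with the presentation.
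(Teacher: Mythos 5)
Your proposal is correct and follows essentially the same route as the paper: the paragraphs preceding the lemma argue exactly as you do, that the deck action of $S_{n}$ on $\tilde{X}$ carries the chosen lifts $\tilde{x}_{0}$, $\tilde{x}_{i,j}$, $\tilde{c}_{i,j}$, $\tilde{d}_{i,j,k,\ell}$, $\tilde{e}_{i,k,j}$ transitively onto all cells in each dimension, so these lifts generate $R_{0}$, $R_{1}$, $R_{2}$ as $\mathbb{Z}S_{n}$-modules. Your explicit remark that freeness of the deck action makes each $R_{t}$ \emph{free} on the listed lifts is a worthwhile clarification, but it is the same bookkeeping argument the paper gives.
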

By Lemma \ref{generators of cayley complex}, defining $\gamma_{0}$ and $\gamma_{1}$ by $\gamma_{0}(\tilde{x}_{0})=[\hspace{1em}]$ and $\gamma_{1}(\tilde{x}_{i,j})=[\sigma_{i,j}]$ implies $\gamma_{0}$ and $\gamma_{1}$ commute with the differential.
\begin{theorem}\label{thm: chain map}
	Define $\gamma_{2}:R_{2}\to \bar{P}_{2}$ by:
	\begin{align*}		
		\gamma_{2}(\tilde{c}_{i,j})&=[\sigma_{i,j}\mid\sigma_{i,j}]\\
		\gamma_{2}(\tilde{d}_{i,j,k,\ell})&=[\sigma_{i,j}\mid\sigma_{k,\ell}]-[\sigma_{k,\ell}\mid\sigma_{i,j}]\\
		\gamma_{2}(\tilde{e}_{i,k,j})&=[\sigma_{i,j}\mid \sigma_{j,k}]-[\sigma_{i,k}\mid \sigma_{i,j} ]
	\end{align*}
	Then $\gamma_{2}$ commutes with the differentials:
	$$\begin{tikzcd}
		R_{2}\arrow{r}{\partial_{2}^{R}}\arrow{d}{\gamma_{2}}&R_{1}\arrow{d}{\gamma_{1}}\arrow{r}{\partial_{1}^{R}}&R_{0}\arrow{d}{\gamma_{0}}\arrow{r}{\epsilon_{R}}&\mathbb{Z}\arrow{d}{\id}\arrow{r}&0\\
		\bar{P}_{2}\arrow{r}{\partial_{2}^{P}}&\bar{P}_{1}\arrow{r}{\partial_{1}}&\bar{P}_{0}\arrow{r}{\epsilon_{P}}&\mathbb{Z}\arrow{r}&0
	\end{tikzcd}$$
\end{theorem}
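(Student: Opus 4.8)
The plan is to reduce the theorem to a direct verification on the three $\mathbb{Z}S_{n}$-module generators of $R_{2}$. The paragraph preceding the theorem already records that $\gamma_{0}$ and $\gamma_{1}$ commute with the differentials (and with $\varepsilon_{R},\varepsilon_{P}$), so the only square of the diagram still in question is the leftmost one, i.e.\ the identity $\gamma_{1}\circ\partial_{2}^{R}=\partial_{2}^{P}\circ\gamma_{2}$. Because $R_{2}$ and $\bar{P}_{2}$ are free $\mathbb{Z}S_{n}$-modules and both boundary maps are $S_{n}$-equivariant, it suffices to test this equality on $\tilde{c}_{i,j}$, $\tilde{d}_{i,j,k,\ell}$, and $\tilde{e}_{i,k,j}$; $S_{n}$-equivariance then extends it to all of $R_{2}$. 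The two tools I would keep in hand throughout are the normalized bar boundary $\partial_{2}^{P}[p_{1}\mid p_{2}]=p_{1}[p_{2}]-[p_{1}p_{2}]+[p_{1}]$ together with the degeneracy rule $[1]=0$, and the data $\gamma_{1}(\tilde{x}_{i,j})=[\sigma_{i,j}]$ with the cellular boundaries $\partial_{2}^{R}$ displayed above.

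For each generator I would expand $\partial_{2}^{P}\gamma_{2}$ and compare it with $\gamma_{1}\partial_{2}^{R}$ term by term. For $\tilde{c}_{i,j}$, applying $\partial_{2}^{P}$ to $[\sigma_{i,j}\mid\sigma_{i,j}]$ gives $\sigma_{i,j}[\sigma_{i,j}]-[\sigma_{i,j}^{2}]+[\sigma_{i,j}]$; since $\sigma_{i,j}^{2}=1$ the middle term is $[1]=0$, leaving $(1+\sigma_{i,j})[\sigma_{i,j}]$, which is $\gamma_{1}$ of the boundary read off the attaching word $\sigma_{i,j}\sigma_{i,j}$ of the squaring cell. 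For $\tilde{d}_{i,j,k,\ell}$, expanding $\partial_{2}^{P}\bigl([\sigma_{i,j}\mid\sigma_{k,\ell}]-[\sigma_{k,\ell}\mid\sigma_{i,j}]\bigr)$ produces six terms; the two ``middle'' symbols $[\sigma_{i,j}\sigma_{k,\ell}]$ and $[\sigma_{k,\ell}\sigma_{i,j}]$ are equal because these transpositions have disjoint support and hence commute, so they cancel, leaving exactly $[\sigma_{i,j}]+\sigma_{i,j}[\sigma_{k,\ell}]-\sigma_{k,\ell}[\sigma_{i,j}]-[\sigma_{k,\ell}]=\gamma_{1}\partial_{2}^{R}(\tilde{d}_{i,j,k,\ell})$. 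The case $\tilde{e}_{i,k,j}$ runs identically, except that cancellation of the middle symbols $[\sigma_{i,j}\sigma_{j,k}]$ and $[\sigma_{i,k}\sigma_{i,j}]$ now uses precisely the relation $\sigma_{i,j}\sigma_{j,k}=\sigma_{i,k}\sigma_{i,j}$ from \eqref{symmetric group}, after which the remaining four terms assemble into $\gamma_{1}\partial_{2}^{R}(\tilde{e}_{i,k,j})$.

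The computation is routine, so the only genuine subtlety—and the step I would handle most carefully—is the identification of the middle bar-symbols $[p_{1}p_{2}]$ arising from the two halves of $\gamma_{2}(\tilde{d})$ and $\gamma_{2}(\tilde{e})$. They cancel only after rewriting the product inside the bracket by the defining relation of the corresponding $2$-cell: commutativity of disjoint transpositions for $\tilde{d}$, and the conjugation relation $\sigma_{i,j}\sigma_{j,k}\sigma_{i,j}^{-1}=\sigma_{i,k}$ for $\tilde{e}$. This is exactly the expected mechanism, since $\gamma_{2}$ was designed to send each $2$-cell to the difference of the two bar-symbols recording the two readings of its defining relation; the chain-map identity is the algebraic shadow of the fact that those two readings cobound the same cell. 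Establishing the three equalities, together with $S_{n}$-equivariance, completes the verification that $\gamma_{2}$ is a chain map.
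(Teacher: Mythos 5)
Your proposal is correct and takes essentially the same route as the paper's proof: reduce to the three $\mathbb{Z}S_{n}$-module generators by equivariance, expand $\partial_{2}^{P}\circ\gamma_{2}$ with the normalized bar boundary, and cancel the middle bar-symbols using $\sigma_{i,j}^{2}=1$ together with $[1]=0$, commutativity of disjoint transpositions, and $\sigma_{i,j}\sigma_{j,k}=\sigma_{i,k}\sigma_{i,j}$, respectively. Note also that your reading of the squaring cell's boundary as $(1+\sigma_{i,j})\tilde{x}_{i,j}$ agrees with the boundary actually used in the paper's computation (the formula $\partial_{2}^{R}(\tilde{c}_{i,j})=(\sigma_{i,j}-1)\tilde{x}_{i,j}$ displayed before the theorem appears to be a sign typo).
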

\begin{proof}
	$\gamma_{2}$ is a $\mathbb{Z}S_{n}$ module homomorphism defined on generators. Therefore it suffices to show $\gamma_{1}\circ\partial_{2}^{R}=\partial_{2}^{P}\circ\gamma_{2}$. Recall the generators of $R_{1}$ are $\tilde{x}_{i,j}$ and $\gamma_{1}(\tilde{x}_{i,j})=[\sigma_{i,j}]$. By following the gluing maps in Figure \ref{figure:Gluing Maps} we have: 
	\begin{align*}
		\gamma_{1}\circ\partial_{2}^{R}(\tilde{c}_{i,j})&=[\sigma_{i,j}]+\sigma_{i,j}[\sigma_{i,j}]\\
		\gamma_{1}\circ\partial_{2}^{R}(\tilde{d}_{i,j,k,\ell})&=[\sigma_{i,j}]+\sigma_{i,j}[\sigma_{k,\ell}]-\sigma_{k,\ell}[\sigma_{i,j}]-[\sigma_{k,\ell}]\\
		\gamma_{2}\circ\partial_{2}^{R}(\tilde{e}_{i,k,j})&=[\sigma_{i,j}]+\sigma_{i,j}[\sigma_{j,k}]-\sigma_{i,k}[\sigma_{i,j}]-[\sigma_{i,k}]
	\end{align*}
	Recall we are using the normalized bar resolution, therefore $[p_{1}\mid\cdots \mid p_{t}]=0$ if $p_{i}$ is trivial for any $i$. Now, since $\partial_{2}^{R}([p_{1}\mid p_{2}])=p_{1}[p_{2}]-[p_{1}\cdot p_{2}]+[p_{1}]$ for any $p_{1},p_{2}\in S_{n}$ and $\partial_{2}^{P}$ is a $\mathbb{Z}S_{n}$ module homomorphism, we have:
	\begin{align*}
			\partial_{2}^{P}([\sigma_{i,j}\mid\sigma_{i,j}])&=\sigma_{i,j}[\sigma_{i,j}]-[\sigma_{i,j}^{2}]+[\sigma_{i,j}]\\
			&=\sigma_{i,j}[\sigma_{i,j}]-[1]+[\sigma_{i,j}]\\
			&=[\sigma_{i,j}]+\sigma_{i,j}[\sigma_{i,j}]
	\end{align*}
	Thus $(\partial_{2}^{P}\circ\gamma_{2})(\tilde{c}_{i,j})=(\gamma_{1}\circ\partial_{2}^{R})(\tilde{c}_{i,j})$. To compute $\partial_{2}^{P}(\gamma_{2}(\tilde{d}_{i,j,k,\ell}))$, recall that $\{i,j\}\cap\{k,\ell\}=\emptyset$, therefore $\sigma_{i,j}\sigma_{k,\ell}=\sigma_{k,\ell}\sigma_{i,j}$:
	\begin{align*}
		\partial_{2}^{P}([\sigma_{i,j}\mid\sigma_{k,\ell}]-[\sigma_{k,\ell}\mid\sigma_{i,j}])&=\sigma_{i,j}[\sigma_{k,\ell}]-[\sigma_{i,j}\sigma_{k,\ell}]+[\sigma_{i,j}]-\sigma_{k,\ell}[\sigma_{i,j}]+[\sigma_{k,\ell}\sigma_{i,j}]-[\sigma_{k,\ell}]\\
		&=\sigma_{i,j}[\sigma_{k,\ell}]-[\sigma_{i,j}\sigma_{k,\ell}]+[\sigma_{i,j}]-\sigma_{k,\ell}[\sigma_{i,j}]+[\sigma_{i,j}\sigma_{k,\ell}]-[\sigma_{k,\ell}]\\
		&=\sigma_{i,j}[\sigma_{k,\ell}]+[\sigma_{i,j}]-\sigma_{k,\ell}[\sigma_{i,j}]-[\sigma_{k,\ell}]\\
	\end{align*}
	Finally, to compute $\partial_{2}^{P}(\tilde{e}_{i,k,j})$ recall that in $S_{n}$, $\sigma_{i,k}=\sigma_{i,j}\sigma_{j,k}\sigma_{i,j}^{-1}$ and therefore $\sigma_{i,k}\sigma_{i,j}=\sigma_{i,j}\sigma_{j,k}$. Hence we have:
	\begin{align*}
		\partial_{2}^{P}([\sigma_{i,j}\mid\sigma_{j,k}]-[\sigma_{i,k}\mid \sigma_{i,j}])&=\sigma_{i,j}[\sigma_{j,k}]-[\sigma_{i,j}\sigma_{j,k}]+[\sigma_{i,j}]-\sigma_{i,k}[\sigma_{i,j}]+[\sigma_{i,k}\sigma_{i,j}]-[\sigma_{i,k}]\\
		&=\sigma_{i,j}[\sigma_{j,k}]-[\sigma_{i,k}\sigma_{j,k}]+[\sigma_{i,j}]-\sigma_{i,k}[\sigma_{i,j}]+[\sigma_{i,j}\sigma_{j,k}]-[\sigma_{i,k}]\\
		&=\sigma_{i,j}[\sigma_{j,k}]+[\sigma_{i,j}]-\sigma_{i,k}[\sigma_{i,j}]-[\sigma_{i,k}]
	\end{align*}
\end{proof}
By Theorem \ref{thm: chain map}, we can use the construction of a 2-cocycle by the normalized bar resolution to define a 2-cocycle by the truncated resolution for the universal cover of a $\mathcal{K}(S_{n},1)$-space. 
\begin{theorem}\label{thm: definition of cohom class}
	Let $K$ be any $S_{n}$ module and let $E$ be an extension of $S_{n}$ by $K$. Suppose $\kappa\in\hom(\bar{P}_{2},K)$ is a representative for the cohomology class in $H^{2}(S_{n};K)$ corresponding to $E$ determined by the normalized bar resolution. Define $\kappa'\in\hom(R_{2},K)$ by:
	\begin{align*}
		\kappa'(\tilde{c}_{i,j})&=s(\sigma_{i,j})(\sigma_{i,j})\\
		\kappa'(\tilde{d}_{i,j,k,\ell})&=s(\sigma_{i,j})s(\sigma_{k,\ell})s(\sigma_{i,j}\sigma_{k,\ell})^{-1}-s(\sigma_{k,\ell})s(\sigma_{i,j})s(\sigma_{k,\ell}\sigma_{i,j})^{-1}\\
		\kappa'(\tilde{e}_{i,k,j})&=s(\sigma_{i,j})s(\sigma_{j,k})s(\sigma_{i,j}\sigma_{j,k})^{-1}-s(\sigma_{i,k})s(\sigma_{i,j})s(\sigma_{i,k}\sigma_{i,j})^{-1}
	\end{align*}
	Then $\kappa'$ is the 2-cocylce determined by the resolution corresponding to $\tilde{X}$ such that $[\kappa']$ and $[\kappa]$ represent the same group extension.
\end{theorem}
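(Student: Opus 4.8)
The plan is to realize $\kappa'$ as the composite $\kappa\circ\gamma_2$ and then simply unwind the two explicit formulas already in hand. By Theorem \ref{comp of cocycles}, the element $\kappa'=\kappa\circ\gamma_2\in\hom(R_2,K)$ has the same class in $H^2(S_n;K)$ as $\kappa$; combined with the Eilenberg--MacLane classification (Theorem \ref{Classification of groups by cocycles}), equal cohomology classes correspond to equivalent extensions, so $[\kappa']$ and $[\kappa]$ represent the same extension $E$. Thus the only remaining task is to evaluate $\kappa\circ\gamma_2$ on the three $\mathbb{Z}S_n$-module generators $\tilde c_{i,j}$, $\tilde d_{i,j,k,\ell}$, $\tilde e_{i,k,j}$ of $R_2$ and check that the output matches the displayed formulas.

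First I would substitute the values of $\gamma_2$ supplied by Theorem \ref{thm: chain map}. Since $\kappa$ is additive (it is a $\mathbb{Z}S_n$-module homomorphism), it distributes over the formal $\mathbb{Z}$-linear combinations appearing in $\gamma_2(\tilde d_{i,j,k,\ell})$ and $\gamma_2(\tilde e_{i,k,j})$. Concretely, this gives
\[
\kappa'(\tilde d_{i,j,k,\ell})=\kappa([\sigma_{i,j}\mid\sigma_{k,\ell}])-\kappa([\sigma_{k,\ell}\mid\sigma_{i,j}]),
\]
and the analogous two-term expression for $\tilde e_{i,k,j}$, while $\kappa'(\tilde c_{i,j})=\kappa([\sigma_{i,j}\mid\sigma_{i,j}])$.

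Next I would apply the defining cocycle formula $\kappa([p_1\mid p_2])=s(p_1)s(p_2)s(p_1p_2)^{-1}$ to each bar symbol, identifying $\iota(K)$ with $K$. For $\tilde d$ and $\tilde e$ this is immediate and reproduces the two displayed expressions verbatim. For $\tilde c_{i,j}$ one extra simplification is required: since $\sigma_{i,j}^2=1$ in $S_n$ and the section $s$ is normalized, so that $s(1_{S_n})=1_E$, the term $s(\sigma_{i,j}^2)^{-1}=s(1)^{-1}$ collapses, leaving the product $s(\sigma_{i,j})s(\sigma_{i,j})$ as claimed.

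There is no deep obstacle here; the statement is essentially a bookkeeping corollary of Theorems \ref{comp of cocycles} and \ref{thm: chain map}. The one point that warrants care is legitimizing the distribution of $\kappa$ over the formal differences in the image of $\gamma_2$: this rests on $\kappa$ being a genuine homomorphism of abelian groups, together with the observation that every bar symbol produced by $\gamma_2$ is non-degenerate, since each entry is a transposition and hence nontrivial in $S_n$. Consequently no normalization ambiguity arises when evaluating $\kappa$, and the identification of the element $s(p_1)s(p_2)s(p_1p_2)^{-1}\in\ker\pi$ with a well-defined element of $K$ is unambiguous.
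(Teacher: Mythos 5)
Your proposal is correct and follows essentially the same route as the paper: compose $\kappa$ with the chain map $\gamma_{2}$ from Theorem \ref{thm: chain map}, invoke Theorem \ref{comp of cocycles} to get $[\kappa']=[\kappa]$, and evaluate on the three generators of $R_{2}$ via the formula $\kappa([p_{1}\mid p_{2}])=s(p_{1})s(p_{2})s(p_{1}p_{2})^{-1}$. Your only departure is that you make explicit two details the paper leaves implicit --- that $s(\sigma_{i,j}^{2})^{-1}=s(1)^{-1}$ vanishes because the section is normalized, and that the bar symbols in the image of $\gamma_{2}$ are non-degenerate --- which is a welcome tightening rather than a different argument.
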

\begin{proof}
	Recall that $\bar{\mathcal{P}}$ is the normalized bar resolution and $\mathcal{R}$ is the resolution corresponding to $\tilde{X}$. Evaluating $\kappa\circ\gamma$ on the generators of $R_{2}$ we get:
	\begin{align*}
		\kappa([\sigma_{i,j}\mid\sigma_{i,j}])&=s(\sigma_{i,j})s(\sigma_{i,j})\\
		\kappa([\sigma_{i,j}\mid\sigma_{k,\ell}]-[\sigma_{k,\ell}\mid\sigma_{i,j}])&=s(\sigma_{i,j})s(\sigma_{k,\ell})s(\sigma_{i,j}\sigma_{k,\ell})^{-1}-s(\sigma_{k,\ell})s(\sigma_{i,j})s(\sigma_{k,\ell}\sigma_{i,j})^{-1}\\
		\kappa([\sigma_{i,j}\mid \sigma_{k,\ell}]-[\sigma_{i,k}\mid \sigma_{i,j} ])&=s(\sigma_{i,j})s(\sigma_{j,k})s(\sigma_{i,j}\sigma_{j,k})^{-1}-s(\sigma_{i,k})s(\sigma_{i,j})s(\sigma_{i,k}\sigma_{i,j})^{-1}
	\end{align*}
	Therefore $\kappa'=\kappa\circ\gamma_{2}$ and by Section \ref{group cohomology}, both $[\kappa']$ and $[\kappa]$ correspond to the same extension.
\end{proof}
	
\section{Cohomology class of $S_{n}$ with coefficients in $\mathbb{Z}^{n\choose2}$}\label{section on Gn}
\paragraph{Extension of $S_{n}$ by $\mathbb{Z}^{n\choose 2}$}
Let $K_{n}$ be the commutator subgroup of $PB_{n}$. Since $H_{1}(PB_{n};\mathbb{Z})\approx\mathbb{Z}^{n\choose 2}$ is the abelianization of $PB_{n}$, we have $\mathbb{Z}^{n\choose2}\approx H_{1}(PB_{n};\mathbb{Z})\approx PB_{n}/K_{n}$. Furthermore, $K_{n}$ is a characteristic subgroup of $PB_{n}$ and $PB_{n}$ is normal in $B_{n}$, therefore $K_{n}$ is normal in $B_{n}$. By the third isomorphism theorem,
	$$(B_{n}/K_{n})/(PB_{n}/K_{n})\approx B_{n}/PB_{n}\approx S_{n}$$
Let $G_{n}=B_{n}/K_{n}$, then $G_{n}$ is an extension of $S_{n}$ by $H_{1}(PB_{n};\mathbb{Z})$. Therefore we have the group extension:
	$$\begin{tikzcd}
		0\arrow{r}&\mathbb{Z}^{n\choose2}\arrow{r}{\iota_{1}}&G_{n}\arrow{r}{\pi_{1}}&S_{n}\arrow{r}&1
	\end{tikzcd}$$
where the action of $S_{n}$ on $\mathbb{Z}^{n\choose2}$ is induced by the conjugation of $B_{n}$ on $PB_{n}$. Notice the induced action of $S_{n}$ on the abelianization of the pure braid group permutes the strands of pure braids. So $S_{n}$ permutes the generators of $\mathbb{Z}^{n\choose2}$ by acting on the indices with the standard action of $S_{n}$ on unordered pairs of integers.
\paragraph{Normalized section}
Since $G_{n}$ is a quotient of $B_{n}$, let $\tilde{\sigma}_{i,j}$ represent the projection of the positively oriented half twist between the $i^{th}$ and $j^{th}$ strands, $b_{i,j}$ from \eqref{braid group}, in $G_{n}$. In particular, we need to fix a choice of normal form for each element in $S_{n}$ and choose an algorithm which takes any element of $S_{n}$, and produces the chosen normal form in terms of the generators $\sigma_{i,j}$. 
	
Note, since we consider multiplication in $G_{n}$ from left to right, we will also consider composition of permutations in $S_{n}$ from left to right for consistency. Let $p\in S_{n}$ such that $p(n)=k_{n}$, then $p\cdot \sigma_{k_{n},n}\in S_{n-1}$. Suppose $p\cdot\sigma_{k_{n},n}(n-1)=k_{n-1}$, then $p\cdot\sigma_{k_{n},n}\cdot\sigma_{k_{n-1},n-1}\in S_{n-2}$. Inductively:
$$p\cdot\sigma_{k_{n},n}\cdot\sigma_{k_{n-1},n-1}\cdots\sigma_{1,k_{1}}=1$$
Therefore $p=\sigma_{1,k_{1}}\cdot\sigma_{2,k_{2}}\cdots\sigma_{k_{n},n}$. Define the normal section $s:S_{n}\to G_{n}$ by:
$$s(p)=\tilde{\sigma}_{1,k_{1}}\tilde{\sigma}_{2,k_{2}}\cdots\tilde{\sigma}_{k_{n},n}$$
\subsection{Presentation of $G_{n}$}
\begin{table}
	\begin{center}
		\caption{Relations of $G_{n}$}\label{Table: Relations of G}
		\vspace{1em}
		\begin{tabular}{lcl}			
			
			\vspace{.5em}
			R1:&$[g_{i,j},g_{k,\ell}]=1$& for all $i,j,k,\ell$\\
			
			\vspace{.5em}
			R2:& $\tilde{\sigma}_{i,j}^{2}=g_{i,j}$ &for all $i$\\
			
			\vspace{1em}
			R3:& 
			$\tilde{\sigma}_{i,j}\tilde{\sigma}_{k,j}\tilde{\sigma}_{i,j}^{-1}=\tilde{\sigma}_{i,k}$
			&if\hspace{1em}$k<i<j;\hspace{.5em}i<j<k;\hspace{.5em}\text{or}\hspace{.5em}j<k<i$\\
				
			\vspace{.5em}
			R4:& $\tilde{\sigma}_{i,j}^{-1}\tilde{\sigma}_{j,k}\tilde{\sigma}_{i,j}=\tilde{\sigma}_{i,k}$ & 
			if\hspace{1em}$i<k<j;\hspace{.5em}j<i<k;\hspace{.5em}\text{or}\hspace{.5em}k<j<i$\\
				
			\vspace{.5em}
			R5:&$[\tilde{\sigma}_{i,j},\tilde{\sigma}_{k,\ell}]=
			\begin{cases}
				g_{i,k}g_{i,\ell}^{-1}g_{j,k}^{-1}g_{j,\ell}\\
				g_{k,i}^{-1}g_{k,j}g_{i,\ell}g_{\ell,j}^{-1}\\
				1
			\end{cases}$ & 
			$\begin{aligned}
					i<k<j<\ell\\
					k<i<\ell<j\\
					\text{otherwise}
				\end{aligned}$\\
				
			\vspace{.5em}
			R6:& $\tilde{\sigma}_{i,j}g_{k,\ell}\tilde{\sigma}_{i,j}^{-1}=
			g_{\sigma_{i,j}(k),\sigma_{i,j}(\ell)}$&for all $i,j,k,\ell\in\{1,\ldots,n\}$\\
		\end{tabular}\\
	\end{center}
\end{table}
\paragraph{Generators of $G_{n}$}
Let $g_{i,j}$ $(1\leq i<j\leq n)$ be the commuting generators of $\mathbb{Z}^{n\choose2}$. Since $\mathbb{Z}^{n\choose2}\approx PB_{n}/K_{n}$, each $g_{i,j}$ represents the projections of a pure braid, $b_{i,j}^{2}$, in $G_{n}$. In particular $g_{i,j}$ is the positively oriented full twist of the $i^{\text{th}}$ and $j^{\text{th}}$ strands while $g_{i,j}^{-1}$ represents the negatively oriented full twist between the $i^{\text{th}}$ and $j^{\text{th}}$ strands. By the choice of section, each $\sigma_{i,j}$ ($\sigma_{i,j}^{-1}$) in the generating set of $S_{n}$ lifts to the positively (negatively) oriented half twist between the $i^{\text{th}}$ and $j^{\text{th}}$ strands, denoted $\tilde{\sigma}_{i,j}$ ($\tilde{\sigma}_{i,j}^{-1}$). Thus by section \ref{section on building presentations}, a generating set for $G_{n}$ is:
	$$\{g_{i,j}\}\cup \{\tilde{\sigma}_{i,j}\}$$
where $1\leq i<j\leq n$.
\subsubsection*{Relations of $G_{n}$}
Note that we will remove the assumption $i<j$ for many of the statements and proofs in this section for more efficient notation. Recall from section \ref{section on building presentations}, to construct a presentation for $G_{n}$, we need to include relations of $\mathbb{Z}^{n\choose2}$, lift relations of $S_{n}$, and conjugate generators of $\mathbb{Z}^{n\choose2}$ by generators of $S_{n}$. Therefore a full list of all the relations in $G_{n}$ are given in Table \ref{Table: Relations of G}. Notice R1 is the included relation of $\mathbb{Z}^{n\choose2}$, R2-R5 are the lifted relations of $S_{n}$, and R6 is the conjugation of relations of generators. 

The rest of this section will prove the relations given in Table \ref{Table: Relations of G}. Since $\mathbb{Z}^{n\choose 2}$ embeds into $G_{n}$, the relation $[g_{i,j},g_{k,\ell}]$ is preserved in $G_{n}$. Therefore R1 holds in $G_{n}$ as the inclusion of the relation in $\mathbb{Z}^{n\choose2}$. We begin with a theorem describing how to determine elements of $\mathbb{Z}^{n\choose 2}$ in $G_{n}$.
\begin{theorem}\label{thm: strand diagrams}
	Let $k\in\iota_{1}(\mathbb{Z}^{n\choose 2})$. Then $k$ is determined by the winding numbers of a strand diagram.
\end{theorem}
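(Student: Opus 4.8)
The plan is to identify the winding numbers with the coordinates of the abelianization isomorphism $H_1(PB_n;\mathbb{Z})\approx\mathbb{Z}^{n\choose2}$ and then to observe that $\iota_1(\mathbb{Z}^{n\choose2})$ is exactly the image of the pure braid group in $G_n$, namely $PB_n/K_n$. Once these two facts are in place, the assertion that $k$ is ``determined by the winding numbers of a strand diagram'' becomes the statement that the winding numbers, viewed as a map out of $PB_n/K_n$, form a complete invariant, which I would extract from the cited computation of $H_1(PB_n;\mathbb{Z})$ on page 252 of \cite{F/M}.

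First I would record that $k\in\iota_1(\mathbb{Z}^{n\choose2})=\ker\pi_1$ and lift $k$ to a braid $\beta\in B_n$. Because the composite $B_n\twoheadrightarrow G_n\xrightarrow{\pi_1}S_n$ is the standard projection and $\pi_1(k)=1$, the lift $\beta$ lies in $PB_n$; thus $k$ is represented by a pure braid, that is, by a strand diagram in which every strand returns to its starting point, and any two lifts of $k$ differ by an element of $K_n$. Next I would set up the winding numbers as a homomorphism: for each pair $i<j$, sending a pure braid to the number of positively oriented full twists between its $i^{\text{th}}$ and $j^{\text{th}}$ strands defines a map $w_{i,j}\colon PB_n\to\mathbb{Z}$, and assembling these gives $W=(w_{i,j})\colon PB_n\to\mathbb{Z}^{n\choose2}$. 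Since the target is abelian, $W$ annihilates $K_n=[PB_n,PB_n]$ and descends to $\overline{W}\colon PB_n/K_n\to\mathbb{Z}^{n\choose2}$. The generator $b_{i,j}^{2}$ is a single positive full twist of strands $i$ and $j$, so $W(b_{i,j}^{2})$ is the standard basis vector indexed by $\{i,j\}$; hence $\overline{W}$ carries the classes of the $b_{i,j}^{2}$ to a basis. As $PB_n/K_n=H_1(PB_n;\mathbb{Z})$ is free abelian of rank $\binom{n}{2}$ on precisely these classes, $\overline{W}$ is an isomorphism.

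Finally I would conclude: two pure braids have equal winding numbers if and only if they agree under $\overline{W}$, if and only if they differ by $K_n$, if and only if they represent the same element of $\iota_1(\mathbb{Z}^{n\choose2})=PB_n/K_n$. Since the winding numbers of $k$ are independent of the chosen lift $\beta$ (two lifts differ by $K_n\subseteq\ker W$), the tuple $\overline{W}(k)$ is a well-defined complete invariant, giving $k=\sum_{i<j}w_{i,j}(k)\,g_{i,j}$ and letting one read off $k$ directly from any representing diagram.

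The hard part will be exactly this well-definedness: one must check that moves on strand diagrams which do not alter the class in $G_n$ leave every winding number unchanged, and that each $w_{i,j}$ is genuinely a homomorphism respecting the pure-braid relations rather than merely a count attached to a picture. This is automatic once $W$ is known to land in an abelian group (forcing $W$ to factor through $K_n$), so the substantive content is the identification of the $w_{i,j}$ with the coordinate functions of $H_1(PB_n;\mathbb{Z})$, which I would cite rather than reprove.
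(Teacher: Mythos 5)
Your proposal is correct and follows essentially the same route as the paper: both arguments come down to the fact that $\iota_{1}(\mathbb{Z}^{n\choose2})=PB_{n}/K_{n}=H_{1}(PB_{n};\mathbb{Z})$ is free abelian on the classes of the full twists $b_{i,j}^{2}$, with the winding numbers serving as the coordinate functions (cited to Farb--Margalit in both cases). The paper simply reads the winding numbers off the unique normal form $k=\prod_{i<j}g_{i,j}^{\epsilon_{i,j}}$, while you additionally make explicit the descent of the winding-number homomorphism $\overline{W}$ through $K_{n}$ and its evaluation on the basis --- a well-definedness check the paper leaves implicit.
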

\begin{proof}
	Let $k\in\iota_{1}(\mathbb{Z}^{n\choose2})$, then $k$ is represented by a pure braid since $\mathbb{Z}^{n\choose2}\approx PB_{n}/K_{n}$. Since $[g_{i,j},g_{k,\ell}]=1$, $k$ can be written uniquely as:
		$$k=\prod_{i=1}^{n-1}\prod_{j=i+1}^{n}g_{i,j}^{\epsilon_{i,j}}$$
	where $\epsilon_{i,j}\in\mathbb{Z}$ is the number of positively oriented full twists between the $i$ and $j$ strands. By definition $\epsilon_{i,j}$ is the winding number between the $i$th and $j$th strands. Therefore the winding numbers of all combinations of strands determine $k$. 
\end{proof}
Since $\tilde{\sigma}_{i,j}$ represents the positively oriented half twist between the $i^{\text{th}}$ and $j^{\text{th}}$ strands, R2 is a relation of $G_{n}$. We begin by proving relations R3 and R4.
\begin{lemma}\label{R3}
	If $k<i<j$, $i<j<k$, or $j<k<i$, then 
		$$\tilde{\sigma}_{i,j}\tilde{\sigma}_{j,k}\tilde{\sigma}_{i,j}^{-1}=\tilde{\sigma}_{i,k}$$
\end{lemma}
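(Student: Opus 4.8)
The plan is to recognize that R3 is not an independent fact but simply the image, under the defining quotient $B_{n}\to G_{n}=B_{n}/K_{n}$, of a conjugation relation already recorded in the braid group presentation \eqref{braid group}. First I would recall that by construction $\tilde{\sigma}_{i,j}$ is the projection of the band generator $b_{i,j}$, so the quotient homomorphism $q\colon B_{n}\to G_{n}$ satisfies $q(b_{i,j})=\tilde{\sigma}_{i,j}$ for every admissible pair of indices.

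Next I would match the hypotheses. The second family of relations in \eqref{braid group} asserts $b_{i,j}b_{j,k}b_{i,j}^{-1}=b_{i,k}$ exactly when $i<j<k$, $k<i<j$, or $j<k<i$; reordering this list shows it coincides with the three cases $k<i<j$, $i<j<k$, $j<k<i$ named in the statement of R3. Since $q$ is a group homomorphism, it carries this identity in $B_{n}$ to
\[
	\tilde{\sigma}_{i,j}\tilde{\sigma}_{j,k}\tilde{\sigma}_{i,j}^{-1}=\tilde{\sigma}_{i,k}
\]
in $G_{n}$, which is precisely the claim. Note that $K_{n}$ never enters: the relation holds on the nose in $B_{n}$, so no correction term in $\mathbb{Z}^{n\choose 2}$ can appear after projecting.

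The only genuine care required is bookkeeping rather than mathematics: I must confirm that the convention dropping the requirement $i<j$ (flagged at the opening of this section) is compatible with the indexing of the band generators in \eqref{braid group}, so that symbols such as $\tilde{\sigma}_{j,k}$ and $\tilde{\sigma}_{i,k}$ with a possibly larger first index are read as the images of the correspondingly indexed $b_{j,k}$ and $b_{i,k}$. I expect this index-matching to be the main, and essentially the only, obstacle; once it is pinned down the lemma is immediate. Should a fully self-contained argument be preferred, the same identity can be read off directly from strand diagrams by pulling the intermediate strands as in the definition of $b_{i,j}$ and observing that both sides realize the same half-twist between strands $i$ and $k$.
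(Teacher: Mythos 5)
Your proposal is correct and is essentially the paper's own proof: both observe that the claimed identity is exactly the second family of relations in the presentation \eqref{braid group} (with the same three cyclic orderings of $i,j,k$), and that it therefore descends along the quotient $B_{n}\to G_{n}=B_{n}/K_{n}$ since $\tilde{\sigma}_{i,j}$ is by definition the image of $b_{i,j}$. Your additional remarks on index conventions and the strand-diagram alternative are fine but not needed beyond what the paper does.
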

\begin{proof}
	Note that $b_{i,j}b_{j,k}b_{i,j}^{-1}=b_{i,k}$ if $i<j<k$, $k<i<j$, or $j<k<i$ is a relation of $B_{n}$ in \eqref{braid group}. Since $G_{n}$ is a quotient of $B_{n}$ with $\tilde{\sigma}_{i,j}$ representing the projection of $b_{i,j}$ in $G_{n}$, $\tilde{\sigma}_{i,j}\tilde{\sigma}_{j,k}\tilde{\sigma}_{i,j}^{-1}=\tilde{\sigma}_{i,k}$ is a relation of $G_{n}$.
\end{proof}
\begin{lemma}\label{R4}
	If $i<k<j$, $j<i<k$, or $k<j<i$, then
	$$\tilde{\sigma}_{i,j}\tilde{\sigma}_{j,k}\tilde{\sigma}_{i,j}^{-1}\tilde{\sigma}_{i,k}^{-1}=g_{i,j}g_{k,j}^{-1}$$
\end{lemma}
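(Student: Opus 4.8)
The plan is to reduce the claim to the braid relation already used in Lemma \ref{R3} together with a single winding-number computation. First I would record that $\tilde{\sigma}_{i,j}\tilde{\sigma}_{j,k}\tilde{\sigma}_{i,j}^{-1}\tilde{\sigma}_{i,k}^{-1}$ is a pure braid: by the relation $\sigma_{i,j}\sigma_{j,k}\sigma_{i,j}^{-1}=\sigma_{i,k}$ of \eqref{symmetric group}, its image under $\pi_{1}$ is $\sigma_{i,k}\sigma_{i,k}^{-1}=1$, so it lies in $\iota_{1}(\mathbb{Z}^{n\choose2})$ and, by Theorem \ref{thm: strand diagrams}, is determined by its winding numbers.

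Next I would invoke the appropriate case of \eqref{braid group}. For each of the orderings $i<k<j$, $j<i<k$, $k<j<i$ the third relation of \eqref{braid group} reads $b_{i,j}^{-1}b_{j,k}b_{i,j}=b_{i,k}$, equivalently $b_{j,k}=b_{i,j}b_{i,k}b_{i,j}^{-1}$. Substituting this into $b_{i,j}b_{j,k}b_{i,j}^{-1}$ gives $b_{i,j}^{2}b_{i,k}b_{i,j}^{-2}$; projecting to $G_{n}$ and using $\tilde{\sigma}_{i,j}^{2}=g_{i,j}$ (relation R2 of Table \ref{Table: Relations of G}) yields
$$\tilde{\sigma}_{i,j}\tilde{\sigma}_{j,k}\tilde{\sigma}_{i,j}^{-1}=g_{i,j}\,\tilde{\sigma}_{i,k}\,g_{i,j}^{-1}.$$
Hence $\tilde{\sigma}_{i,j}\tilde{\sigma}_{j,k}\tilde{\sigma}_{i,j}^{-1}\tilde{\sigma}_{i,k}^{-1}=g_{i,j}\bigl(\tilde{\sigma}_{i,k}g_{i,j}^{-1}\tilde{\sigma}_{i,k}^{-1}\bigr)$, so the lemma reduces to the single identity $\tilde{\sigma}_{i,k}g_{i,j}\tilde{\sigma}_{i,k}^{-1}=g_{k,j}$.

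To finish I would establish this conjugation directly from the strand-diagram picture rather than from R6, which is only proved later. The element $\tilde{\sigma}_{i,k}g_{i,j}\tilde{\sigma}_{i,k}^{-1}$ is again a pure braid, and conjugating a pure braid by $\tilde{\sigma}_{i,k}$ relabels its strands by the transposition $\sigma_{i,k}$: the winding number of $\tilde{\sigma}_{i,k}\,\beta\,\tilde{\sigma}_{i,k}^{-1}$ between strands $a$ and $b$ equals the winding number of $\beta$ between $\sigma_{i,k}(a)$ and $\sigma_{i,k}(b)$. Since $g_{i,j}$ has a single positive full twist between strands $i$ and $j$ and no other winding, and $\sigma_{i,k}$ carries the pair $\{i,j\}$ to $\{k,j\}$ (as $j\notin\{i,k\}$ in every case of the hypothesis), Theorem \ref{thm: strand diagrams} identifies the conjugate as $g_{k,j}$. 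Combining this with the previous step gives $\tilde{\sigma}_{i,j}\tilde{\sigma}_{j,k}\tilde{\sigma}_{i,j}^{-1}\tilde{\sigma}_{i,k}^{-1}=g_{i,j}g_{k,j}^{-1}$.

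The routine parts are the substitution into \eqref{braid group} and the bookkeeping with R2; the one step demanding genuine care is the winding-number relabelling in the last paragraph, where I must fix the over/under and orientation conventions of Section 2.3 and confirm that the intermediate strands traversed by the band generators $b_{i,j}$ and $b_{i,k}$ contribute no net winding with $i$, $j$, $k$, so that the conjugate is exactly the single full twist $g_{k,j}$ rather than a product involving other generators.
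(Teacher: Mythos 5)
Your proposal is correct, but it takes a genuinely different route from the paper. The paper's proof shares your first step---observing that the element lies in $\iota_{1}(\mathbb{Z}^{n\choose2})$ and invoking Theorem \ref{thm: strand diagrams}---but then it computes all winding numbers of $\tilde{\sigma}_{i,j}\tilde{\sigma}_{j,k}\tilde{\sigma}_{i,j}^{-1}\tilde{\sigma}_{i,k}^{-1}$ directly from the three case diagrams of Figure \ref{R4 diagram}, once for each ordering $i<k<j$, $j<i<k$, $k<j<i$. You instead exploit the fact that the lemma's case list coincides exactly with that of the third relation of \eqref{braid group}, substitute $b_{j,k}=b_{i,j}b_{i,k}b_{i,j}^{-1}$, and use R2 to reduce all three orderings uniformly to the single conjugation identity $\tilde{\sigma}_{i,k}g_{i,j}\tilde{\sigma}_{i,k}^{-1}=g_{k,j}$; your algebra here is sound. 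That identity is precisely one case of Theorem \ref{thm: conj full by half with intersection}, which the paper proves immediately after Lemma \ref{R4} by independent diagram computations (Figures \ref{conj full twist with intersection j} and \ref{conj full twist intersection i}), so your argument in effect reverses the paper's logical order: the paper derives relation R4 (Theorem \ref{relation 4}) from Lemma \ref{R4} together with Theorem \ref{thm: conj full by half with intersection}, whereas you derive Lemma \ref{R4} from the braid relation together with that conjugation fact---and there is no circularity either way, since the proof of Theorem \ref{thm: conj full by half with intersection} does not use Lemma \ref{R4}. What your route buys: the geometry is isolated in a single conjugation, which could even be dispatched without new pictures by the fact, already asserted at the start of Section \ref{section on Gn}, that conjugation in $B_{n}$ induces on $H_{1}(PB_{n};\mathbb{Z})$ the standard $S_{n}$-action on unordered pairs. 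What the paper's route buys: self-containedness at that point in the text, needing only Figure \ref{R4 diagram} and no relabelling principle. One caution you already half-acknowledge: if you justify the relabelling diagrammatically rather than through the $H_{1}$-action, the verification of $\tilde{\sigma}_{i,k}g_{i,j}\tilde{\sigma}_{i,k}^{-1}=g_{k,j}$ itself splits into orderings of $i,j,k$ (exactly as in the paper's Figures \ref{conj full twist with intersection j} and \ref{conj full twist intersection i}), so the net diagram bookkeeping is comparable; the genuine gain of your approach is the uniform algebraic reduction, not fewer pictures.
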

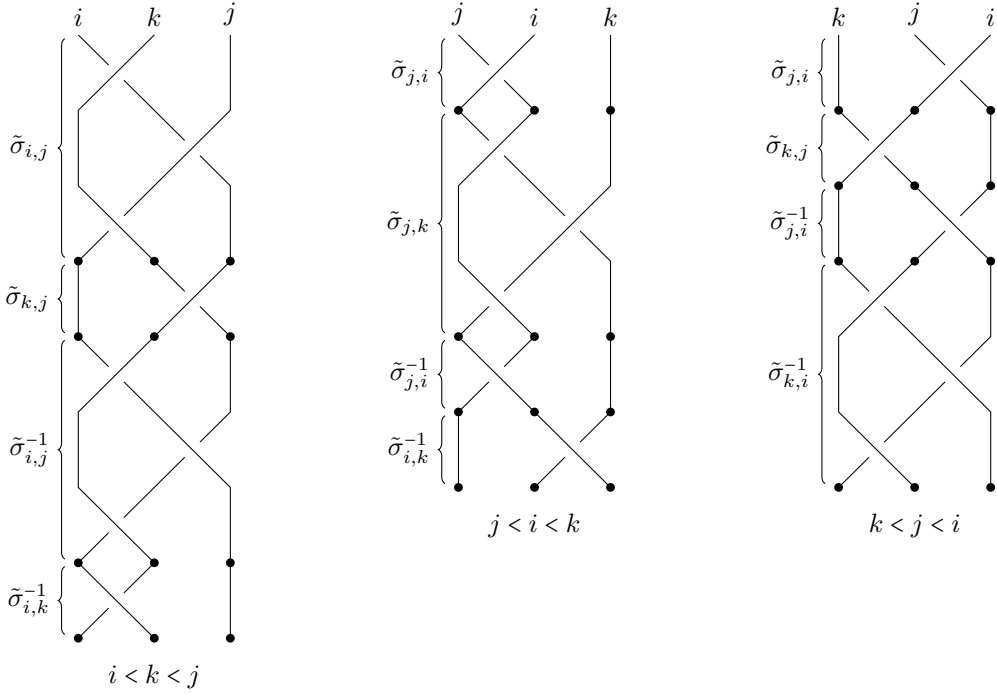
\begin{figure}
	\caption{$\tilde{\sigma}_{i,j}\tilde{\sigma}_{j,k}\tilde{\sigma}_{i,j}^{-1}\tilde{\sigma}_{i,k}^{-1}$} \label{R4 diagram}
	$$\begin{tikzpicture}
		\node[above] at (0,0){$i$};
		\node[above] at (1,0){$k$};
		\node[above] at (2,0){$j$};
		\draw (0,0) to (.4,-.4);
		\draw (.6,-.6) to (1.4,-1.4);
		\draw (1.6,-1.6) to (2,-2) to (2,-3);
		\draw (1,0) to (0,-1) to (0,-2) to (1,-3);
		\draw (2,0) to (2,-1) to (.6,-2.4);
		\draw (.4,-2.6) to (0,-3);
		\node[circle, fill, scale=.35] at (0,-3){};
		\node[circle, fill, scale=.35] at (1,-3){};
		\node[circle, fill, scale=.35] at (2,-3){};
		\draw[decoration={brace,mirror,raise=5pt},decorate]
		(0,-.05) -- node[left=7pt] {$\tilde{\sigma}_{i,j}$} (0,-2.95);
		\draw (0,-3) to (0,-4);
		\draw (1,-3) to (1.4,-3.4);
		\draw (1.6,-3.6) to (2,-4);
		\draw (2,-3) to (1,-4);
		\node[circle, fill, scale=.35] at (0,-4){};
		\node[circle, fill, scale=.35] at (1,-4){};
		\node[circle, fill, scale=.35] at (2,-4){};
		\draw[decoration={brace,mirror,raise=5pt},decorate]
		(0,-3.05) -- node[left=7pt] {$\tilde{\sigma}_{k,j}$} (0,-3.95);
		\draw (0,-4) to (.4,-4.4);
		\draw (.6,-4.6) to (2,-6) to (2,-7);
		\draw (1,-4) to (0,-5) to (0,-6) to (1,-7);
		\draw (2,-4) to (2,-5) to (1.6,-5.4);
		\draw (1.4,-5.6) to (.6,-6.4);
		\draw (.4,-6.6) to (0,-7);
		\node[circle, fill, scale=.35] at (0,-7){};
		\node[circle, fill, scale=.35] at (1,-7){};
		\node[circle, fill, scale=.35] at (2,-7){};
		\draw[decoration={brace,mirror,raise=5pt},decorate]
		(0,-4.05) -- node[left=7pt] {$\tilde{\sigma}_{i,j}^{-1}$} (0,-6.95);
		\draw (0,-7) to (1,-8);
		\draw (1,-7) to (.6,-7.4);
		\draw (.4,-7.6) to (0,-8);
		\draw (2,-7) to (2,-8);
		\node[circle, fill, scale=.35] at (0,-8){};
		\node[circle, fill, scale=.35] at (1,-8){};
		\node[circle, fill, scale=.35] at (2,-8){};
		\draw[decoration={brace,mirror,raise=5pt},decorate]
		(0,-7.05) -- node[left=7pt] {$\tilde{\sigma}_{i,k}^{-1}$} (0,-7.95);
		\node at (1,-8.5){$i<k<j$};
		\node[above] at (5,0){$j$};
		\node[above] at (6,0){$i$};
		\node[above] at (7,0){$k$};
		\draw (7,0) to (7,-1);
		\draw (5,0) to (5.4,-.4);
		\draw (5.6,-.6) to (6,-1);
		\draw (6,0) to (5,-1);
		\node[circle, fill, scale=.35] at (7,-1){};
		\node[circle, fill, scale=.35] at (6,-1){};
		\node[circle, fill, scale=.35] at (5,-1){};
		\draw[decoration={brace,mirror,raise=5pt},decorate]
		(5,-.05) -- node[left=7pt] {$\tilde{\sigma}_{j,i}$} (5,-.95);
		\draw (5,-1) to (5.4,-1.4);
		\draw (5.6,-1.6) to (6.4,-2.4);
		\draw (6.6,-2.6) to (7,-3) to (7,-4);
		\draw (6,-1) to (5,-2) to (5,-3) to (6,-4);
		\draw (7,-1) to (7,-2) to (5.6,-3.4);
		\draw (5.4,-3.6) to (5,-4);
		\node[circle, fill, scale=.35] at (7,-4){};
		\node[circle, fill, scale=.35] at (6,-4){};
		\node[circle, fill, scale=.35] at (5,-4){};
		\draw[decoration={brace,mirror,raise=5pt},decorate]
		(5,-1.05) -- node[left=7pt] {$\tilde{\sigma}_{j,k}$} (5,-3.95);
		\draw (5,-4) to (6,-5);
		\draw (6,-4) to (5.6,-4.4);
		\draw (5.4,-4.6) to (5,-5);
		\draw (7,-4) to (7,-5);
		\node[circle, fill, scale=.35] at (7,-5){};
		\node[circle, fill, scale=.35] at (6,-5){};
		\node[circle, fill, scale=.35] at (5,-5){};
		\draw[decoration={brace,mirror,raise=5pt},decorate]
		(5,-4.05) -- node[left=7pt] {$\tilde{\sigma}_{j,i}^{-1}$} (5,-4.95);
		\draw (5,-5) to (5,-6);
		\draw (6,-5) to (7,-6);
		\draw (7,-5) to (6.6,-5.4);
		\draw (6.4,-5.6) to (6,-6);
		\node[circle, fill, scale=.35] at (7,-6){};
		\node[circle, fill, scale=.35] at (6,-6){};
		\node[circle, fill, scale=.35] at (5,-6){};
		\draw[decoration={brace,mirror,raise=5pt},decorate]
		(5,-5.05) -- node[left=7pt] {$\tilde{\sigma}_{i,k}^{-1}$} (5,-5.95);
		\node at (6,-6.5){$j<i<k$};
		\node[above] at (10,0){$k$};
		\node[above] at ( 11,0){$j$};
		\node[above] at (12,0){$i$};
		\draw (10,0) to (10,-1);
		\draw (11,0) to (11.4,-.4);
		\draw (11.6,-.6) to (12,-1);
		\draw (12,0) to (11,-1);
		\node[circle, fill, scale=.35] at (10,-1){};
		\node[circle, fill, scale=.35] at (11,-1){};
		\node[circle, fill, scale=.35] at (12,-1){};
		\draw[decoration={brace,mirror,raise=5pt},decorate]
		(10,-.05) -- node[left=7pt] {$\tilde{\sigma}_{j,i}$} (10,-.95);
		\draw (10,-1) to (10.4,-1.4);
		\draw (10.6,-1.6) to (11,-2);
		\draw (11,-1) to (10,-2);
		\draw (12,-1) to (12,-2);
		\node[circle, fill, scale=.35] at (10,-2){};
		\node[circle, fill, scale=.35] at (11,-2){};
		\node[circle, fill, scale=.35] at (12,-2){};
		\draw[decoration={brace,mirror,raise=5pt},decorate]
		(10,-1.05) -- node[left=7pt] {$\tilde{\sigma}_{k,j}$} (10,-1.95);
		\draw (11,-2) to (12,-3);
		\draw (12,-2) to (11.6,-2.4);
		\draw (11.4,-2.6) to (11,-3);
		\draw (10,-2) to (10,-3);
		\node[circle, fill, scale=.35] at (10,-3){};
		\node[circle, fill, scale=.35] at (11,-3){};
		\node[circle, fill, scale=.35] at (12,-3){};
		\draw[decoration={brace,mirror,raise=5pt},decorate]
		(10,-2.05) -- node[left=7pt] {$\tilde{\sigma}_{j,i}^{-1}$} (10,-2.95);
		\draw (11,-3) to (10,-4) to (10,-5) to (11,-6);
		\draw (10,-3) to (10.4,-3.4);
		\draw (10.6,-3.6) to (12,-5) to (12,-6);
		\draw (12,-3) to (12,-4) to (11.6,-4.4);
		\draw (11.4,-4.6) to (10.6,-5.4);
		\draw (10.4,-5.6) to (10,-6);
		\node[circle, fill, scale=.35] at (10,-6){};
		\node[circle, fill, scale=.35] at (11,-6){};
		\node[circle, fill, scale=.35] at (12,-6){};
		\draw[decoration={brace,mirror,raise=5pt},decorate]
		(10,-3.05) -- node[left=7pt] {$\tilde{\sigma}_{k,i}^{-1}$} (10,-5.95);
		\node at (11,-6.5){$k<j<i$};
	\end{tikzpicture}$$
\end{figure}
\begin{proof}
	By rewriting the relations as $\sigma_{i,j}\sigma_{j,k}\sigma_{i,j}^{-1}\sigma_{i,k}^{-1}=1$ in $S_{n}$, we get $\tilde{\sigma}_{i,j}\tilde{\sigma}_{j,k}\tilde{\sigma}_{i,j}^{-1}\tilde{\sigma}_{i,k}^{-1}$ is in $\ker\pi$. Therefore $\tilde{\sigma}_{i,j}\tilde{\sigma}_{j,k}\tilde{\sigma}_{i,j}^{-1}\tilde{\sigma}_{i,k}^{-1}\in\mathbb{Z}^{n\choose2}$ and by Theorem \ref{thm: strand diagrams} is determined by Figure \ref{R4 diagram}.
	Suppose $i<k<j$, notice that by Figure \ref{R4 diagram} strands $i$ and $k$ have both a clockwise and counterclockwise twist, therefore the winding number of $g_{i,k}$ is zero. Furthermore, strand $j$ passes over strand $i$ then under, so $g_{i,j}$ has a winding number of $1$. Also, strand $j$ passes under strand $k$ then over it, thus $g_{k,j}$ has a winding number of $-1$.
		
	If $j<i<k$, $g_{j,i}$ has a winding number of $1$ in Figure \ref{R4 diagram}. Now $k$ passes over $i$ twice and $g_{i,k}$ has a winding number of zero. Furthermore $k$ passes under then over strand $j$, so $g_{j,k}$ has a winding number of $-1$.
		
	Suppose $k<j<i$, by Figure \ref{R4 diagram} strand $k$ passes under $i$, over $j$, then under both $i$ and $j$. Therefore $g_{k,i}$ has a winding number of zero while $g_{k,j}$ has a winding number of $-1$. Now, $i$ passes over then under $j$, so $g_{j,i}$ has winding number $1$ since $i>j$.
	\end{proof}
\paragraph{Relation 3} Notice that relation R3 is proven by Lemma \ref{R3}. To prove relation R4, we need to show Lemma \ref{R4} implies $\tilde{\sigma}_{i,j}^{-1}\tilde{\sigma}_{k,\ell}\tilde{\sigma}_{i,j}=\tilde{\sigma}_{i,k}$. To prove this we need to prove the case of R6 when $|\{i,j\}\cap\{k,\ell\}|=1$.
\begin{theorem}\label{thm: conj full by half with intersection}
	Suppose $|\{i,j\}\cap\{k,\ell\}|=1$, then:
	$$\tilde{\sigma}_{i,j}g_{k,\ell}\tilde{\sigma}_{i,j}^{-1}=g_{\sigma_{i,j}(k),\sigma_{i,j}(\ell)}$$
\end{theorem}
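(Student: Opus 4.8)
The plan is to compute the conjugate directly from its strand diagram, mirroring the proof of Lemma \ref{R4}. First I would observe that $\iota_{1}(\mathbb{Z}^{n\choose 2})=\ker\pi_{1}$ is normal in $G_{n}$, so $\tilde{\sigma}_{i,j}g_{k,\ell}\tilde{\sigma}_{i,j}^{-1}$ again lies in $\iota_{1}(\mathbb{Z}^{n\choose 2})$. By Theorem \ref{thm: strand diagrams} it is then completely determined by the winding numbers of a representing strand diagram, so it suffices to exhibit a diagram and verify that its only nonzero winding number is a single positive full twist between the strands at positions $\sigma_{i,j}(k)$ and $\sigma_{i,j}(\ell)$; that element is exactly $g_{\sigma_{i,j}(k),\sigma_{i,j}(\ell)}$.

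Next I would cut down the number of cases. Writing the shared index as $m$, we have $\{i,j\}=\{m,a\}$ and $\{k,\ell\}=\{m,b\}$ with $a,b,m$ distinct, and $\sigma_{i,j}$ fixes every strand other than $a$ and $m$; in particular it fixes $b$, so $\sigma_{i,j}(\{k,\ell\})=\{a,b\}$. Hence only the three strands $a,b,m$ participate nontrivially, and the diagram for $\tilde{\sigma}_{i,j}g_{k,\ell}\tilde{\sigma}_{i,j}^{-1}$ is the half twist on the $\{m,a\}$ strands, followed by the full twist on the $\{m,b\}$ strands, followed by the inverse half twist on the $\{m,a\}$ strands — precisely the three-strand picture used in Figure \ref{R4 diagram}. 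Stacking these, the top half twist carries the strand starting at $m$ toward the position of $a$, the middle full twist winds it once around the strand at $b$, and the bottom inverse half twist restores the positions; tracking crossings shows the $\{a,b\}$ strands acquire winding number $+1$ while the winding numbers of the pairs $\{a,m\}$ and $\{b,m\}$ cancel between the top and bottom half twists.

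I would then carry this out as an explicit case analysis over the relative orders of $a,b,m$ and over which of $i,j,k,\ell$ equals $m$, exactly as in the proof of Lemma \ref{R4}, taking care that the sign conventions ($\tilde{\sigma}_{i,j}$ positive when $i<j$) are respected so that the surviving twist is positively oriented. In each ordering one draws the three-strand diagram and reads off that the only pair linking nontrivially is the one ending at $a$ and $b$, with winding number $1$, yielding $g_{\sigma_{i,j}(k),\sigma_{i,j}(\ell)}$.

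The main obstacle I anticipate is bookkeeping rather than anything conceptual: one must check in every ordering that the two half twists contribute equal and opposite partial twists to $\{a,m\}$ and $\{b,m\}$, so that no spurious $g_{a,m}^{\pm 1}$ or $g_{b,m}^{\pm 1}$ survives. A purely algebraic shortcut via $g_{k,\ell}=\tilde{\sigma}_{k,\ell}^{2}$ together with Lemmas \ref{R3} and \ref{R4} is tempting, but squaring the R4-type identity $\tilde{\sigma}_{i,j}\tilde{\sigma}_{j,k}\tilde{\sigma}_{i,j}^{-1}=g_{i,j}g_{k,j}^{-1}\tilde{\sigma}_{i,k}$ would require conjugating $g_{i,j}$ and $g_{k,j}$ by $\tilde{\sigma}_{i,k}$, which is itself an instance of the present theorem; so the direct diagrammatic computation is the safer route.
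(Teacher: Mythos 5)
Your proposal matches the paper's proof: the paper likewise observes that $\tilde{\sigma}_{i,j}g_{k,\ell}\tilde{\sigma}_{i,j}^{-1}$ lies in the normal subgroup $\iota_{1}(\mathbb{Z}^{n\choose2})$, invokes Theorem \ref{thm: strand diagrams} to reduce to winding numbers, and then does exactly the case analysis you describe --- splitting on whether the shared index is $\max\{i,j\}$ or $\min\{i,j\}$ and on the three relative orderings, reading off from three-strand diagrams (Figures \ref{conj full twist with intersection j} and \ref{conj full twist intersection i}) that the half-twist contributions cancel and only a single positive full twist between the image strands survives. Your observation that the algebraic route via $g_{k,\ell}=\tilde{\sigma}_{k,\ell}^{2}$ would be circular is also consistent with the paper's ordering of results, since Theorem \ref{relation 4} is derived from this theorem rather than the other way around.
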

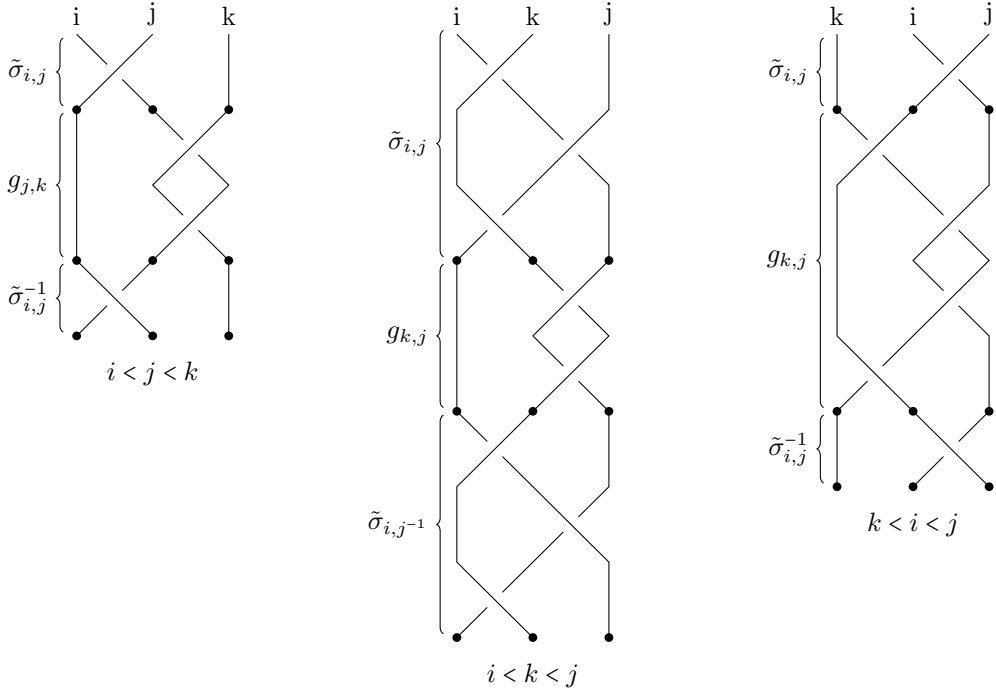
\begin{figure}
	\caption{$\tilde{\sigma}_{i,j}g_{j,k}\tilde{\sigma}_{i,j}^{-1}$}\label{conj full twist with intersection j}
	$$\begin{tikzpicture}
		\node[above] at (0,0){i};
		\node[above] at (1,0){j};
		\node[above] at (2,0){k};
		\draw (1,0) to (0,-1) to (0,-3) to (1,-4);
		\draw (0,0) to (.4,-.4);
		\draw (.6,-.6) to (1.4,-1.4);
		\draw (2,0) to (2,-1) to (1,-2) to (1.4,-2.4);
		\node[circle,fill, scale=.35] at (0,-1){};
		\node[circle,fill, scale=.35] at (1,-1){};
		\node[circle,fill, scale=.35] at (2,-1){};  
		\draw[decoration={brace,mirror,raise=5pt},decorate]
		(0,-.05) -- node[left=7pt] {$\tilde{\sigma}_{i,j}$} (0,-.95);
		\draw (1.6,-1.6) to (2,-2) to (.6,-3.4);
		\draw (1.6,-2.6) to (2,-3) to (2,-4);
		\draw (.4,-3.6) to (0,-4);
		\node[circle,fill, scale=.35] at (0,-3){};
		\node[circle,fill, scale=.35] at (1,-3){};
		\node[circle,fill, scale=.35] at (2,-3){};  
		\draw[decoration={brace,mirror,raise=5pt},decorate]
		(0,-1.05) -- node[left=7pt] {$g_{j,k}$} (0,-2.95);
		\node[circle,fill, scale=.35] at (0,-4){};
		\node[circle,fill, scale=.35] at (1,-4){};
		\node[circle,fill, scale=.35] at (2,-4){};  
		\draw[decoration={brace,mirror,raise=5pt},decorate]
		(0,-3.05) -- node[left=7pt] {$\tilde{\sigma}_{i,j}^{-1}$} (0,-3.95);
		\node[above] at (5,0){i};
		\node[above] at (6,0){k};
		\node[above] at (7,0){j};
		\draw (5,0) to (5.4,-.4);
		\draw (5.6,-.6) to (6.4,-1.4);
		\draw (6.6,-1.6) to (7,-2) to (7,-3);
		\draw (6,0) to (5,-1) to (5,-2) to (6,-3);
		\draw (7,0) to (7,-1) to (5.6,-2.4);
		\draw (5.4,-2.6) to (5,-3);
		\node[circle,fill, scale=.35] at (5,-3){};
		\node[circle,fill, scale=.35] at (6,-3){};
		\node[circle,fill, scale=.35] at (7,-3){};  
		\draw[decoration={brace,mirror,raise=5pt},decorate]
		(5,.05) -- node[left=7pt] {$\tilde{\sigma}_{i,j}$} (5,-2.95);
		\draw (7,-3) to (6,-4) to (6.4,-4.4);
		\draw (6.6,-4.6) to (7,-5);
		\draw (6,-3) to (6.4,-3.4);
		\draw (6.6,-3.6) to (7,-4) to (6,-5);
		\draw (5,-3) to (5,-5);
		\node[circle,fill, scale=.35] at (5,-5){};
		\node[circle,fill, scale=.35] at (6,-5){};
		\node[circle,fill, scale=.35] at (7,-5){};  
		\draw[decoration={brace,mirror,raise=5pt},decorate]
		(5,-3.05) -- node[left=7pt] {$g_{k,j}$} (5,-4.95);
		\draw (5,-5) to (5.4,-5.4);
		\draw (5.6,-5.6) to (7,-7) to (7,-8);
		\draw (6,-5) to (5,-6) to (5,-7) to (6,-8);
		\draw (7,-5) to (7,-6) to (6.6,-6.4);
		\draw (6.4,-6.6) to (5.6,-7.4);
		\draw (5.4,-7.6) to (5,-8);
		\node[circle,fill, scale=.35] at (5,-8){};
		\node[circle,fill, scale=.35] at (6,-8){};
		\node[circle,fill, scale=.35] at (7,-8){};  
		\draw[decoration={brace,mirror,raise=5pt},decorate]
		(5,-5.05) -- node[left=7pt] {$\tilde{\sigma}_{i,j^{-1}}$} (5,-7.95);
		\node[above] at (10,0){k};
		\node[above] at (11,0){i};
		\node[above] at (12,0){j};
		\draw (10,0) to (10,-1);
		\draw (11,0) to (11.4,-.4);
		\draw (11.6,-.6) to (12,-1);
		\draw (12,0) to (11,-1);
		\node[circle,fill, scale=.35] at (10,-1){};
		\node[circle,fill, scale=.35] at (11,-1){};
		\node[circle,fill, scale=.35] at (12,-1){};  
		\draw[decoration={brace,mirror,raise=5pt},decorate]
		(10,-.05) -- node[left=7pt] {$\tilde{\sigma}_{i,j}$} (10,-.95);
		\draw (11,-1) to (10,-2) to (10,-4) to (11,-5);
		\draw (10,-1) to (10.4,-1.4);
		\draw (10.6,-1.6) to (11.4,-2.4);
		\draw (11.6,-2.6) to (12,-3) to (10.6,-4.4);
		\draw (10.4,-4.6) to (10,-5);
		\draw (12,-1) to (12,-2) to (11,-3) to (11.4,-3.4);
		\draw (11.6,-3.6) to (12,-4) to (12,-5);
		\node[circle,fill, scale=.35] at (10,-5){};
		\node[circle,fill, scale=.35] at (11,-5){};
		\node[circle,fill, scale=.35] at (12,-5){};  
		\draw[decoration={brace,mirror,raise=5pt},decorate]
		(10,-1.05) -- node[left=7pt] {$g_{k,j}$} (10,-4.95);
		\draw (10,-5) to (10,-6);
		\draw (11,-5) to (12,-6);
		\draw (12,-5) to (11.6,-5.4);
		\draw (11.4,-5.6) to (11,-6);
		\node[circle,fill, scale=.35] at (10,-6){};
		\node[circle,fill, scale=.35] at (11,-6){};
		\node[circle,fill, scale=.35] at (12,-6){};  
		\draw[decoration={brace,mirror,raise=5pt},decorate]
		(10,-5.05) -- node[left=7pt] {$\tilde{\sigma}_{i,j}^{-1}$} (10,-5.95);
		\node at (1,-4.5){$i<j<k$};
		\node at (6,-8.5){$i<k<j$};
		\node at (11,-6.5){$k<i<j$};
	\end{tikzpicture}$$
\end{figure}
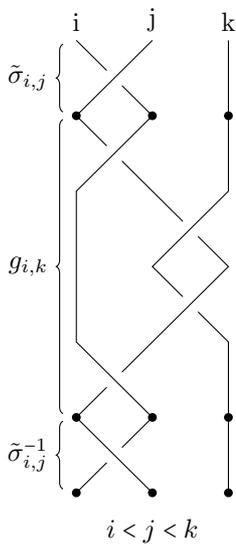
\begin{figure}
	\caption{$\tilde{\sigma}_{i,j}g_{i,k}\tilde{\sigma}_{i,j}^{-1}$}\label{conj full twist intersection i}
	$$\begin{tikzpicture}
		\node[above] at (0,0){i};
		\node[above] at (1,0){j};
		\node[above] at (2,0){k};
		\draw (0,0) to (.4,-.4);
		\draw (.6,-.6) to (1,-1);
		\draw (1,0) to (0,-1);
		\draw (2,0) to (2,-1);
		\node[circle,fill, scale=.35] at (0,-1){};
		\node[circle,fill, scale=.35] at (1,-1){};
		\node[circle,fill, scale=.35] at (2,-1){};  
		\draw[decoration={brace,mirror,raise=5pt},decorate]
		(0,-.05) -- node[left=7pt] {$\tilde{\sigma}_{i,j}$} (0,-.95);
		\draw (1,-1) to (0,-2) to (0,-4) to (1,-5);
		\draw (0,-1) to (.4,-1.4);
		\draw (.6,-1.6) to (1.4,-2.4);
		\draw (1.6,-2.6) to (2,-3) to (.6,-4.4);
		\draw (.4,-4.6) to (0,-5);
		\draw (2,-1) to (2,-2) to (1,-3) to (1.4,-3.4);
		\draw (1.6,-3.6) to (2,-4) to (2,-5);
		\node[circle,fill, scale=.35] at (0,-5){};
		\node[circle,fill, scale=.35] at (1,-5){};
		\node[circle,fill, scale=.35] at (2,-5){};  
		\draw[decoration={brace,mirror,raise=5pt},decorate]
		(0,-1.05) -- node[left=7pt] {$g_{i,k}$} (0,-4.95);
		\draw (2,-5) to (2,-6);
		\draw (1,-5) to (.6,-5.4);
		\draw (.4,-5.6) to (0,-6);
		\draw (0,-5) to (1,-6);
		\node[circle,fill, scale=.35] at (0,-6){};
		\node[circle,fill, scale=.35] at (1,-6){};
		\node[circle,fill, scale=.35] at (2,-6){};  
		\draw[decoration={brace,mirror,raise=5pt},decorate]
		(0,-5.05) -- node[left=7pt] {$\tilde{\sigma}_{i,j}^{-1}$} (0,-5.95);
		\node at (1,-6.5){$i<j<k$};
		\node[above] at (5,0){i};
		\node[above] at (6,0){k};
		\node[above] at (7,0){j};
		\draw (6,0) to (5,-1) to (5,-2) to (6,-3);
		\draw (7,0) to (7,-1) to (5.6,-2.4);
		\draw (5.4,-2.6) to (5,-3);
		\draw (5,0) to (5.4,-.4);
		\draw (5.6,-.6) to (6.4,-1.4);
		\draw (6.6,-1.6) to (7,-2) to (7,-3);
		\node[circle,fill, scale=.35] at (5,-3){};
		\node[circle,fill, scale=.35] at (6,-3){};
		\node[circle,fill, scale=.35] at (7,-3){};  
		\draw[decoration={brace,mirror,raise=5pt},decorate]
		(5,-.05) -- node[left=7pt] {$\tilde{\sigma}_{i,j}$} (5,-2.95);
		\draw (6,-3) to (5,-4) to (5.4,-4.4);
		\draw (5.6,-4.6) to (6,-5);
		\draw (5,-3) to (5.4,-3.4);
		\draw (5.6,-3.6) to (6,-4) to (5,-5);
		\draw (7,-3) to (7,-5);
		\node[circle,fill, scale=.35] at (5,-5){};
		\node[circle,fill, scale=.35] at (6,-5){};
		\node[circle,fill, scale=.35] at (7,-5){};  
		\draw[decoration={brace,mirror,raise=5pt},decorate]
		(5,-3.05) -- node[left=7pt] {$g_{i,k}$} (5,-4.95);
		\draw (6,-5) to (5,-6) to (5,-7) to (6,-8);
		\draw (5,-5) to (5.4,-5.4);
		\draw (5.6,-5.6) to (7,-7) to (7,-8);
		\draw (7,-5) to (7,-6) to (6.6,-6.4);
		\draw (6.4,-6.6) to (5.6,-7.4);
		\draw (5.4,-7.6) to (5,-8);
		\node[circle,fill, scale=.35] at (5,-8){};
		\node[circle,fill, scale=.35] at (6,-8){};
		\node[circle,fill, scale=.35] at (7,-8){};  
		\draw[decoration={brace,mirror,raise=5pt},decorate]
		(5,-5.05) -- node[left=7pt] {$\tilde{\sigma}_{i,j}^{-1}$} (5,-7.95);
		\node at (6,-8.5){$i<k<j$};
		\node[above] at (10,0){k};
		\node[above] at (11,0){i};
		\node[above] at (12,0){j};
		\draw (10,0) to (10,-1);
		\draw (11,0) to (11.4,-.4);
		\draw (11.6,-.6) to (12,-1);
		\draw (12,0) to (11,-1);
		\node[circle,fill, scale=.35] at (10,-1){};
		\node[circle,fill, scale=.35] at (11,-1){};
		\node[circle,fill, scale=.35] at (12,-1){};  
		\draw[decoration={brace,mirror,raise=5pt},decorate]
		(10,-.05) -- node[left=7pt] {$\tilde{\sigma}_{i,j}$} (10,-.95);
		\draw (10,-1) to (10.4,-1.4);
		\draw (10.6,-1.6) to (11,-2) to (10,-3);
		\draw (11,-1) to (10,-2) to (10.4,-2.4);
		\draw (10.6,-2.6) to (11,-3);
		\draw (12,-1) to (12,-3);
		\node[circle,fill, scale=.35] at (10,-3){};
		\node[circle,fill, scale=.35] at (11,-3){};
		\node[circle,fill, scale=.35] at (12,-3){};  
		\draw[decoration={brace,mirror,raise=5pt},decorate]
		(10,-1.05) -- node[left=7pt] {$g_{k,i}$} (10,-2.95);
		\draw (10,-3) to (10,-4);
		\draw (11,-3) to (12,-4);
		\draw (12,-3) to (11.6,-3.4);
		\draw (11.4,-3.6) to (11,-4);
		\node[circle,fill, scale=.35] at (10,-4){};
		\node[circle,fill, scale=.35] at (11,-4){};
		\node[circle,fill, scale=.35] at (12,-4){};  
		\draw[decoration={brace,mirror,raise=5pt},decorate]
		(10,-3.05) -- node[left=7pt] {$\tilde{\sigma}_{i,j}^{-1}$} (10,-3.95);
		\node at (11,-4.5){$k<i<j$};
	\end{tikzpicture}$$
\end{figure}
\begin{proof}
	Since $\mathbb{Z}^{n\choose2}$ is normal in $G_{n}$, $\tilde{\sigma}_{i,j}g_{j,k}\tilde{\sigma}_{i,j}^{-1}\in \mathbb{Z}^{n\choose2}$. Suppose $\max\{i,j\}\in\{k,\ell\}$, without loss of generality assume $j=k$. Then there are three cases: $i<j<k$, $i<k<j$ and $k<i<j$.
		
	By Figure \ref{conj full twist with intersection j}, for both cases $i<j<k$ and $k<i<j$, the winding number of strand $j$ associated to both $i$ and $k$ is zero. Furthermore, in both cases there is a full clockwise twist between the $i^{\text{th}}$ and $j^{\text{th}}$ strands, so the resulting twist for both cases are $g_{i,k}=g_{\sigma_{i,j}(j),\sigma_{i,j}(k)}$. 
		
	If $i<k<j$, notice that the $j^{\text{th}}$ strand passes under the $k^{\text{th}}$ strand twice and over the $i^{\text{th}}$ strand twice. Therefore there is no twist between the $j^{\text{th}}$ strand and either of the other two. Furthermore, the $k^{\text{th}}$ strand passes over the $i^{\text{th}}$ strand, then under it, and over strand $i$ twice more. Therefore there is one full clockwise twist between strands $i$ and $k$. Thus $\tilde{\sigma}_{i,j}\tilde{\sigma}_{j,k}\tilde{\sigma}_{i,j}^{-1}=g_{i,k}$.
		
	Now, suppose $\min\{i,j\}\in\{k,\ell\}$ and without loss of generality assume $i=\ell$. There are the three cases: $i<j<k$, $i<k<j$, and $k<i<j$ as shown in Figure \ref{conj full twist intersection i}. In both cases $i<k<j$ and $k<i<j$ the $i^{\text{th}}$ strand only passes under all other strands, therefore the winding number of any twist involving the $i^{\text{th}}$ strand is zero. If $k<i<j$ then Figure \ref{conj full twist intersection i} shows a clockwise full twist between strands $k$ and $j$. If $i<k<j$, notice the $j^{\text{th}}$ strand passes under the $k^{\text{th}}$ strand twice, then over it, then under again. Therefore strands $k$ and $j$ have the equivalent of a full clockwise twist and the winding number between  the $k^{\text{th}}$ and $j^{\text{th}}$ strands is $1$.
		
	If $i<j<k$, Figure \ref{conj full twist intersection i} shows the $i^{\text{th}}$ and $j^{\text{th}}$ strands have both a full clockwise and counterclockwise twist; therefore the winding number for $g_{i,j}$ is zero. Furthermore, Figure \ref{conj full twist intersection i} shows strands $k$ and $j$ have a positive full twist while strands $i$ and $k$ have a winding number of zero. Thus:
		$$\sigma_{i,j}g_{k,\ell}\sigma_{i,j}=g_{\sigma_{i,j}(k),\sigma_{i,j}(\ell)}$$
	if $|\{i,j\}\cap\{k,\ell\}|=1$.
\end{proof}
\begin{theorem}\label{relation 4}
	Relation R4 holds in $G_{n}$.
\end{theorem}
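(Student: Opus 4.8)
The plan is to derive R4 as a purely algebraic consequence of Lemma \ref{R4}, the squaring relation R2, and the intersection case of R6 recorded in Theorem \ref{thm: conj full by half with intersection}, rather than reading it off a new strand diagram. The point is that lifting the braid relation produces the conjugation $\tilde{\sigma}_{i,j}\tilde{\sigma}_{j,k}\tilde{\sigma}_{i,j}^{-1}$ together with a full-twist correction term, whereas R4 records the cleaner conjugation by the inverse, $\tilde{\sigma}_{i,j}^{-1}\tilde{\sigma}_{j,k}\tilde{\sigma}_{i,j}$; converting between the two is where the work lies. I note first that the admissible orderings in R4, namely $i<k<j$, $j<i<k$, and $k<j<i$, are exactly the hypotheses of Lemma \ref{R4}, so that lemma applies to every case under consideration and the entire argument can proceed uniformly.

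First I would invoke Lemma \ref{R4} to write $\tilde{\sigma}_{i,j}\tilde{\sigma}_{j,k}\tilde{\sigma}_{i,j}^{-1}=g_{i,j}g_{k,j}^{-1}\tilde{\sigma}_{i,k}$ and solve for the middle generator, obtaining $\tilde{\sigma}_{j,k}=\tilde{\sigma}_{i,j}^{-1}g_{i,j}g_{k,j}^{-1}\tilde{\sigma}_{i,k}\tilde{\sigma}_{i,j}$. Substituting this expression into $\tilde{\sigma}_{i,j}^{-1}\tilde{\sigma}_{j,k}\tilde{\sigma}_{i,j}$ merges the outer conjugators into $\tilde{\sigma}_{i,j}^{-2}(\cdots)\tilde{\sigma}_{i,j}^{2}$, and R2 lets me replace $\tilde{\sigma}_{i,j}^{\pm2}$ by $g_{i,j}^{\pm1}$. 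Since the full-twist generators commute by R1, this collapses to $g_{k,j}^{-1}\tilde{\sigma}_{i,k}g_{i,j}$.

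The remaining obstacle is the stray factor $\tilde{\sigma}_{i,k}g_{i,j}$, and this is precisely where the intersection case of R6 is required: because $|\{i,k\}\cap\{i,j\}|=1$, Theorem \ref{thm: conj full by half with intersection} yields $\tilde{\sigma}_{i,k}g_{i,j}\tilde{\sigma}_{i,k}^{-1}=g_{\sigma_{i,k}(i),\sigma_{i,k}(j)}=g_{k,j}$, hence $\tilde{\sigma}_{i,k}g_{i,j}=g_{k,j}\tilde{\sigma}_{i,k}$. Feeding this back in, the correction $g_{k,j}^{-1}$ cancels the newly produced $g_{k,j}$ and leaves exactly $\tilde{\sigma}_{i,k}$, which is R4. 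I expect the only delicate points to be bookkeeping rather than conceptual: confirming that the shared index in the R6 application is $i$, so that $\sigma_{i,k}$ sends the pair $\{i,j\}$ to $\{k,j\}$ while fixing $j$, and verifying that this single chain of equalities is valid for all three admissible orderings at once, since after citing Lemma \ref{R4} none of the subsequent manipulations distinguishes among them.
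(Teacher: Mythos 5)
Your proposal is correct and is essentially the paper's own proof: both arguments derive R4 purely algebraically from Lemma \ref{R4}, the squaring relation R2, and the intersection case of R6 (Theorem \ref{thm: conj full by half with intersection}), the only difference being cosmetic --- you solve Lemma \ref{R4} for $\tilde{\sigma}_{j,k}$ and substitute, sliding $g_{i,j}$ leftward past $\tilde{\sigma}_{i,k}$ to produce $g_{k,j}$, while the paper multiplies both sides of the lemma and slides $g_{k,j}^{-1}$ rightward past $\tilde{\sigma}_{i,k}$ to produce $g_{i,j}^{-1}$, which is the same identity used in the mirror direction. Your bookkeeping (the shared index $i$ in the R6 application, and the observation that the three admissible orderings coincide with the hypotheses of Lemma \ref{R4}, so the computation is uniform) is accurate and complete.
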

\begin{proof}
	By Theorem \ref{thm: conj full by half with intersection}, we have $\tilde{\sigma}_{i,j}\tilde{\sigma}_{j,k}\tilde{\sigma}_{i,j}^{-1}\tilde{\sigma}_{i,k}^{-1}=g_{i,j}g_{k,j}^{-1}$. Multiplying on the right by $\tilde{\sigma}_{i,k}$ and on the left by $g_{i,j}^{-1}$, by relation R1 we have
	\begin{align*}
		\tilde{\sigma}_{i,j}\tilde{\sigma}_{j,k}\tilde{\sigma}_{i,j}^{-1}\tilde{\sigma}_{i,k}^{-1}&=g_{i,j}g_{k,j}^{-1}\\			g_{i,j}^{-1}\tilde{\sigma}_{i,j}\tilde{\sigma}_{j,k}\tilde{\sigma}_{i,j}^{-1}&=g_{k,j}^{-1}\sigma_{i,k}
	\end{align*}
	By applying R2 and Theorem \ref{thm: conj full by half with intersection} we have:
	\begin{align*}
		g_{i,j}^{-1}\tilde{\sigma}_{i,j}\tilde{\sigma}_{j,k}\tilde{\sigma}_{i,j}^{-1}&=g_{k,j}^{-1}\sigma_{i,k}\\
		\tilde{\sigma}_{i,j}^{-2}\tilde{\sigma}_{i,j}\tilde{\sigma}_{k,j}\tilde{\sigma}_{i,j}^{-1}&=\tilde{\sigma}_{i,k}g_{i,j}^{-1}\\
		\tilde{\sigma}_{i,j}^{-1}\tilde{\sigma}_{k,j}\tilde{\sigma}_{i,j}^{-1}g_{i,j}&=\tilde{\sigma}_{i,k}
	\end{align*}
	By R2: $g_{i,j}=\tilde{\sigma}_{i,j}^{2}$. Therefore the theorem and R4 are proven.
\end{proof}
\begin{theorem}\label{thm: commutator of half twists}
	Suppose $|\{i,j\}\cap\{k,\ell\}|\neq 1$. Then
		$$[\tilde{\sigma}_{i,j},\tilde{\sigma}_{k,\ell}]=
		\begin{cases}
			g_{i,k}g_{i,\ell}^{-1}g_{j,k}^{-1}g_{j,\ell}\hspace{1em}&i<k<j<\ell\\
			g_{k,i}^{-1}g_{k,j}g_{i,\ell}g_{\ell,j}^{-1}\hspace{1em}&k<i<\ell<j\\
			1\hspace{1em}&\text{otherwise}
		\end{cases}$$
\end{theorem}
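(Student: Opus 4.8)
The plan is to split into cases according to whether the index pairs are equal, disjoint and non-crossing, or disjoint and crossing, and to reduce the nontrivial cases to winding-number computations via Theorem \ref{thm: strand diagrams}. If $\{i,j\}=\{k,\ell\}$ the commutator is trivially $1$, so I would assume $\{i,j\}\cap\{k,\ell\}=\emptyset$ throughout. In that case $\sigma_{i,j}$ and $\sigma_{k,\ell}$ are disjoint transpositions and hence commute in $S_n$, so $\pi_1([\tilde{\sigma}_{i,j},\tilde{\sigma}_{k,\ell}])=[\sigma_{i,j},\sigma_{k,\ell}]=1$ and the commutator lies in $\ker\pi_1=\iota_1(\mathbb{Z}^{\binom{n}{2}})$. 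By Theorem \ref{thm: strand diagrams} it is therefore determined by the winding numbers of its strand diagram, which reduces the whole statement to a diagram computation.

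For the ``otherwise'' orderings I would avoid drawing anything. Among four distinct indices with $i<j$ and $k<\ell$, the six possible relative orders are $k<\ell<i<j$, $i<j<k<\ell$, $i<k<\ell<j$, $k<i<j<\ell$, $i<k<j<\ell$, and $k<i<\ell<j$. A direct sign check shows that the first four satisfy $(j-k)(j-\ell)(i-k)(i-\ell)>0$, so by the commuting relation in \eqref{braid group} we have $[b_{i,j},b_{k,\ell}]=1$ in $B_n$; since $\tilde{\sigma}_{i,j}$ is the image of $b_{i,j}$ and $G_n$ is a quotient of $B_n$, this descends to $[\tilde{\sigma}_{i,j},\tilde{\sigma}_{k,\ell}]=1$, which is exactly the ``otherwise'' case. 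The remaining two orderings $i<k<j<\ell$ and $k<i<\ell<j$ are precisely those for which that product is negative, i.e. where the two half-twists genuinely link.

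It then remains to treat the two crossing orderings, and here I would exploit the symmetry $[\tilde{\sigma}_{k,\ell},\tilde{\sigma}_{i,j}]=[\tilde{\sigma}_{i,j},\tilde{\sigma}_{k,\ell}]^{-1}$. Relabeling $(i,j,k,\ell)\mapsto(k,\ell,i,j)$ carries the ordering $k<i<\ell<j$ to $i<k<j<\ell$, and since the commutator already lies in the abelian group $\mathbb{Z}^{\binom{n}{2}}$, inverting the first formula $g_{i,k}g_{i,\ell}^{-1}g_{j,k}^{-1}g_{j,\ell}$ and undoing the relabeling yields exactly $g_{k,i}^{-1}g_{k,j}g_{i,\ell}g_{\ell,j}^{-1}$, using $g_{a,b}=g_{b,a}$. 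Thus only the single case $i<k<j<\ell$ needs an explicit argument.

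For that case I would draw the strand diagram of $\tilde{\sigma}_{i,j}\tilde{\sigma}_{k,\ell}\tilde{\sigma}_{i,j}^{-1}\tilde{\sigma}_{k,\ell}^{-1}$, in the style of Figures \ref{R4 diagram}--\ref{conj full twist intersection i}, and read off the winding number of each of the six pairs drawn from $\{i,j,k,\ell\}$. I expect the $\{i,j\}$ and $\{k,\ell\}$ windings to cancel to $0$, while the ``crossing'' pairs $\{i,k\}$ and $\{j,\ell\}$ each contribute $+1$ and the pairs $\{i,\ell\}$ and $\{j,k\}$ each contribute $-1$, producing $g_{i,k}g_{i,\ell}^{-1}g_{j,k}^{-1}g_{j,\ell}$. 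The main obstacle is exactly this bookkeeping: tracking which strand passes over which at each crossing of the four-block diagram, and in particular confirming that the two same-pair windings genuinely vanish rather than combining; this is the only delicate step, and it is where a carefully drawn figure does the real work.
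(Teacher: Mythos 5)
Your proposal is correct and follows essentially the same route as the paper's proof: the non-crossing orderings are dispatched because $(j-k)(j-\ell)(i-k)(i-\ell)>0$ makes $[b_{i,j},b_{k,\ell}]=1$ in \eqref{braid group} and $G_{n}$ is a quotient of $B_{n}$, the case $k<i<\ell<j$ is reduced to $i<k<j<\ell$ via $[\tilde{\sigma}_{i,j},\tilde{\sigma}_{k,\ell}]=[\tilde{\sigma}_{k,\ell},\tilde{\sigma}_{i,j}]^{-1}$, and the remaining case is settled by reading winding numbers off the strand diagram of the commutator exactly as the paper does with Figure \ref{braided commutator}, with your predicted values ($+1$ for $\{i,k\}$ and $\{j,\ell\}$, $-1$ for $\{i,\ell\}$ and $\{j,k\}$, $0$ for $\{i,j\}$ and $\{k,\ell\}$) matching the paper's computation. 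The only cosmetic differences are that you handle all four non-crossing orderings at once by the sign criterion and spell out the relabeling that the paper leaves implicit after treating $j<k$ and $\ell<i$ separately.
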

\begin{figure}
	\caption{$[\tilde{\sigma}_{i,j},\tilde{\sigma}_{k,\ell}]$}\label{braided commutator}
	$$\begin{tikzpicture}
		\node[above] at (0,0){$i$};
		\node[above] at (1,0){$k$};
		\node[above] at (2,0){$j$};
		\node[above] at (3,0){$\ell$};
		\draw (0,0) to (.4,-.4);
		\draw (.6,-.6) to (1.4,-1.4);
		\draw (1.6,-1.6) to (2,-2) to (2,-3);
		\draw (1,0) to (0,-1) to (0,-2) to (1,-3);
		\draw (2,0) to (2,-1) to (.6,-2.4);
		\draw (.4,-2.6) to (0,-3);
		\node[circle, fill, scale=.35] at (0,-3){};
		\node[circle, fill, scale=.35] at (1,-3){};
		\node[circle, fill, scale=.35] at (2,-3){};
		\node[circle, fill, scale=.35] at (3,-3){};
		\draw (3,0) to (3,-3);
		\draw[decoration={brace,mirror,raise=5pt},decorate]
		(0,-.05) -- node[left=7pt] {$\tilde{\sigma}_{i,j}$} (0,-2.95);
		\draw (0,-3) to (0,-6);
		\draw (2,-3) to (1,-4) to (1,-5) to (2,-6);
		\draw (1,-3) to (1.4,-3.4);
		\draw (1.6,-3.6) to (2.4,-4.4);
		\draw (2.6,-4.6) to (3,-5) to (3,-6);
		\draw (3,-3) to (3,-4) to (1.6,-5.4);
		\draw (1.4,-5.6) to (1,-6);
		\node[circle, fill, scale=.35] at (0,-6){};
		\node[circle, fill, scale=.35] at (1,-6){};
		\node[circle, fill, scale=.35] at (2,-6){};
		\node[circle, fill, scale=.35] at (3,-6){};
		\draw (3,0) to (3,-3);
		\draw[decoration={brace,mirror,raise=5pt},decorate]
		(0,-3.05) -- node[left=7pt] {$\tilde{\sigma}_{k,\ell}$} (0,-5.95);
		\draw (0,-6) to (.4,-6.4);
		\draw (.6,-6.6) to (2,-8) to (2,-9);
		\draw (1,-6) to (0,-7) to (0,-8) to (1,-9);
		\draw (2,-6) to (2,-7) to (1.6,-7.4);
		\draw (1.4,-7.6) to (.6,-8.4);
		\draw (.4,-8.6) to (0,-9);
		\draw (3,-6) to (3,-9);
		\node[circle, fill, scale=.35] at (0,-9){};
		\node[circle, fill, scale=.35] at (1,-9){};
		\node[circle, fill, scale=.35] at (2,-9){};
		\node[circle, fill, scale=.35] at (3,-9){};
		\draw (3,0) to (3,-3);
		\draw[decoration={brace,mirror,raise=5pt},decorate]
		(0,-6.05) -- node[left=7pt] {$\tilde{\sigma}_{i,j}^{-1}$} (0,-8.95);
		\draw (0,-9) to (0,-12);
		\draw (1,-9) to (1.4,-9.4);
		\draw (1.6,-9.6) to (3,-11) to (3,-12);
		\draw (2,-9) to (1,-10) to (1,-11) to (2,-12);
		\draw (3,-9) to (3,-10) to (2.6,-10.4);
		\draw (2.4,-10.6) to (1.6,-11.4);
		\draw (1.4,-11.6) to (1,-12);
		\node[circle, fill, scale=.35] at (0,-12){};
		\node[circle, fill, scale=.35] at (1,-12){};
		\node[circle, fill, scale=.35] at (2,-12){};
		\node[circle, fill, scale=.35] at (3,-12){};
		\draw (3,0) to (3,-3);
		\draw[decoration={brace,mirror,raise=5pt},decorate]
		(0,-9.05) -- node[left=7pt] {$\tilde{\sigma}_{k,\ell}^{-1}$} (0,-11.95);
	\end{tikzpicture}$$
\end{figure}
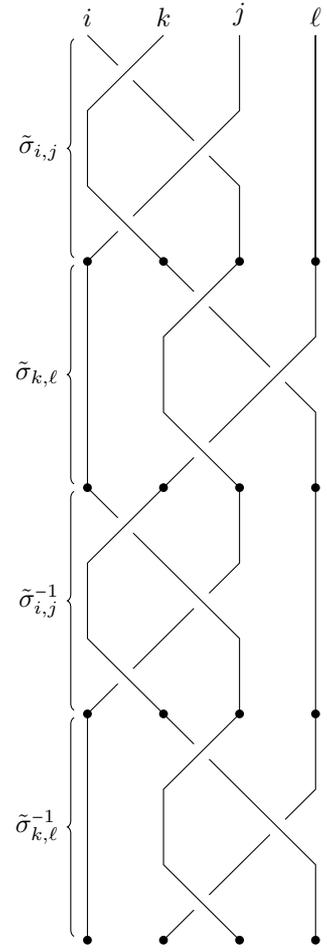
\begin{figure}
	\caption{$\tilde{\sigma}_{i,j}g_{k,\ell}\tilde{\sigma}_{i,j}^{-1}$}\label{conjugation of full by half, braiden no intersection}
	$$\begin{tikzpicture}
		\node[above] at (0,0){$i$};
		\node[above] at (1,0){$k$};
		\node[above] at (2,0){$j$};
		\node[above] at (3,0){$\ell$};
		\draw (0,0) to (.4,-.4);
		\draw (.6,-.6) to (1.4,-1.4);
		\draw (1.6,-1.6) to (2,-2) to (2,-3);
		\draw (1,0) to (0,-1) to (0,-2) to (1,-3);
		\draw (2,0) to (2,-1) to (.6,-2.4);
		\draw (.4,-2.6) to (0,-3);
		\draw (3,0) to (3,-3);
		\node[circle, fill, scale=.35] at (0,-3){};
		\node[circle, fill, scale=.35] at (1,-3){};
		\node[circle, fill, scale=.35] at (2,-3){};
		\node[circle, fill, scale=.35] at (3,-3){};
		\draw (3,0) to (3,-3);
		\draw[decoration={brace,mirror,raise=5pt},decorate]
		(0,-.05) -- node[left=7pt] {$\tilde{\sigma}_{i,j}$} (0,-2.95);
		\draw (1,-3) to (1.4,-3.4);
		\draw (1.6,-3.6) to (2.4,-4.4);
		\draw (2.6,-4.6) to (3,-5) to (1.6,-6.4);
		\draw (1.4,-6.6) to (1,-7);
		\draw (2,-3) to (1,-4) to (1,-6) to (2,-7);
		\draw (3,-3) to (3,-4) to (2,-5) to (2.4,-5.4);
		\draw (2.6,-5.6) to (3,-6) to (3,-7);
		\draw (0,-3) to (0,-7);
		\node[circle, fill, scale=.35] at (0,-7){};
		\node[circle, fill, scale=.35] at (1,-7){};
		\node[circle, fill, scale=.35] at (2,-7){};
		\node[circle, fill, scale=.35] at (3,-7){};
		\draw[decoration={brace,mirror,raise=5pt},decorate]
		(0,-3.05) -- node[left=7pt] {$g_{k,\ell}$} (0,-6.95);
		\draw (1,-7) to (0,-8) to (0,-9) to (1,-10);
		\draw (0,-7) to (.4,-7.4);
		\draw (.6,-7.6) to (2,-9) to (2,-10);
		\draw (2,-7) to (2,-8) to (1.6,-8.4);
		\draw (1.4,-8.6) to (.6,-9.4);
		\draw (.4,-9.6) to (0,-10);
		\draw (3,-7) to (3,-10);
		\node[circle, fill, scale=.35] at (0,-10){};
		\node[circle, fill, scale=.35] at (1,-10){};
		\node[circle, fill, scale=.35] at (2,-10){};
		\node[circle, fill, scale=.35] at (3,-10){};
		\draw[decoration={brace,mirror,raise=5pt},decorate]
		(0,-7.05) -- node[left=7pt] {$\tilde{\sigma}_{i,j}^{-1}$} (0,-9.95);
		
		\node[above] at (6,0){$k$};
		\node[above] at (7,0){$i$};
		\node[above] at (8,0){$\ell$};
		\node[above] at (9,0){$j$};
		\draw (7,0) to (7.4,-.4);
		\draw (7.6,-.6) to (8.4,-1.4);
		\draw (8.6,-1.6) to (9,-2) to (9,-3);
		\draw (6,0) to (6,-3);
		\draw (9,0) to (9,-1) to (7.6,-2.4);
		\draw (7.4,-2.6) to (7,-3);
		\draw (8,0) to (7,-1) to (7,-2) to (8,-3);
		\node[circle, fill, scale=.35] at (6,-3){};
		\node[circle, fill, scale=.35] at (7,-3){};
		\node[circle, fill, scale=.35] at (8,-3){};
		\node[circle, fill, scale=.35] at (9,-3){};
		\draw[decoration={brace,mirror,raise=5pt},decorate]
		(6,-.05) -- node[left=7pt] {$\tilde{\sigma}_{i,j}$} (6,-2.95);
		\draw (7,-3) to (6,-4) to (6,-6) to (7,-7);
		\draw (6,-3) to (6.4,-3.4);
		\draw (6.6,-3.6) to (7.4,-4.4);
		\draw (7.6,-4.6) to (8,-5) to (6.6,-6.4);
		\draw (6.4,-6.6) to (6,-7);
		\draw (8,-3) to (8,-4) to (7,-5) to (7.4,-5.4);
		\draw (7.6,-5.6) to (8,-6) to (8,-7);
		\draw (9,-3) to (9,-7);
		\node[circle, fill, scale=.35] at (6,-7){};
		\node[circle, fill, scale=.35] at (7,-7){};
		\node[circle, fill, scale=.35] at (8,-7){};
		\node[circle, fill, scale=.35] at (9,-7){};
		\draw[decoration={brace,mirror,raise=5pt},decorate]
		(6,-3.05) -- node[left=7pt] {$g_{k,\ell}$} (6,-6.95);
		\draw (6,-7) to (6,-10);
		\draw (7,-7) to (7.4,-7.4);
		\draw (7.6,-7.6) to (9,-9) to (9,-10);
		\draw (8,-7) to (7,-8) to (7,-9) to (8,-10);
		\draw (9,-7) to (9,-8) to (8.6,-8.4);
		\draw (8.4,-8.6) to (7.6,-9.4);
		\draw (7.4,-9.6) to (7,-10);
		\node[circle, fill, scale=.35] at (6,-10){};
		\node[circle, fill, scale=.35] at (7,-10){};
		\node[circle, fill, scale=.35] at (8,-10){};
		\node[circle, fill, scale=.35] at (9,-10){};
		\draw[decoration={brace,mirror,raise=5pt},decorate]
		(6,-7.05) -- node[left=7pt] {$\tilde{\sigma}_{i,j}^{-1}$} (6,-9.95);
	\end{tikzpicture}$$
\end{figure}
\begin{proof}
	Suppose $\{i,j\}\cap\{k,\ell\}=\emptyset$. Without loss of generality, assume $i<j$ and $k<\ell$. If $j<k$ or $\ell<i$ then $[\tilde{\sigma}_{i,j},\tilde{\sigma}_{k,\ell}]=1$ since $G_{n}$ is a quotient of $B_{n}$. Since $[\tilde{\sigma}_{i,j},\tilde{\sigma}_{k,\ell}]=[\tilde{\sigma}_{k,\ell},\tilde{\sigma}_{i,j}]^{-1}$, it suffices to show the cases $i<k<j<\ell$ and $i<k<\ell<j$.
		
	If $i<k<\ell<j$, then $(i-k)(i-\ell)(j-k)(j-\ell)>0$. Therefore $[b_{i,j},b_{k,\ell}]=1$ in \eqref{braid group}. Since $G_{n}$ is a quotient of $B_{n}$, $[\tilde{\sigma}_{i,j},\tilde{\sigma}_{k,\ell}]=1$. Therefore it remains to show the case $i<k<j<\ell$.
		
	Now, suppose $i<k<j<\ell$. By Figure \ref{braided commutator} the $i^{\text{th}}$ strand passes under the $k^{\text{th}}$ strand then over, therefore $g_{i,k}$ has a winding number of $1$. Furthermore strand $i$ passes under the $j^{\text{th}}$ strand twice; also the $i^{\text{th}}$ strand passes over the $\ell^{\text{th}}$ strand once then under it and therefore has a counterclockwise full twist. Therefore $g_{i,j}$ has a winding number of zero and $g_{i,\ell}$ has a winding number $-1$. 
	Now, the $j^{\text{th}}$ strand passes under the strand $k$ then over it, therefore $g_{k,j}$ has a winding number of $-1$. Strand $j$ passes under strand $\ell$ then under it and thus $g_{j,\ell}$ has a winding number of $1$. Finally strand $\ell$ passes over strand $k$ twice, therefore $g_{k,\ell}$ has a winding number of zero. Thus the theorem is proven.
\end{proof}
	
To finish the proof this presentation for $G_{n}$ is correct, it remains to show the second part of R5.
	
\begin{theorem}
	If $|\{i,j\}\cap\{k,\ell\}|\neq 1$, then
		$$\tilde{\sigma}_{i,j}g_{k,\ell}\tilde{\sigma}_{i,j}^{-1}=g_{k,\ell}$$
\end{theorem}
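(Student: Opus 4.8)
The plan is to treat the two possible values of $|\{i,j\}\cap\{k,\ell\}|$ separately. If $|\{i,j\}\cap\{k,\ell\}|=2$ then $\{i,j\}=\{k,\ell\}$, so $g_{k,\ell}=g_{i,j}$, which equals $\tilde{\sigma}_{i,j}^{2}$ by R2; hence $\tilde{\sigma}_{i,j}g_{k,\ell}\tilde{\sigma}_{i,j}^{-1}=\tilde{\sigma}_{i,j}\tilde{\sigma}_{i,j}^{2}\tilde{\sigma}_{i,j}^{-1}=\tilde{\sigma}_{i,j}^{2}=g_{k,\ell}$, which is consistent with R6 since $\sigma_{i,j}$ fixes the set $\{i,j\}$. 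This leaves the case $\{i,j\}\cap\{k,\ell\}=\emptyset$, where I would assume without loss of generality that $i<j$ and $k<\ell$ and split on the relative order of the four indices.

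For every disjoint ordering except the two linked ones, $i<k<j<\ell$ and $k<i<\ell<j$, Theorem \ref{thm: commutator of half twists} already gives $[\tilde{\sigma}_{i,j},\tilde{\sigma}_{k,\ell}]=1$, so $\tilde{\sigma}_{i,j}$ commutes with $g_{k,\ell}=\tilde{\sigma}_{k,\ell}^{2}$ and the identity holds at once. It therefore suffices to handle the two linked configurations, which are precisely the ones drawn in Figure \ref{conjugation of full by half, braiden no intersection}.

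For the linked cases I would sidestep a direct winding-number count and instead reuse the commutator from Theorem \ref{thm: commutator of half twists}. By R2,
$$\tilde{\sigma}_{i,j}g_{k,\ell}\tilde{\sigma}_{i,j}^{-1}=(\tilde{\sigma}_{i,j}\tilde{\sigma}_{k,\ell}\tilde{\sigma}_{i,j}^{-1})^{2}=(c\,\tilde{\sigma}_{k,\ell})^{2},\qquad c:=[\tilde{\sigma}_{i,j},\tilde{\sigma}_{k,\ell}]\in\iota_{1}(\mathbb{Z}^{n\choose2}).$$
Expanding gives $(c\,\tilde{\sigma}_{k,\ell})^{2}=c\,(\tilde{\sigma}_{k,\ell}c\,\tilde{\sigma}_{k,\ell}^{-1})\,\tilde{\sigma}_{k,\ell}^{2}$. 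Every generator occurring in $c$ has index set meeting $\{k,\ell\}$ in exactly one element, so Theorem \ref{thm: conj full by half with intersection} applies factor by factor and shows that conjugating $c$ by $\tilde{\sigma}_{k,\ell}$ permutes its subscripts by the transposition $\sigma_{k,\ell}$. The heart of the argument is the short index check that this permutation inverts $c$: for $i<k<j<\ell$ one has $c=g_{i,k}g_{i,\ell}^{-1}g_{j,k}^{-1}g_{j,\ell}$, and swapping $k\leftrightarrow\ell$ sends this to $g_{i,\ell}g_{i,k}^{-1}g_{j,\ell}^{-1}g_{j,k}=c^{-1}$ since the $g_{\cdot,\cdot}$ commute by R1, with the ordering $k<i<\ell<j$ handled identically. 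Granting $\tilde{\sigma}_{k,\ell}c\,\tilde{\sigma}_{k,\ell}^{-1}=c^{-1}$, the expression collapses to $c\,c^{-1}\tilde{\sigma}_{k,\ell}^{2}=g_{k,\ell}$.

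The only real obstacle is this bookkeeping: verifying $c\mapsto c^{-1}$ in both linked orderings while respecting that each $g_{a,b}$ is indexed by an unordered pair, so that $\sigma_{k,\ell}$ acts on the correct subscripts. A more geometric alternative, matching the style of the preceding proofs, is to use normality of $\iota_{1}(\mathbb{Z}^{n\choose2})$ together with Theorem \ref{thm: strand diagrams} to read the six pairwise winding numbers straight off Figure \ref{conjugation of full by half, braiden no intersection}, confirming that all vanish save the single full twist between strands $k$ and $\ell$; this needs no commutator formula but demands tracking considerably more crossings by hand.
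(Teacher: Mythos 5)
Your proposal is correct, and in the two linked orderings it takes a genuinely different route from the paper. The paper's proof splits the cases the same way you do: the case $\{i,j\}=\{k,\ell\}$ is dispatched by $g_{i,j}=\tilde{\sigma}_{i,j}^{2}$ exactly as in your first paragraph (the paper cites R3 there, though the relation actually used is R2, which you cite correctly), and the unlinked disjoint orderings are handled via the vanishing commutator from R5 --- the paper writes this out as a longer insertion-of-inverses expansion, but it is the same one-line observation you make that $\tilde{\sigma}_{i,j}$ commutes with $\tilde{\sigma}_{k,\ell}$ and hence with its square. The divergence is in the cases $i<k<j<\ell$ and $k<i<\ell<j$: the paper argues geometrically, using normality of $\iota_{1}(\mathbb{Z}^{\binom{n}{2}})$ and Theorem \ref{thm: strand diagrams} to reduce to a winding-number count read off Figure \ref{conjugation of full by half, braiden no intersection}, i.e.\ precisely the ``geometric alternative'' you sketch at the end. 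You instead recycle Theorem \ref{thm: commutator of half twists} and Theorem \ref{thm: conj full by half with intersection}: writing $\tilde{\sigma}_{i,j}g_{k,\ell}\tilde{\sigma}_{i,j}^{-1}=(c\,\tilde{\sigma}_{k,\ell})^{2}=c\,(\tilde{\sigma}_{k,\ell}c\,\tilde{\sigma}_{k,\ell}^{-1})\,g_{k,\ell}$ with $c=[\tilde{\sigma}_{i,j},\tilde{\sigma}_{k,\ell}]$, and checking that the subscript swap $k\leftrightarrow\ell$ inverts $c$. I verified this check in both orderings: for $i<k<j<\ell$ one has $c=g_{i,k}g_{i,\ell}^{-1}g_{j,k}^{-1}g_{j,\ell}$ and the swap yields $g_{i,\ell}g_{i,k}^{-1}g_{j,\ell}^{-1}g_{j,k}=c^{-1}$ by R1, and likewise for $k<i<\ell<j$; each factor of $c$ meets $\{k,\ell\}$ in exactly one index, so Theorem \ref{thm: conj full by half with intersection} applies factor by factor as you claim. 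There is no circularity, since both cited theorems precede this one in the paper and neither depends on it. The trade-off: your argument needs no new diagram (its only diagrammatic input is already packaged in the two earlier theorems) and isolates the pleasant structural fact that conjugation by $\tilde{\sigma}_{k,\ell}$ inverts the commutator $c$, while the paper's winding-number computation is self-contained at the level of strand diagrams and uniform in style with the proofs of Lemma \ref{R4} and Theorem \ref{thm: commutator of half twists}.
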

	
\begin{proof}
	First suppose $\{i,j\}\cap\{k,\ell\}=\{i,j\}$. Then by relation R3 we have
	\begin{align*}
		\tilde{\sigma}_{i,j}g_{k,\ell}\tilde{\sigma}_{i,j}^{-1}&=\tilde{\sigma}_{i,j}\tilde{\sigma}_{i,j}^{2}\tilde{\sigma}_{i,j}^{-1}\\
		&=\tilde{\sigma}_{i,j}^{2}\\
		&=g_{i,j}
	\end{align*}
	Now, suppose $\{i,j\}\cap\{k,\ell\}=\emptyset$. If $i<k<j<\ell$, then by Figure \ref{conjugation of full by half, braiden no intersection}, strands $i$ and $k$ have one full clockwise twist followed by a full counterclockwise twist, therefore the winding number corresponding to $g_{i,k}$ is zero. Also, the $\ell^{\text{th}}$ strand has one full clockwise twist with the $k^{\text{th}}$ strand and does not interact with either of the other strands. Therefore $g_{i\ell}$ and $g_{j,\ell}$ have winding numbers zero while $g_{k,\ell}$ has a winding number of $1$. Furthermore, strand $j$ passes under strand $k$ twice, so $g_{k,j}$ has a corresponding winding number of zero as well. Thus, if $i<k<j<\ell$ then the theorem holds.
		
	Now, suppose $k<i<\ell<j$. By Figure \ref{conjugation of full by half, braiden no intersection} the $i^{\text{th}}$ strand always passes under the other strands, therefore $g_{k,i}$, $g_{i,\ell}$, and $g_{i,j}$ all have winding number zero. Furthermore, the $j^{\text{th}}$ strand passes under the strand $\ell$ twice and over strand $k$ twice, therefore $g_{k,j}$ and $g_{\ell, j}$ also have winding numbers zero. Finally, Figure \ref{conjugation of full by half, braiden no intersection} shows one full clockwise twist between the $k^{\text{th}}$ and $\ell^{\text{th}}$ strands. Therefore $\tilde{\sigma}_{i,j}g_{k,\ell}\tilde{\sigma}_{i,j}^{-1}=g_{k,\ell}$ if $k<i<\ell<j$.
		
	Otherwise $(i-k)(i-\ell)(j-k)(j-\ell)>0$, therefore by R3 and R5 we have:
	\begin{align*}
		\tilde{\sigma}_{i,j}g_{k,\ell}\tilde{\sigma}_{i,j}^{-1}&=\tilde{\sigma}_{i,j}\tilde{\sigma}_{k,\ell}^{2}\tilde{\sigma}_{i,j}^{-1}\\
		&=\tilde{\sigma}_{i,j}\tilde{\sigma}_{k,\ell}\tilde{\sigma}_{i,j}^{-1}\tilde{\sigma}_{i,j}\tilde{\sigma}_{k,\ell}\tilde{\sigma}_{i,j}^{-1}\\
		&=\tilde{\sigma}_{i,j}\tilde{\sigma}_{k,\ell}\tilde{\sigma}_{i,j}^{-1}\tilde{\sigma}_{k,\ell}^{-1}\tilde{\sigma}_{k,\ell}\tilde{\sigma}_{i,j}\tilde{\sigma}_{k,\ell}\tilde{\sigma}_{i,j}^{-1}\\
		&=\tilde{\sigma}_{i,j}\tilde{\sigma}_{k,\ell}\tilde{\sigma}_{i,j}^{-1}\tilde{\sigma}_{k,\ell}^{-1}\tilde{\sigma}_{k,\ell}\tilde{\sigma}_{i,j}\tilde{\sigma}_{k,\ell}\tilde{\sigma}_{i,j}^{-1}\tilde{\sigma}_{k,\ell}^{-1}\tilde{\sigma}_{k,\ell}\\
		&=1\cdot\tilde{\sigma}_{k,\ell}\cdot 1\cdot\tilde{\sigma}_{k,\ell}\\
		&=\tilde{\sigma}_{k,\ell}^{2}\\	
		&=g_{k,\ell}
	\end{align*}
\end{proof}
\subsection{Cohomology class in $H^{2}(S_{n};\mathbb{Z}^{n\choose2})$}
Given the group presentation for $G_{n}$ with relations in Table \ref{Table: Relations of G}, we can determine the cohomology class representing the group extension of $S_{n}$ by $\mathbb{Z}^{n\choose2}$ by Theorem \ref{thm: definition of cohom class}. Recall the algorithm described at the beginning of Section \ref{section on Gn} to determine the chosen normal form of a permutation $p\in S_{n}$ does the following:
\begin{itemize}
	\item Rewrites $p$ as a word in the generators of $S_{n}$ given in \eqref{symmetric group}.
	\item For all $1\leq i<j\leq n$, replaces $\sigma_{i,j}^{2}$ with $1_{S_{n}}$.
	\item If $\{i,j\}\cap\{k,\ell\}=\emptyset$ and $\max\{k,\ell\}<\max\{i,j\}$, replaces $\sigma_{i,j}\sigma_{k,\ell}$ with $\sigma_{k,\ell}\sigma_{i,j}$.
	\item If $j=\max\{i,j,k\}$, replaces $\sigma_{i,j}\sigma_{j,k}$ with $\sigma_{i,k}\sigma_{i,j}$.
	\item If $i=\max\{i,j,k\}$, replaces $\sigma_{i,j}\sigma_{j,k}$ with $\sigma_{j,k}\sigma_{i,k}$.
\end{itemize}
The normal section $s:S_{n}\to G_{n}$ defined on generators of $S_{n}$ by $s(\sigma_{i,j})=\tilde{\sigma}_{i,j}$ determines the lift of permutations in $S_{n}$ by the chosen normal form. Therefore Theorem \ref{thm: definition of cohom class} implies we can compute the image for a cocycle representing the corresponding cohomology class by the relations of $G_{n}$. In particular, we can use the relations in Table \ref{Table: Relations of G}. Since we lift any element of $S_{n}$ by the choice of normal form, we first determine the normal forms for products of two transpositions. 
\begin{lemma}\label{normal form, no intersection}
	Suppose $\{i,j\}\cap\{k,\ell\}=\emptyset$, then 
		$$s(\sigma_{k,\ell}\sigma_{i,j})=s(\sigma_{i,j}\sigma_{k,\ell})=\begin{cases}
			\tilde{\sigma}_{i,j}\tilde{\sigma}_{k,\ell}\hspace{1em}&j<\ell\\
			\tilde{\sigma}_{k,\ell}\tilde{\sigma}_{i,j}\hspace{1em}&\ell<j
		\end{cases}$$
\end{lemma}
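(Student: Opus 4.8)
The plan is to reduce everything to a single computation of a normal form. First I would note that because $\{i,j\}\cap\{k,\ell\}=\emptyset$, the two transpositions commute in $S_n$ by the relation $[\sigma_{i,j},\sigma_{k,\ell}]=1$ of \eqref{symmetric group}, so $\sigma_{i,j}\sigma_{k,\ell}$ and $\sigma_{k,\ell}\sigma_{i,j}$ are the \emph{same} element $p\in S_n$. Since $s$ depends only on $p$ (it sends the chosen normal form of $p$ to the corresponding word in the $\tilde{\sigma}_{a,b}$), the equality $s(\sigma_{k,\ell}\sigma_{i,j})=s(\sigma_{i,j}\sigma_{k,\ell})$ is immediate, and all that remains is to exhibit the normal form of the product of disjoint $2$-cycles $p=(i\,j)(k\,\ell)$.

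For this I would run the factorization algorithm from the start of Section \ref{section on Gn} directly on $p$. The support of $p$ is $\{i,j,k,\ell\}$, so every index exceeding $\max\{j,\ell\}$ is fixed and contributes a trivial factor; the first nontrivial step occurs at the largest index $\max\{j,\ell\}$, which is either $j$ or $\ell$ since the four indices are distinct. I would split into the two cases accordingly. If $\ell<j$, then $j$ is the top index, $p$ sends $j\mapsto i$, so the factor recorded at stage $j$ is $\sigma_{i,j}$; clearing this stage leaves exactly $(k\,\ell)$ untouched, because the cycles are disjoint, and $(k\,\ell)$ has normal form $\sigma_{k,\ell}$ with top index $\ell<j$. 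Reading the factors in order of increasing largest index then gives normal form $\sigma_{k,\ell}\sigma_{i,j}$, hence $s(p)=\tilde{\sigma}_{k,\ell}\tilde{\sigma}_{i,j}$. The case $j<\ell$ is symmetric: the top index is $\ell$, the factor there is $\sigma_{k,\ell}$, the remainder is $(i\,j)$ with top index $j<\ell$, and the normal form is $\sigma_{i,j}\sigma_{k,\ell}$, giving $s(p)=\tilde{\sigma}_{i,j}\tilde{\sigma}_{k,\ell}$.

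There is no genuinely hard step here; the only thing requiring care is the composition convention used by the section (so that ``clearing the top index'' peels off the correct transposition), together with the observation that disjointness is exactly what guarantees the peeled-off remainder is the other transposition with no interference. I would therefore state the convention explicitly and verify the single image $p(\max\{j,\ell\})$ in each case, after which the two-line case analysis above completes the proof.
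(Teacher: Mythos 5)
Your proof is correct and follows essentially the same route as the paper's: both first observe that $\sigma_{i,j}\sigma_{k,\ell}$ and $\sigma_{k,\ell}\sigma_{i,j}$ are the same permutation (so the section values agree), then run the normal-form algorithm from the start of Section \ref{section on Gn} and split on $\max\{j,\ell\}$, using disjointness to see that the remaining factor is the other transposition. If anything, your write-up is more careful than the paper's terse computation, whose final line in the $j<\ell$ case misstates the conclusion as $s(\sigma_{i,j}\sigma_{k,\ell})=\tilde{\sigma}_{k,\ell}\tilde{\sigma}_{i,j}$ where $\tilde{\sigma}_{i,j}\tilde{\sigma}_{k,\ell}$ is clearly intended.
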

\begin{proof}
	Both $\sigma_{k,\ell}\sigma_{i,j}$ and $\sigma_{i,j}\sigma_{k,\ell}$ have the same normal form since they both describe the same permutation. If $\ell<j$ then we take
	$\sigma_{i,j}\sigma_{k,\ell}\sigma_{i,j}\sigma_{k,\ell}=1$ and $\sigma_{i,j}\sigma_{k,\ell}$ becomes $\sigma_{k,\ell}\sigma_{i,j}$. Therefore the normal form of $\sigma_{i,j}\sigma_{k,\ell}$ is the same as the normal form for $\sigma_{k,\ell}\sigma_{i,j}$ and $s(\sigma_{i,j}\sigma_{k,\ell})=\tilde{\sigma}_{k,\ell}\tilde{\sigma}_{i,j}$. Now, suppose $j<\ell$. Then we take the transposition fixing $k$ first, yielding $\sigma_{i,j}\sigma_{k,\ell}\sigma_{k,\ell}\sigma_{i,j}$ and thus $s(\sigma_{i,j}\sigma_{k,\ell})=\tilde{\sigma}_{k,\ell}\tilde{\sigma}_{i,j}$.
\end{proof}
\begin{lemma}\label{lemma: normal form for intersection} 
	The normal form for products of transpositions with intersection are:
		$$s(\sigma_{i,k}\sigma_{i,j})=s(\sigma_{i,j}\sigma_{j,k})=\begin{cases}
			\tilde{\sigma}_{i,k}\tilde{\sigma}_{i,j}\hspace{1em}&\max\{i,k,j\}=j\\
			\tilde{\sigma}_{i,j}\tilde{\sigma}_{j,k}\hspace{1em}&\max\{i,k,j\}=k\\
			\tilde{\sigma}_{k,j}\tilde{\sigma}_{k,i}\hspace{1em}&\max\{i,k,j\}=i
		\end{cases}$$
\end{lemma}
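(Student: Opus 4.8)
The plan is to exploit the fact that the two products in the statement are equal \emph{as elements of} $S_n$, so that only a single normal form must be computed. Rewriting the relation $\sigma_{i,j}\sigma_{j,k}\sigma_{i,j}^{-1}=\sigma_{i,k}$ of the presentation \eqref{symmetric group} and using that $\sigma_{i,j}$ is an involution gives $\sigma_{i,j}\sigma_{j,k}=\sigma_{i,k}\sigma_{i,j}$ in $S_n$. Because the section $s$ assigns to each element of $S_n$ the lift of its chosen normal form, and hence depends only on the underlying permutation rather than on the word representing it, the equality $s(\sigma_{i,k}\sigma_{i,j})=s(\sigma_{i,j}\sigma_{j,k})$ is automatic. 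It therefore remains only to compute $s$ on this one permutation.

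Next I would identify that permutation concretely. With the left-to-right composition convention under which the section's algorithm right-multiplies by $\sigma_{k_n,n}$ to return $n$ to itself, a short check shows $\sigma_{i,k}\sigma_{i,j}$ is the $3$-cycle $\rho$ with $i\mapsto k$, $k\mapsto j$, $j\mapsto i$, fixing all other points. I would then feed $\rho$ into the normal-form algorithm of Section~\ref{section on Gn}, which fixes the largest moved index first. Writing $m=\max\{i,j,k\}$, the rightmost letter of the normal form is $\sigma_{\rho(m),m}$ (a legitimate generator, since $\rho(m)<m$), and $\rho\cdot\sigma_{\rho(m),m}$ fixes $m$. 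Since a $3$-cycle times a transposition fixing one of the three points must act as a transposition on the remaining two, the leftover is a single transposition on the two smaller indices, whose own normal form is itself; thus the algorithm halts after one step and the normal form has exactly two letters.

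Splitting into the cases $m=j$, $m=k$, $m=i$ and reading off $\rho(m)$, namely $i$, $j$, $k$ respectively, then produces the rightmost factors $\sigma_{i,j}$, $\sigma_{j,k}$, $\sigma_{k,i}$ and the leftover transpositions $\sigma_{i,k}$, $\sigma_{i,j}$, $\sigma_{k,j}$, giving the three rows $\tilde{\sigma}_{i,k}\tilde{\sigma}_{i,j}$, $\tilde{\sigma}_{i,j}\tilde{\sigma}_{j,k}$, $\tilde{\sigma}_{k,j}\tilde{\sigma}_{k,i}$ of the statement. The computation is elementary, so I expect the only real obstacle to be bookkeeping: pinning down the composition convention so that ``fix the top point'' corresponds to right-multiplication by the correct transposition, and confirming that after peeling off the top factor the remainder is genuinely a single transposition rather than another $3$-cycle, so that no further reordering is needed.
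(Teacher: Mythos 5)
Your proposal is correct and amounts to exactly the computation the paper leaves out: the paper omits this proof, describing it as a computation similar to that of Lemma \ref{normal form, no intersection}, i.e., observing the two products are the same permutation and then running the normal-form algorithm of Section \ref{section on Gn}, which is precisely what you do. Your extra care in fixing the left-to-right composition convention (so that right-multiplication by $\sigma_{\rho(m),m}$ fixes $m=\max\{i,j,k\}$) and in verifying that the leftover is a single transposition on the two smaller indices supplies the bookkeeping the omitted proof requires, and your three cases reproduce the stated normal forms.
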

The proof of Lemma \ref{lemma: normal form for intersection} is a computation similar to the proof of Lemma \ref{normal form, no intersection} and omitted.
Now that we have the normal forms for products of generators, it is possible to compute a representative for the cohomology class describing $G_{n}$ as an extension of $S_{n}$ by $\mathbb{Z}^{n\choose 2}$.
	
\begin{theorem}\label{thm: cohom of G_{n}}
	Let:
	$$\begin{tikzcd}
			0\arrow{r}&\mathbb{Z}^{n\choose2}\arrow{r}{\iota_{1}}&G_{n}\arrow{r}{\pi_{1}}&S_{n}\arrow{r}&1
		\end{tikzcd}$$
	be the group extension where the action of $S_{n}$ on $\mathbb{Z}^{n\choose2}$ is determined by the conjugation of pure braids by half twists in $B_{n}$. The cocycle, $\phi$, defined by:
	\begin{align*}
		\phi(\tilde{c}_{i,j})&=g_{i,j}\\
		\phi(\tilde{d}_{i,j,k,\ell})&=\begin{cases}
			g_{i,k}-g_{i,\ell}-g_{k,j}+g_{j,\ell}\hspace{1em}&i<k<j<\ell\\
			-g_{i,k}+g_{k,j}+g_{i,\ell}-g_{\ell,j}\hspace{1em}&k<i<\ell<j\\
			0\hspace{1em}&\text{otherwise}
		\end{cases}\\
		\phi(\tilde{e}_{i,k,j})&=\begin{cases}
			g_{i,j}-g_{k,j}\hspace{1em}&i<k<j,\hspace{.5em}j<i<k,\hspace{.5em}k<j<i\\
			0\hspace{1em}&\text{otherwise}
		\end{cases}
	\end{align*}
	is a representative for the cohomology class of $H^{2}(S_{n};\mathbb{Z}^{n\choose2})$ corresponding to this extension.
\end{theorem}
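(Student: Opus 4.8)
The plan is to identify $\phi$ directly as the cocycle $\kappa'$ produced by Theorem \ref{thm: definition of cohom class}, applied to the extension $0\to\mathbb{Z}^{n\choose2}\to G_{n}\to S_{n}\to1$ together with the normalized section $s$ fixed above. Concretely, $\phi=\kappa'$ is computed from the three products
$$\phi(\tilde{c}_{i,j})=s(\sigma_{i,j})s(\sigma_{i,j}),\qquad \phi(\tilde{d}_{i,j,k,\ell})=s(\sigma_{i,j})s(\sigma_{k,\ell})s(\sigma_{i,j}\sigma_{k,\ell})^{-1}-s(\sigma_{k,\ell})s(\sigma_{i,j})s(\sigma_{k,\ell}\sigma_{i,j})^{-1},$$
together with the analogous expression for $\tilde{e}_{i,k,j}$. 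Each indicated product lies in $\iota_{1}(\mathbb{Z}^{n\choose2})$, so the task reduces to rewriting these words in the generators $g_{i,j}$ using the relations of Table \ref{Table: Relations of G}, after first replacing every $s(\sigma_{a,b}\sigma_{c,d})$ by the normal form recorded in the section's algorithm.

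For $\tilde{c}_{i,j}$ the value is immediate: $s(\sigma_{i,j})^{2}=\tilde{\sigma}_{i,j}^{2}=g_{i,j}$ by R2. For $\tilde{d}_{i,j,k,\ell}$ (so $\{i,j\}\cap\{k,\ell\}=\emptyset$) I would substitute the normal form of Lemma \ref{normal form, no intersection}. When $j<\ell$ we have $s(\sigma_{i,j}\sigma_{k,\ell})=\tilde{\sigma}_{i,j}\tilde{\sigma}_{k,\ell}$, so the first product collapses to the identity while the second becomes the commutator $[\tilde{\sigma}_{k,\ell},\tilde{\sigma}_{i,j}]$; when $\ell<j$ the roles of the two products reverse and one is left with $[\tilde{\sigma}_{i,j},\tilde{\sigma}_{k,\ell}]$. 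Applying R5 (Theorem \ref{thm: commutator of half twists}) then evaluates the surviving commutator: in the braided configurations $i<k<j<\ell$ and $k<i<\ell<j$ this yields the stated four-term expressions (using $g_{k,j}=g_{j,k}$ and reading the resulting multiplicative word additively), and in every remaining configuration R5 gives a trivial commutator, so $\phi(\tilde{d}_{i,j,k,\ell})=0$.

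The case $\tilde{e}_{i,k,j}$ is treated the same way using the normal forms of Lemma \ref{lemma: normal form for intersection}: whichever of the two products agrees with the computed normal form collapses to the identity, and the other reduces to the word $\tilde{\sigma}_{i,j}\tilde{\sigma}_{j,k}\tilde{\sigma}_{i,j}^{-1}\tilde{\sigma}_{i,k}^{-1}$ (or its inverse). For the orderings $i<k<j$, $j<i<k$, and $k<j<i$ this word equals $g_{i,j}g_{k,j}^{-1}$ by Lemma \ref{R4}, i.e.\ $g_{i,j}-g_{k,j}$ additively; for the complementary orderings $i<j<k$, $k<i<j$, and $j<k<i$ it equals $1$ by Lemma \ref{R3}, giving $\phi(\tilde{e}_{i,k,j})=0$.

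The routine but delicate part, and the main place to exercise care, is the bookkeeping of the case analysis: for each ordering of the indices one must select the correct branch of the normal-form lemmas, determine which of the two products vanishes, identify precisely which variant of R5 or of Lemmas \ref{R3} and \ref{R4} applies, and then translate the resulting multiplicative word in the $g$'s (with its inverses and with $g_{a,b}=g_{b,a}$) into the additive expression claimed. No genuinely new idea is needed beyond Theorem \ref{thm: definition of cohom class} and the relations already established; the entire content lies in verifying that these orderings line up consistently across all branches.
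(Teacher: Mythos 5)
Your route is the paper's route: evaluate $\phi=\kappa'$ via Theorem \ref{thm: definition of cohom class} with the fixed normal section, substitute the normal forms of Lemmas \ref{normal form, no intersection} and \ref{lemma: normal form for intersection}, and reduce the surviving kernel elements using the relations of Table \ref{Table: Relations of G}. Your $\tilde{c}_{i,j}$ and $\tilde{d}_{i,j,k,\ell}$ cases are correct as described (for $\tilde{d}$, since the indices are disjoint, the normal form really does coincide verbatim with one of the two products, and the leftover is a commutator killed or evaluated by R5; just keep track that when $j<\ell$ the surviving term carries a minus sign, so it is $-[\tilde{\sigma}_{k,\ell},\tilde{\sigma}_{i,j}]$ that you feed into R5). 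However, there is a concrete misstep in the $\tilde{e}_{i,k,j}$ case: your claim that ``whichever of the two products agrees with the computed normal form collapses to the identity'' fails when $\max\{i,k,j\}=i$, i.e.\ in the orderings $j<k<i$ and $k<j<i$. There Lemma \ref{lemma: normal form for intersection} gives the normal form $\tilde{\sigma}_{k,j}\tilde{\sigma}_{k,i}$, which matches neither $\tilde{\sigma}_{i,j}\tilde{\sigma}_{j,k}$ nor $\tilde{\sigma}_{i,k}\tilde{\sigma}_{i,j}$, so \emph{neither} summand is trivial. This is precisely why the paper's proof splits into three cases by $\max\{i,k,j\}$ and, in the $\max=i$ case, computes the two kernel elements $\tilde{\sigma}_{j,i}\tilde{\sigma}_{j,k}\tilde{\sigma}_{k,i}^{-1}\tilde{\sigma}_{j,k}^{-1}$ and $\tilde{\sigma}_{k,i}\tilde{\sigma}_{j,i}\tilde{\sigma}_{k,i}^{-1}\tilde{\sigma}_{j,k}^{-1}$ separately (via R3, R4 and Theorem \ref{thm: conj full by half with intersection}) and subtracts, getting $g_{k,i}g_{k,j}^{-1}-g_{k,i}g_{j,i}^{-1}$, i.e.\ $g_{j,i}-g_{k,j}$, for $k<j<i$, and $0$ for $j<k<i$.

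Your final answer nonetheless survives, and the cheapest repair is a telescoping observation you did not state: writing the two summands as $A\,s(\sigma_{i,j}\sigma_{j,k})^{-1}$ and $B\,s(\sigma_{i,k}\sigma_{i,j})^{-1}$ with $A=\tilde{\sigma}_{i,j}\tilde{\sigma}_{j,k}$ and $B=\tilde{\sigma}_{i,k}\tilde{\sigma}_{i,j}$, both lie in the abelian kernel and the two section factors are equal, so the difference is represented multiplicatively by $AB^{-1}=\tilde{\sigma}_{i,j}\tilde{\sigma}_{j,k}\tilde{\sigma}_{i,j}^{-1}\tilde{\sigma}_{i,k}^{-1}$ \emph{independently of which normal form was chosen}. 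With that in hand, your one-line finish --- Lemma \ref{R4} gives $g_{i,j}g_{k,j}^{-1}$ in the orderings $i<k<j$, $j<i<k$, $k<j<i$, and Lemma \ref{R3} gives triviality in the complementary orderings --- is valid and is in fact shorter than the paper's computation, which re-derives the value of this word in each branch from R2--R4 and Theorem \ref{thm: conj full by half with intersection} rather than quoting Lemma \ref{R4} directly. So: fix the collapse claim (or replace it by the telescoping identity), and the proof is complete and slightly streamlines the paper's.
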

Before we prove Theorem \ref{thm: cohom of G_{n}}, note that the image of our cocycle is the abelian group $\mathbb{Z}^{n\choose2}$ with additive notation. However the computations to determine elements of $\mathbb{Z}^{n\choose 2}$ is done within the extension $G_{n}$ using multiplicative notation.
\begin{proof}
	First, by Theorem \ref{thm: definition of cohom class} we have
	\begin{align*}
		\phi(\tilde{c}_{i,j})&=s(\sigma_{i,j})s(\sigma_{i,j})\\
		&=\tilde{\sigma}_{i,j}\tilde{\sigma}_{i,j}\\
		&=g_{i,j}
	\end{align*}
	Now, consider $\tilde{d}_{i,j,k,\ell}$, without loss of generality assume $i<j$ and $k<\ell$. By Lemma 4.8 $s(\sigma_{k,\ell}\sigma_{i,j})=s(\sigma_{i,j}\sigma_{k,\ell})$ depends on $\max\{j,\ell\}$. Suppose $j<\ell$, then by Lemma \ref{normal form, no intersection}, $s(\sigma_{k,\ell}\sigma_{i,j})=\tilde{\sigma}_{i,j}\tilde{\sigma}_{k,\ell}$. Therefore by Theorem \ref{thm: definition of cohom class} and relation R5
	\begin{align*}
		\phi(\tilde{d}_{i,j,k,\ell})&=\tilde{\sigma}_{i,j}\tilde{\sigma}_{k,\ell}(\tilde{\sigma}_{i,j}\tilde{\sigma}_{k,\ell})^{-1}-\tilde{\sigma}_{k,\ell}\tilde{\sigma}_{i,j}(\tilde{\sigma}_{i,j}\tilde{\sigma}_{k,\ell})^{-1}\\
		&=-\tilde{\sigma}_{k,\ell}\tilde{\sigma}_{i,j}\tilde{\sigma}_{k,\ell}^{-1}\tilde{\sigma}_{i,j}^{-1}\\
		&=-\begin{cases}
			g_{i,k}^{-1}g_{i,\ell}g_{k,j}g_{j,\ell}^{-1}\hspace{1em}&i<k<j<\ell\\
				0\hspace{1em}&\text{otherwise}
		\end{cases}\\
		&=-\begin{cases}
			-g_{i,k}+g_{i,\ell}+g_{k,j}-g_{j,\ell}\hspace{1em}&i<k<j<\ell\\
			0\hspace{1em}&\text{otherwise}
		\end{cases}\\
		&=\begin{cases}
			g_{i,k}-g_{i,\ell}-g_{k,j}+g_{j,\ell}\hspace{1em}&i<k<j<\ell\\
			0\hspace{1em}&\text{otherwise}
		\end{cases}
	\end{align*}
	Thus for $\phi(\tilde{d}_{i,j,k,\ell})$ it remains to show the case $\ell<j$. By Lemma \ref{normal form, no intersection}, Theorem \ref{thm: definition of cohom class}, and R5
	\begin{align*}
		\phi(\tilde{d}_{i,j,k,\ell})&=\tilde{\sigma}_{i,j}\tilde{\sigma}_{k,\ell}(\tilde{\sigma}_{k,\ell}\tilde{\sigma}_{i,j})^{-1}-\tilde{\sigma}_{k,\ell}\tilde{\sigma}_{i,j}(\tilde{\sigma}_{k,\ell}\tilde{\sigma}_{i,j})^{-1}\\
		&=\tilde{\sigma}_{i,j}\tilde{\sigma}_{k,\ell}\tilde{\sigma}_{i,j}^{-1}\tilde{\sigma}_{k,\ell}^{-1}\\
		&=\begin{cases}
			g_{k,i}^{-1}g_{k,j}g_{i,\ell}g_{\ell,j}^{-1}\hspace{1em}&k<i<\ell<j\\
			0\hspace{1em}&\text{otherwise}
		\end{cases}\\
		&=\begin{cases}
			-g_{k,i}+g_{k,j}+g_{i,\ell}-g_{\ell,j}\hspace{1em}&k<i<\ell<j\\
			0\hspace{1em}&\text{otherwise}
		\end{cases}
	\end{align*}
	Thus it remains to compute the image of $\tilde{e}_{i,k,j}$. By Lemma \ref{lemma: normal form for intersection} there are three cases for the normal form of the permutation $\sigma_{i,k}\sigma_{i,j}=\sigma_{i,j}\sigma_{j,k}$ depending on $\max\{i,k,j\}$. Suppose $\max\{i,k,j\}=j$, then by Theorem \ref{thm: definition of cohom class} and Lemma \ref{lemma: normal form for intersection} we have:
	\begin{align*}
		\phi(\tilde{e}_{i,k,j})&=s(\sigma_{i,j})s(\sigma_{j,k})s(\sigma_{i,j}\sigma_{k,j})^{-1}-s(\sigma_{i,k})s(\sigma_{i,j})s(\sigma_{i,k}\sigma_{i,j})^{-1}\\
		&=\tilde{\sigma}_{i,j}\tilde{\sigma}_{k,j}(\tilde{\sigma}_{i,k}\tilde{\sigma}_{i,j})^{-1}-\tilde{\sigma}_{i,k}\tilde{\sigma}_{i,j}(\tilde{\sigma}_{i,k}\tilde{\sigma}_{i,j})^{-1}\\
		&=\tilde{\sigma}_{i,j}\tilde{\sigma}_{k,j}\tilde{\sigma}_{i,j}^{-1}\tilde{\sigma}_{i,k}^{-1}
	\end{align*}
	By relations R3 and R4 together with Theorem \ref{thm: conj full by half with intersection} we get:
	\begin{align*}
		\phi(\tilde{e}_{i,k,j})&=\tilde{\sigma}_{i,j}\tilde{\sigma}_{k,j}\tilde{\sigma}_{i,j}^{-1}\tilde{\sigma}_{i,k}^{-1}\\
		&=\begin{cases}
			\tilde{\sigma}_{i,k}\tilde{\sigma}_{i,k}^{-1}\hspace{1em}&k<i<j\\
			g_{i,j}\tilde{\sigma}_{i,j}^{-1}\tilde{\sigma}_{k,j}\tilde{\sigma}_{i,j}g_{i,j}^{-1}\tilde{\sigma}_{i,k}^{-1}\hspace{1em}&i<k<j
		\end{cases}\\
		&=\begin{cases}
			0\hspace{1em}&k<i<j\\
			g_{i,j}\tilde{\sigma}_{i,k}g_{i,j}^{-1}\tilde{\sigma}_{i,k}^{-1}\hspace{1em}&i<k<j
		\end{cases}\\
		&=\begin{cases}
			0\hspace{1em}&k<i<j\\
			g_{i,j}g_{k,j}^{-1}\hspace{1em}&i<k<j
		\end{cases}\\
		&=\begin{cases}
			0\hspace{1em}&k<i<j\\
			g_{i,j}-g_{k,j}\hspace{1em}&i<k<j
		\end{cases}
	\end{align*}
	Hence we are done if $\max\{i,k,j\}=j$.
		
	Now, let $\max\{i,k,j\}=k$, then by Lemma \ref{lemma: normal form for intersection} $s(\sigma_{i,k}\sigma_{i,j})=\tilde{\sigma}_{i,j}\tilde{\sigma}_{j,k}$. Therefore, as above we have
	\begin{align*}
		\phi(\tilde{e}_{i,k,j})&=\tilde{\sigma}_{i,j}\tilde{\sigma}_{j,k}(\tilde{\sigma}_{i,j}\tilde{\sigma}_{j,k})^{-1}-\tilde{\sigma}_{i,k}\tilde{\sigma}_{i,j}(\tilde{\sigma}_{i,j}\tilde{\sigma}_{j,k})^{-1}\\
		&=-\tilde{\sigma}_{i,k}\tilde{\sigma}_{i,j}\tilde{\sigma}_{j,k}^{-1}\tilde{\sigma}_{i,j}^{-1}
	\end{align*}
	Now, if $i<j<k$ then R3 applies to $\tilde{\sigma}_{i,j}\tilde{\sigma}_{j,k}^{-1}\tilde{\sigma}_{i,j}^{-1}$ and if $j<i<k$ then R4 applies:
	\begin{align*}
		\phi(\tilde{e}_{i,k,j})&=-\begin{cases}
			\tilde{\sigma}_{i,k}\tilde{\sigma}_{i,k}^{-1}\hspace{1em}&i<j<k\\
			\tilde{\sigma}_{i,k}g_{i,j}\tilde{\sigma}_{i,j}^{-1}\tilde{\sigma}_{j,k}^{-1}\tilde{\sigma}_{i,j}g_{i,j}^{-1}\hspace{1em}&j<i<k
			\end{cases}\\
		&=-\begin{cases}
			0\hspace{1em}&i<j<k\\
			\tilde{\sigma}_{i,k}g_{i,j}\tilde{\sigma}_{i,k}^{-1}g_{i,j}^{-1}\hspace{1em}&j<i<k
		\end{cases}\\
		&=-\begin{cases}
			0\hspace{1em}&i<j<k\\
			g_{i,j}^{-1}g_{k,j}\hspace{1em}&j<i<k
		\end{cases}\\
		&=\begin{cases}
			0\hspace{1em}&i<j<k\\
			g_{i,j}-g_{k,j}\hspace{1em}&j<i<k
		\end{cases}
	\end{align*}
		
	It remains to prove the result if $\max\{i,k,j\}=i$. By Lemma \ref{lemma: normal form for intersection}, $\max\{i,k,j\}=i$ implies $s(\sigma_{i,j}\sigma_{i,j})=\tilde{\sigma}_{k,j}\tilde{\sigma}_{k,i}$. Thus
	\begin{align*}
		\phi(\tilde{e}_{i,k,j})&=\tilde{\sigma}_{j,i}\tilde{\sigma}_{j,k}(\tilde{\sigma}_{j,k}\tilde{\sigma}_{k,i})^{-1}-\tilde{\sigma}_{k,i}\tilde{\sigma}_{j,i}(\tilde{\sigma}_{j,k}\tilde{\sigma}_{k,i})^{-1}\\
		&=\tilde{\sigma}_{j,i}\tilde{\sigma}_{j,k}\tilde{\sigma}_{k,i}^{-1}\tilde{\sigma}_{j,k}^{-1}-\tilde{\sigma}_{k,i}\tilde{\sigma}_{i,j}\tilde{\sigma}_{k,i}^{-1}\tilde{\sigma}_{j,k}^{-1}
	\end{align*}
	If $j<k<i$, then relation R3 applies to $\tilde{\sigma}_{j,k}\tilde{\sigma}_{k,i}^{-1}\tilde{\sigma}_{j,k}^{-1}$ while R4 applies if $k<j<i$. Therefore,
	\begin{align*}
		\tilde{\sigma}_{j,i}\tilde{\sigma}_{j,k}\tilde{\sigma}_{k,i}^{-1}\tilde{\sigma}_{j,k}^{-1}&=\begin{cases}
			\tilde{\sigma}_{j,i}\tilde{\sigma}_{j,i}^{-1}\hspace{1em}&j<k<i\\
			\tilde{\sigma}_{j,i}g_{k,j}\tilde{\sigma}_{k,j}^{-1}\tilde{\sigma}_{k,i}^{-1}\tilde{\sigma}_{k,j}g_{k,j}^{-1}\hspace{1em}&k<j<i
			\end{cases}\\
		&=\begin{cases}
			0\hspace{1em}&j<k<i\\
			\tilde{\sigma}_{j,i}g_{k,j}\tilde{\sigma}_{j,i}^{-1}g_{k,j}^{-1}\hspace{1em}&k<j<i
		\end{cases}\\
		&=\begin{cases}
			0\hspace{1em}&j<k<i\\
			g_{k,i}g_{k,j}^{-1}\hspace{1em}&k<j<i
		\end{cases}
	\end{align*}
	Furthermore, if $j<k<i$ then R3 applies to $\tilde{\sigma}_{k,i}\tilde{\sigma}_{j,i}\tilde{\sigma}_{k,i}^{-1}$ while R4 applies if $k<j<i$. Hence
	\begin{align*}
		\tilde{\sigma}_{k,i}\tilde{\sigma}_{j,i}\tilde{\sigma}_{k,i}^{-1}\tilde{\sigma}_{j,k}^{-1}
		&=\begin{cases}
			\tilde{\sigma}_{j,k}\tilde{\sigma}_{j,k}^{-1}\hspace{1em}&j<k<i\\
			g_{k,i}\tilde{\sigma}_{k,i}^{-1}\tilde{\sigma}_{j,i}\tilde{\sigma}_{k,i}g_{k,i}^{-1}\tilde{\sigma}_{k,j}^{-1}\hspace{1em}&k<j<i
		\end{cases}\\
		&=\begin{cases}
			0\hspace{1em}&j<k<i\\
			g_{k,i}\tilde{\sigma}_{k,j}g_{k,i}^{-1}\tilde{\sigma}_{k,j}^{-1}\hspace{1em}&k<j<i
		\end{cases}\\
		&=\begin{cases}
			0\hspace{1em}&j<k<i\\
			g_{k,i}g_{j,i}^{-1}\hspace{1em}&k<j<i
		\end{cases}
	\end{align*}
	Therefore:
	\begin{align*}
		\phi(\tilde{e}_{i,k,j})&=
		\begin{cases}
			0-0\hspace{1em}&j<k<i\\
			g_{k,i}g_{k,j}^{-1}-g_{k,i}g_{j,i}^{-1}\hspace{1em}&k<j<i
		\end{cases}\\
		&=\begin{cases}
			0\hspace{1em}&j<k<i\\
			g_{k,i}-g_{k,j}-(g_{k,i}-g_{j,i})\hspace{1em}&k<j<i\\
		\end{cases}\\
		&=\begin{cases}
			0\hspace{1em}&j<k<i\\
			g_{j,i}-g_{k,j}\hspace{1em}&k<j<i
		\end{cases}
	\end{align*}
\end{proof}
\subsection{Order of $[\phi]$}
To determine the order of $[\phi]$ we build the extensions $G_{n}^{t}$ of $S_{n}$ by $\mathbb{Z}^{n\choose 2}$ which correspond to the class of $t\cdot [\phi]$ and show that if $t\in 2\mathbb{Z}$ then $G_{n}^{t}$ is a split extension. To construct extensions corresponding to multiples of $[\phi]$ consider Theorem \ref{thm: definition of cohom class}. Notice the image of $\phi$ is determined by lifting the relations of $S_{n}$ to relations in $G_{n}$ since we are using the resolution from the universal cover of a $\mathcal{K}(S_{n},1)$-space. 
\paragraph{Remark}
Recall from section \ref{group cohomology}, the extension corresponding to a cohomology class $[\theta]$ is determined by $E_{\theta}=\mathbb{Z}^{n\choose2}\ltimes_{\theta}S_{n}$ where multiplication of group elements is: 
$$(a,g)\cdot (b,h)=(a+g\cdot b+\theta(g,h),gh)$$
To prove $G_{n}^{t}$ is an extension of $S_{n}$ by $\mathbb{Z}_{2}^{n\choose2}$ corresponding to $t\cdot \phi$, it suffices to show $G_{n}^{t}\approx E_{t\cdot\phi}$. The following Lemma proves this isomorphism.
\begin{table}
	\begin{center}
		\caption{Relations of $G_{n}^{t}$}\label{Table: Relations of G^t}
		\vspace{1em}
		\begin{tabular}{lcl}
			
			\vspace{.5em}
			$R^{t}1$:&$[g_{i,j},g_{k,\ell}]=1$& for all $i,j,k,\ell$\\
			
			\vspace{.5em}
			$R^{t}2$:& $\tilde{\sigma}_{i,j}^{2}=g_{i,j}^{t}$ &for all $i$\\
				
			\vspace{1em}
			$R^{t}3$:& 
			$\tilde{\sigma}_{i,j}\tilde{\sigma}_{k,j}\tilde{\sigma}_{i,j}^{-1}=\tilde{\sigma}_{i,k}$
			&if\hspace{1em}$k<i<j;\hspace{.5em}i<j<k;\hspace{.5em}\text{or}\hspace{.5em}j<k<i$\\
				
			\vspace{.5em}
			$R^{t}4$:& $\tilde{\sigma}_{i,j}^{-1}\tilde{\sigma}_{j,k}\tilde{\sigma}_{i,j}=\tilde{\sigma}_{i,k}$ & 
			if\hspace{1em}$i<k<j;\hspace{.5em}j<i<k;\hspace{.5em}\text{or}\hspace{.5em}k<j<i$\\
				
			\vspace{.5em}
			$R^{t}5$:&$[\tilde{\sigma}_{i,j},\tilde{\sigma}_{k,\ell}]=\begin{cases}
				g_{i,k}^{t}g_{i,\ell}^{-t}g_{j,k}^{-t}g_{j,\ell}^{t}\\
				g_{k,i}^{-t}g_{k,j}^{t}g_{i,\ell}^{t}g_{\ell,j}^{-t}\\
				1
			\end{cases}$ & $\begin{aligned}
					i<k<j<\ell\\
					k<i<\ell<j\\
					\text{otherwise}
				\end{aligned}$\\
				
			\vspace{.5em}
			$R^{t}6$:&$\tilde{\sigma}_{i,j}g_{k,\ell}\tilde{\sigma}_{i,j}^{-1}=
			g_{\sigma_{i,j}(k),\sigma_{i,j}(\ell)}$&for all $i,j,k,\ell\in\{1,\ldots,n\}$\\
		\end{tabular}\\
	\end{center}
\end{table}
\begin{lemma}\label{lemma: cohom class t phi}
	Let $G_{n}^{t}$ be the group with generating set $\{\tilde{\sigma}_{i,j}\}\cup\{g_{k,\ell}\}$ for all $i,j,k,\ell$ such that $1\leq i<j\leq n$ and $1\leq k<\ell\leq n$ with relations given in Table \ref{Table: Relations of G^t}. Then $G_{n}^{t}$ is the extension of $S_{n}$ by $\mathbb{Z}^{n\choose2}$ corresponding to the cohomology class of $t\cdot [\phi]\in H_{}^{2}(S_{n};\mathbb{Z}^{n\choose2})$. 
\end{lemma}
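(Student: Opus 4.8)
The plan is to reduce the statement to matching two group presentations. Since $\phi$ is a $2$-cocycle (Theorem \ref{thm: cohom of G_{n}}) and $H^{2}(S_{n};\mathbb{Z}^{n\choose 2})$ is an abelian group, the scalar multiple $t\phi$ is again a cocycle for the same action $\theta$; by Theorem \ref{Classification of groups by cocycles} it classifies a genuine extension $E_{t}$ of $S_{n}$ by $\mathbb{Z}^{n\choose 2}$ giving rise to $\theta$. It therefore suffices to show that $E_{t}$ admits the presentation of Table \ref{Table: Relations of G^t}, for then $G_{n}^{t}\cong E_{t}$ is precisely the extension classified by $t\cdot[\phi]$.

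To present $E_{t}$ I would apply the construction of Section \ref{section on building presentations}, taking $K=\mathbb{Z}^{n\choose 2}=\langle\{g_{i,j}\}\mid R^{t}1\rangle$, $Q=S_{n}$ as in \eqref{symmetric group}, and the normalized section $s$ with $s(\sigma_{i,j})=\tilde\sigma_{i,j}$. The conjugation relations $R'$ are dictated by the action $\theta$; since $\theta$ is the standard permutation action on unordered pairs, which is independent of $t$, they coincide verbatim with $R^{t}6$ (equivalently with $R6$). The lifted relations $R_{Q}$ are then determined by the values of the classifying cocycle of $E_{t}$ on the generators $\tilde c_{i,j}$, $\tilde d_{i,j,k,\ell}$, $\tilde e_{i,k,j}$ of $R_{2}$ through the formulas of Theorem \ref{thm: definition of cohom class}.

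The heart of the argument is that these cocycle values are exactly $t$ times those of $G_{n}$. I would rerun the computation of Theorem \ref{thm: cohom of G_{n}} verbatim, with $R2$ and $R5$ replaced by $R^{t}2$ and $R^{t}5$: every value there is assembled from the correction terms furnished by $R2$ (namely $\tilde\sigma_{i,j}^{2}=g_{i,j}$) and by $R5$, each possibly conjugated by a half-twist through the action. Replacing each correction $g_{i,j}$ by $g_{i,j}^{t}$ while leaving the action unchanged multiplies every term by $t$, and since the action is $\mathbb{Z}$-linear on $\mathbb{Z}^{n\choose 2}$ one gets $\phi_{t}(\tilde c_{i,j})=t\,\phi(\tilde c_{i,j})$, $\phi_{t}(\tilde d_{i,j,k,\ell})=t\,\phi(\tilde d_{i,j,k,\ell})$, and $\phi_{t}(\tilde e_{i,k,j})=t\,\phi(\tilde e_{i,k,j})$. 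Translating these back into relations reproduces $R^{t}2$ and $R^{t}5$ for the squaring and commuting relations; for the braid relation one checks, exactly as in Theorem \ref{relation 4}, that the scaled lift $\tilde\sigma_{i,j}\tilde\sigma_{j,k}\tilde\sigma_{i,j}^{-1}\tilde\sigma_{i,k}^{-1}=g_{i,j}^{t}g_{k,j}^{-t}$ is equivalent, modulo $R^{t}1$, $R^{t}2$, and $R^{t}6$, to the clean relation $R^{t}4$ (and similarly $R^{t}3$), the power $t$ in $R^{t}2$ precisely compensating the power $t$ in the correction. This identifies the presentation of $E_{t}$ with Table \ref{Table: Relations of G^t}.

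The main obstacle is the consistency of the modified presentation: a priori the rescaled relations $R^{t}2$ and $R^{t}5$, combined with the unchanged clean relations $R^{t}3$ and $R^{t}4$, could force hidden relations among the $g_{i,j}$ and collapse part of $\mathbb{Z}^{n\choose 2}$, breaking injectivity of $\iota'$. Unlike the case $t=1$, the group $G_{n}^{t}$ has no braid-group model, so the strand-diagram computations of Theorems \ref{thm: conj full by half with intersection} and \ref{thm: commutator of half twists} are unavailable to certify this by hand. Routing the argument through the Eilenberg--MacLane extension $E_{t}$ sidesteps the issue entirely: $E_{t}$ is a bona fide extension by construction, so $\iota'$ is injective there, and the presentation match above then yields $G_{n}^{t}\cong E_{t}$ with classifying class $t\cdot[\phi]$.
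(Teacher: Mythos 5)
Your proposal is correct, but it runs in the opposite direction from the paper's proof, and the difference is substantive. The paper works directly with the group $G_{n}^{t}$ presented by Table \ref{Table: Relations of G^t}: it defines $\iota_{t}$ and $\pi_{t}$ on generators, asserts that $\iota_{t}$ is injective ``since $g_{i,j}$ is a nontrivial generator of $G_{n}^{t}$'' and that $\ker\pi_{t}=\im\iota_{t}$ ``by definition,'' and then evaluates the section cocycle on $\tilde{c}_{i,j}$, $\tilde{d}_{i,j,k,\ell}$, $\tilde{e}_{i,k,j}$ using $R^{t}2$--$R^{t}5$, obtaining $t\cdot\phi$ --- so your third paragraph, rerunning Theorem \ref{thm: cohom of G_{n}} with the correction terms scaled by $t$, is essentially the entirety of the paper's computation. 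What you do differently is to anchor the argument at the extension $E_{t}$ furnished by Theorem \ref{Classification of groups by cocycles} for the class $t[\phi]$, and then present $E_{t}$ using the machinery of Section \ref{section on building presentations}, where Theorems \ref{thm: presentation is exact} and \ref{thm: building a presentation} certify injectivity and exactness. That purchase is real: the paper's injectivity claim is exactly the point you flag in your last paragraph --- being a named generator does not rule out the relations of Table \ref{Table: Relations of G^t} collapsing part of $\mathbb{Z}^{\binom{n}{2}}$, and for general $t$ there is no braid-group model available to certify this with strand diagrams, which is presumably why the paper's own remark following the lemma gestures at a Baer-sum-style construction as the ``more formal'' alternative; your argument implements that alternative within the paper's existing toolkit. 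One step you should make explicit, however: Theorem \ref{Classification of groups by cocycles} pins down the cocycle of $E_{t}$ only up to coboundary, whereas your presentation match needs the section cocycle of $E_{t}$ to equal $t\phi$ on the nose, not merely represent $t[\phi]$; otherwise the lifted relations reproduce $R^{t}2$ and $R^{t}5$ only up to coboundary corrections. The fix is one line: letting $\lambda$ be the normalized bar cocycle of the section $s$ of $G_{n}$ (so $\lambda\circ\gamma_{2}=\phi$ by Theorems \ref{thm: definition of cohom class} and \ref{thm: cohom of G_{n}}), realize $E_{t}$ as the set $\mathbb{Z}^{\binom{n}{2}}\times S_{n}$ with multiplication twisted by $t\lambda$; the evident section then has bar cocycle exactly $t\lambda$ and cellular values $t\lambda\circ\gamma_{2}=t\phi$ exactly (equivalently, adjust an arbitrary section by a $1$-cochain in $\hom(R_{1},\mathbb{Z}^{\binom{n}{2}})$ to absorb the coboundary). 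With that sentence added, your route --- cocycle first, presentation second, with the Theorem \ref{relation 4}-style clean-up converting the braid-relation lift into $R^{t}3$ and $R^{t}4$ --- is complete, and is arguably more rigorous than the paper's on the exactness claims.
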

\begin{proof}
	We first prove:
	\begin{equation}\label{Diagram: Commuting Diagram for G_{n}^{t}} 
		\begin{tikzcd}
		0\arrow{r}&\mathbb{Z}^{n\choose2}\arrow{r}{\iota_{t}}\arrow{d}{\id_{\mathbb{Z}^{n\choose2}}}&G_{n}^{t}\arrow{r}{\pi_{t}}\arrow{d}{f_{t}}&S_{n}\arrow{r}\arrow{d}{\id_{S_{n}}}&1\\
		0\arrow{r}&\mathbb{Z}^{n\choose2}\arrow{r}{\iota_{t}^{'}}&E_{t\cdot\phi}\arrow{r}{\pi_{t}^{'}}&S_{n}\arrow{r}&1
	\end{tikzcd}
	\end{equation}
	is a commuting diagram of short exact sequences. Since $\mathbb{Z}^{n\choose2}=\langle \{g_{i,j}\}_{1\leq i<j\leq n}\mid [g_{i,j},g_{k,\ell}]=1\rangle$, define a map $\iota_{t}:\mathbb{Z}^{n\choose2}\to G_{n}^{t}$ by $\iota_{t}(g_{i,j})=g_{i,j}$ for all $1\leq i<j\leq n$. Since $g_{i,j}$ is a generator of $G_{n}^{t}$ and relation $R^{t}1$, $\iota_{t}$ is a well defined homomorphism. Recall $\iota^{'}:\mathbb{Z}^{n\choose2}\to E_{t\cdot\phi}$ is defined by $\iota_{t}^{'}(g_{i,j})=(g_{i,j},1_{S_{n}})$; therefore $\iota_{t}^{'}$ is well defined and injective.
	
	Now, define $\pi_{t}:G_{n}^{t}\to S_{n}$ by the following:
	$$\pi_{t}(s)=\begin{cases}
		1\hspace{2em}&\text{if }s\in\{g_{i,j}\}\\
		\sigma_{i,j}\hspace{2em}&\text{if }s\in\{\tilde{\sigma}_{i,j}\}
	\end{cases}$$
	Note that evaluating $\pi_{t}$ on the relations given in Table \ref{Table: Relations of G^t} shows $\pi_{t}$ is well defined. Furthermore, since $\pi_{t}(\tilde{\sigma}_{i,j})=\sigma_{i,j}$, $\pi_{t}$ is surjective. Recall from section \ref{group cohomology}, $\pi_{t}^{'}:E_{t\cdot\phi}\to S_{n}$ defined by $\pi_{t}^{'}((a,g))=g$ is a well defined, surjective homomorphism.
	Define $f_{t}:G_{n}^{t}\to E_{t\cdot\phi}$ on the generators of our presentation by:
	$$f_{t}(s)=\begin{cases}
		(s,1_{S_{n}})\hspace{1em}&\text{if }s\in\{g_{i,j}\}_{1\leq i<j\leq n}\\
		(0,s)\hspace{1em}&\text{if }s\in\{\tilde{\sigma}_{i,j}\}_{1\leq i<j\leq n}
	\end{cases}$$
	By the definitions of $\iota_{t}$, $\iota_{t}^{'}$, and $f_{t}$:
	\begin{align*}
		f_{t}\circ\iota_{t}(g_{i,j})&=f(\iota_{t}(g_{i,j}))\\
		&=f_{t}(g_{i,j})\\
		&=(g_{i,j},1_{S_{n}})\\
		&=\iota_{t}^{'}(g_{i,j})
	\end{align*}
	Therefore $f_{t}\circ\iota_{t}=\iota_{t}^{'}\circ\id_{\mathbb{Z}^{n\choose2}}$. Suppose $a\in\ker\iota_{t}$, then $f_{t}\circ\iota_{t}(a)=\iota_{t}^{'}(a)$ and $f_{t}\circ\iota_{t}(a)=(0,1_{S_{n}})$. Since $\iota_{t}^{'}$ is injective, $a=0$ and $\iota_{t}$ is injective. 
	Let $s$ be a generator of $G_{n}^{t}$. By the definitions of $f_{t}$, $\phi_{t}$, and $\pi_{t}^{'}$, if $s\in\{g_{i,j}\}_{1\leq i<j\leq n}$:
	\begin{align*}
		\pi_{t}^{'}\circ f_{t}(g_{i,j})&=\pi_{t}^{'}(f_{t}(g_{i,j}))\\
		&=\pi_{t}^{'}((g_{i,j},1_{S_{n}}))\\
		&=1_{S_{n}}\\
		&=\pi_{t}(g_{i,j})
	\end{align*}
	Similarly, if $s\in\{\tilde{\sigma}_{i,j}\}_{1\leq i<j\leq n}$:
	\begin{align*}
		\pi_{t}^{'}\circ f_{t}(\tilde{\sigma}_{i,j})&=\pi_{t}^{'}(f_{t}(\tilde{\sigma}_{i,j}))\\
		&=\pi_{t}^{'}((0,\sigma_{i,j}))\\
		&=\sigma_{i,j}\\
		&=\pi_{t}(\tilde{\sigma}_{i,j})
	\end{align*}
	Therefore $f_{t}$ makes the diagram \eqref{Diagram: Commuting Diagram for G_{n}^{t}} commute. It remains to show the top row of \eqref{Diagram: Commuting Diagram for G_{n}^{t}} is exact. By the definitions of $\iota_{t}$ and $\pi_{t}$, $\im\iota_{t}\subseteq\ker\pi_{t}$. Furthermore, by adding the relation $g_{i,j}=1$ to the relations in Table \ref{Table: Relations of G^t} we obtain a presentation for $S_{n}$. Therefore $\ker\pi_{t}$ is the normal closure of $\im\iota_{t}$. Note the image of $\iota_{t}$ is generated by the $g_{i,j}$'s in $G_{n}^{t}$. By relation $R^{t}6$, the $g_{i,j}$'s generate a normal subgroup of $G_{n}^{t}$. Thus $\im\iota_{t}$ is normal in $G_{n}^{t}$. In particular, the normal closure of $\im\iota_{t}$ is equal to $\im\iota_{t}$. Therefore $\im\iota_{t}=\ker\pi_{t}$ and the top row of \eqref{Diagram: Commuting Diagram for G_{n}^{t}} is exact. Thus $G_{n}^{t}$ is an extension of $S_{n}$ by $\mathbb{Z}^{n\choose2}$.
	
	Since $E_{t\cdot\phi}$ is an extension of $S_{n}$ by $\mathbb{Z}^{n\choose2}$, the bottom row of \eqref{Diagram: Commuting Diagram for G_{n}^{t}} is exact as well. By the 5-Lemma, $G_{n}^{t}\approx E_{t\cdot\phi}$. Furthermore, since $f_{t}$ makes the diagram commute, $G_{n}^{t}$ and $E_{t\cdot\phi}$ are equivalent group extensions. Therefore $G_{n}^{t}$ corresponds to the cohomology class of $t\cdot[\phi]\in H^{2}(S_{n};\mathbb{Z}^{n\choose2})$.
\end{proof}

\paragraph{Remark}
An alternative, and more formal, process of constructing the cohomology class corresponding to $t\cdot[\phi]$ in $H^{2}(S_{n};\mathbb{Z}^{n\choose2})$ would be to construct new extensions from $G_{n}$ using a technique analogous to Baer sums. Note that Baer sums do not generalize when the cokernel of an extension is non-abelian, however no obstructions arise when taking the pullback construction of equivalent extensions with the same group presentation.
	\begin{theorem}\label{splitting of G^t}
		If $t\in2\mathbb{Z}$, then the group extension:
		$$0\to\mathbb{Z}^{n\choose2}\to G_{n}^{t}\to S_{n}\to 1$$
		is split.
	\end{theorem}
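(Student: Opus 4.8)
The plan is to split the extension by exhibiting a homomorphic section, or equivalently, by realizing the cocycle $t\phi$ as a coboundary on the resolution $\mathcal{R}$. Since $G_n^t$ classifies $t\cdot[\phi]$ by Lemma \ref{lemma: cohom class t phi}, and a split extension corresponds to the trivial class by Theorem \ref{Classification of groups by cocycles}, it suffices to produce $\psi\in\hom_{S_n}(R_1,\mathbb{Z}^{\binom{n}{2}})$ with $\delta\psi=\psi\circ\partial_2^R=t\phi$. Because $R_1$ is $\mathbb{Z}S_n$-free on $\{\tilde{x}_{i,j}\}$, such a $\psi$ is determined by freely choosing $\psi_{i,j}:=\psi(\tilde{x}_{i,j})\in\mathbb{Z}^{\binom{n}{2}}$ for $i<j$, and the boundary formulas for $\tilde{c}_{i,j}$, $\tilde{d}_{i,j,k,\ell}$, $\tilde{e}_{i,k,j}$ turn $\delta\psi=t\phi$ into three explicit families of linear equations in the $\psi_{i,j}$. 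Group-theoretically this is the same as setting the lifts $\tilde{\sigma}_{i,j}\,\iota_1(\psi_{i,j})$ and asking that they satisfy the relations of presentation \eqref{symmetric group}. I would first reduce to $t=2$: since $t\cdot[\phi]=(t/2)\cdot 2[\phi]$, it is enough to show $2[\phi]=0$, i.e.\ that $G_n^2$ splits.

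The squaring relations are where the parity hypothesis enters. The equation $\delta\psi(\tilde{c}_{i,j})=t\phi(\tilde{c}_{i,j})$ is $(1+\sigma_{i,j})\psi_{i,j}=t\,g_{i,j}$, using $\partial_2^R\tilde{c}_{i,j}=(1+\sigma_{i,j})\tilde{x}_{i,j}$ as in Theorem \ref{thm: chain map}. Since $g_{i,j}$ is fixed by $\sigma_{i,j}$ and $1+\sigma_{i,j}$ acts as multiplication by $2$ on the $\sigma_{i,j}$-invariant sublattice, the $g_{i,j}$-coefficient of every element of $\operatorname{im}(1+\sigma_{i,j})$ is even; hence $t\,g_{i,j}$ lies in the image precisely when $t$ is even, in which case $\psi_{i,j}=\tfrac{t}{2}g_{i,j}$ solves it. This is exactly the obstruction that makes the extension nonsplit for odd $t$.

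Next I would correct the braid and commuting relations without disturbing the squaring ones. The key point is that a term $g_{i,a}-g_{j,a}$ is $\sigma_{i,j}$-anti-invariant, so adding such terms to $\psi_{i,j}$ leaves $(1+\sigma_{i,j})\psi_{i,j}$ unchanged. The mechanism is the identity $(1-\sigma_{k,\ell})(g_{i,k}-g_{j,k})=g_{i,k}-g_{i,\ell}-g_{j,k}+g_{j,\ell}$, whose right-hand side is precisely $\phi(\tilde{d}_{i,j,k,\ell})$ for $i<k<j<\ell$, together with the analogous identities producing $\phi(\tilde{e}_{i,k,j})$ out of R3 and R4. Thus I would take $\psi_{i,j}=\tfrac{t}{2}g_{i,j}+\tfrac{t}{2}\sum_{a}\varepsilon_{i,j,a}(g_{i,a}-g_{j,a})$, with the signs and index ranges $\varepsilon_{i,j,a}$ determined by matching these identities against $t\phi(\tilde{d}_{i,j,k,\ell})$ and $t\phi(\tilde{e}_{i,k,j})$ in each order-case of R3, R4, and R5, and then verify the resulting $\psi$ satisfies $\delta\psi=t\phi$ on all three generating families.

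The main obstacle is this last step: choosing the cross-term coefficients $\varepsilon_{i,j,a}$ so that every $\tilde{d}$- and $\tilde{e}$-equation holds simultaneously, since the same $\psi_{i,j}$ feeds into many relations and the equations couple $\psi_{i,j}$ with $\psi_{k,\ell}$. This is a finite but delicate bookkeeping problem, made fussier by the several inequalities among $i,j,k,\ell$ that govern the cases. A reassuring a priori check that the system must be solvable is structural: $\mathbb{Z}^{\binom{n}{2}}$ is the $S_n$-permutation module on two-element subsets, i.e.\ $\mathbb{Z}[S_n/(S_2\times S_{n-2})]$, so Shapiro's lemma gives $H^2(S_n;\mathbb{Z}^{\binom{n}{2}})\cong H^2(S_2\times S_{n-2};\mathbb{Z})$, and the latter is killed by $2$ (its only contributions come from $H^1(-;\mathbb{Z})=0$ and the $\mathbb{Z}/2$ summands $\operatorname{Ext}(H_1,\mathbb{Z})$). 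Hence $2[\phi]=0$ a priori, which both guarantees that the coboundary $\psi$ exists and yields the splitting of $G_n^t$ for every even $t$.
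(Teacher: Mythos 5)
Your proposal is correct, but what actually proves the theorem is your final ``a priori check,'' and it is a genuinely different route from the paper's. The paper splits $G_{n}^{t}$ by brute force: it defines $\omega(\sigma_{i})=\tilde{\sigma}_{i,i+1}g_{i,i+1}^{-t/2}$ on the adjacent-transposition presentation \eqref{symmetric group generated by adjacent transpositions} and verifies the square, commuting, and braid relations directly from the relations in Table \ref{Table: Relations of G^t} -- note that by working with adjacent transpositions only, the paper never needs the cross-terms that dominate your middle paragraphs. Your explicit-coboundary plan, by contrast, lives on the resolution $\mathcal{R}$ built from presentation \eqref{symmetric group} with all transpositions, and there the naive choice $\psi_{i,j}=\tfrac{t}{2}g_{i,j}$ genuinely fails: $\delta\psi(\tilde{e}_{i,k,j})=\tfrac{t}{2}(g_{i,j}-g_{k,j})$ in \emph{every} ordering of $i,j,k$, whereas $t\phi(\tilde{e}_{i,k,j})$ is $t(g_{i,j}-g_{k,j})$ in three orderings and $0$ in the other three (and similarly $\delta\psi(\tilde{d}_{i,j,k,\ell})=0$ while $t\phi(\tilde{d}_{i,j,k,\ell})\neq 0$ in the interleaved cases), so the cross-term bookkeeping you defer is a real and nontrivial gap in that plan taken alone. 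However, your Shapiro argument closes the proof completely and independently: $\mathbb{Z}^{\binom{n}{2}}\cong\mathbb{Z}[S_{n}/(S_{2}\times S_{n-2})]$ as an $S_{n}$-module, induction equals coinduction for finite index, so $H^{2}(S_{n};\mathbb{Z}^{\binom{n}{2}})\cong H^{2}(S_{2}\times S_{n-2};\mathbb{Z})$, which is $\operatorname{Ext}(H_{1}(S_{2}\times S_{n-2}),\mathbb{Z})\cong(\mathbb{Z}/2)^{2}$ for $n\geq 4$ (and $\mathbb{Z}/2$ or $0$ for small $n$), hence of exponent $2$; combined with Lemma \ref{lemma: cohom class t phi} and Theorem \ref{Classification of groups by cocycles}, $t\cdot[\phi]=0$ for even $t$ and $G_{n}^{t}$ splits. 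What each approach buys: the paper's computation is elementary and self-contained and produces an explicit splitting homomorphism, while your argument is shorter and conceptual, proves the stronger statement that the entire group $H^{2}(S_{n};\mathbb{Z}^{\binom{n}{2}})$ is killed by $2$ (which also yields the ``order $2$'' half of Theorem \ref{main theorem} immediately, modulo nontriviality of $[\phi]$), at the cost of machinery (induced modules, Shapiro, universal coefficients) the paper never develops. If you write this up, lead with the Shapiro paragraph and either drop the explicit $\psi$ or finish its case analysis honestly.
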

	\begin{proof}
		First, note that the existence of a splitting is independent of group presentation for $S_{n}$. Therefore it suffices to define a homomorphism $\omega:S_{n}\to G_{n}^{t}$ using the presentation \eqref{symmetric group generated by adjacent transpositions} of $S_{n}$. Set: $$\omega(\sigma_{i})=\tilde{\sigma}_{i,i+1}g_{i,i+1}^{-t/2}$$
		First note that:
		\begin{align*}
			\omega(\sigma_{i})^{2}&=\tilde{\sigma}_{i,i+1}g_{i,i+1}^{-t/2}\tilde{\sigma}_{i,i+1}g_{i,i+1}^{-t/2}\\
			&=\tilde{\sigma}_{i,i+1}g_{i,i+1}\tilde{\sigma}_{i,i+1}^{-1}\tilde{\sigma}_{i,i+1}^{2}g_{i,i+1}\\
			&=g_{i,i+1}^{-t/2}g_{i,i+1}^{t}g_{i,i+1}^{-t/2}\\
			&=1
		\end{align*}
		Now, if $|i-j|>1$, then by the relations in Table \ref{Table: Relations of G^t}:
		$$\omega(\sigma_{i})\omega(\sigma_{j})\omega(\sigma_{i})^{-1}\omega(\sigma_{j})^{-1}=1$$
		Thus it only remains to show the braid relation is preserved under $\omega$. Applying relations $R^{t}1$, $R^{t}2$, and $R^{t}6$ together with $R^{t}3$ on the image we get:
		\begin{align*}
			&\tilde{\sigma}_{i,i+1}^{}g_{i,i+1}^{-t/2}\tilde{\sigma}_{i+1,i+2}^{}g_{i+1,i+2}^{-t/2}\tilde{\sigma}_{i,i+1}^{}g_{i,i+1}^{-t/2}g_{i+1,i+2}^{t/2}\tilde{\sigma}_{i+1,i+2}^{-1}g_{i,i+1}^{t/2}\tilde{\sigma}_{i,i+1}^{-1}g_{i+1,i+2}^{t/2}\tilde{\sigma}_{i+1,i+2}^{-1}\\
			&=g_{i,i+1}^{-t/2}\tilde{\sigma}_{i,i+1}^{}\tilde{\sigma}_{i+1,i+2}^{}\tilde{\sigma}_{i,i+1}^{-1}\tilde{\sigma}_{i,i+1}^{}g_{i+1,i+2}^{-t/2}\tilde{\sigma}_{i,i+1}^{-1}\tilde{\sigma}_{i,i+1}^{2}g_{i,i+1}^{-t/2}g_{i+1,i+2}^{t/2}\tilde{\sigma}_{i+1,i+2}^{-1}g_{i,i+1}^{t/2}\tilde{\sigma}_{i,i+1}^{-1}\tilde{\sigma}_{i+1,i+2}^{-1}g_{i+1,i+2}^{-1}\\
			&=g_{i,i+1}^{-t/2}\tilde{\sigma}_{i,i+2}^{}g_{i,i+2}^{-t/2}g_{i,i+1}^{t}g_{i,i+1}^{-t/2}g_{i+1,i+2}^{t/2}\tilde{\sigma}_{i+1,i+2}^{-2}\tilde{\sigma}_{i+1,i+2}^{}g_{i,i+1}^{t/2}\tilde{\sigma}_{i+1,i+2}^{-1}\tilde{\sigma}_{i+1,i+2}^{}\tilde{\sigma}_{i,i+1}^{-1}\tilde{\sigma}_{i+1,i+2}^{-1}g_{i+1,i+2}^{t/2}\\
			&=g_{i,i+1}^{-t/2}\tilde{\sigma}_{i,i+2}^{}g_{i,i+2}^{-t/2}g_{i,i+1}^{t/2}g_{i+1,i+2}^{t/2}g_{i+1,i+2}^{-t}g_{i,i+2}^{t/2}\tilde{\sigma}_{i+1,i+2}^{}\tilde{\sigma}_{i,i+1}^{-1}\tilde{\sigma}_{i+1,i+2}^{-1}g_{i+1,i+2}^{t/2}\\
			&=g_{i,i+1}^{-t/2}\tilde{\sigma}_{i,i+2}g_{i,i+1}^{t/2}g_{i+1,i+2}^{-t/2}\tilde{\sigma}_{i+1,i+2}^{2}\tilde{\sigma}_{i+1,i+2}^{-1}\tilde{\sigma}_{i,i+1}^{-1}\tilde{\sigma}_{i+1,i+2}^{}\tilde{\sigma}_{i+1,i+2}^{-2}g_{i+1,i+2}^{t/2}\\
			&=g_{i,i+1}^{-t/2}\tilde{\sigma}_{i,i+2}^{}g_{i,i+1}^{t/2}g_{i+1,i+2}^{-t/2}g_{i+1,i+2}^{t}\tilde{\sigma}_{i,i+2}^{-1}g_{i+1,i+2}^{-t}g_{i+1,i+2}^{-t/2}\\
			&=g_{i,i+1}^{-t/2}\tilde{\sigma}_{i,i+2}^{}g_{i,i+1}^{t/2}\tilde{\sigma}_{i,i+2}^{-1}\tilde{\sigma}_{i,i+2}^{}g_{i+1,i+2}^{t/2}\tilde{\sigma}_{i,i+2}^{-1}g_{i+1,i+2}^{-t/2}\\
			&=g_{i,i+1}^{-t/2}g_{i+1,i+2}^{t/2}g_{i,i+1}^{t/2}g_{i+1,i+2}^{-t/2}\\
			&=1
		\end{align*}
		Thus $\omega$ is well defined and the short exact sequence is split.
	\end{proof}

\section{$\Z_{n}$ and $B_{n}[4]$}
\subsection{Cohomology class corresponding to $\mathcal{Z}_{n}$}
Recall the group extension
	$$\begin{tikzcd}
		0\arrow{r}&\mathbb{Z}_{2}^{n\choose2}\arrow{r}{\iota}&\mathcal{Z}_{n}\arrow{r}{\pi}&S_{n}\arrow{r}&1
	\end{tikzcd}$$
Since $\mathbb{Z}_{2}^{n\choose2}\approx\mathcal{PZ}_{n}$ and $\mathcal{Z}_{n}\approx B_{n}/B_{n}[4]$, the action of $S_{n}$ on $\mathbb{Z}_{2}^{n\choose2}$ is induced by the conjugation action of $B_{n}$ on $PB_{n}$. Furthermore, $\mathbb{Z}_{2}^{n\choose2}\approx \mathcal{PZ}_{n}$, so each $\bar{g}_{ij}$ corresponds to the pure braid between strands $i$ and $j$. 
\begin{table}
	\begin{center}
		\caption{Relations of $\mathcal{Z}_{n}$}\label{Relations of Z}
		\vspace{1em}
		\begin{tabular}{lcl}
			
			\vspace{.5em}
			$\mathfrak{R}0$:&$\bar{g}_{i,j}^{2}=1$& for all $i,j$\\
			\vspace{.5em}
			$\mathfrak{R}1$:&$[\bar{g}_{i,j},\bar{g}_{k,\ell}]=1$& for all $i,j,k,\ell$\\
			\vspace{.5em}
			$\mathfrak{R}2$:& $\bar{\sigma}_{i,j}^{2}=\bar{g}_{i,j}$ &for all $i,j$\\
				
			\vspace{1em}
			$\mathfrak{R}3$:& 
			$\bar{\sigma}_{i,j}\bar{\sigma}_{k,j}\bar{\sigma}_{i,j}^{-1}=\bar{\sigma}_{i,k}$
			&if\hspace{1em}$k<i<j;\hspace{.5em}i<j<k;\hspace{.5em}\text{or}\hspace{.5em}j<k<i$\\
				
			\vspace{.5em}
			$\mathfrak{R}4$& $\bar{\sigma}_{i,j}^{-1}\bar{\sigma}_{j,k}\bar{\sigma}_{i,j}=\bar{\sigma}_{i,k}$ & 
			if\hspace{1em}$i<k<j;\hspace{.5em}j<i<k;\hspace{.5em}\text{or}\hspace{.5em}k<j<i$\\
				
			\vspace{.5em}
			$\mathfrak{R}5$&$[\bar{\sigma}_{i,j},\bar{\sigma}_{k,\ell}]=\begin{cases}
				\bar{g}_{i,k}\bar{g}_{i,\ell}\bar{g}_{j,k}\bar{g}_{j,\ell}\\
				1
			\end{cases}$ & $\begin{aligned}
				&i<k<j<\ell\text{ or }
				k<i<\ell<j\\
				&\text{otherwise}
			\end{aligned}$\\
				
			\vspace{.5em}
			$\mathfrak{R}6$:& $\bar{\sigma}_{i,j}g_{k,\ell}\bar{\sigma}_{i,j}^{-1}=
			g_{\sigma_{i,j}(k),\sigma_{i,j}(\ell)}$&for all $i,j,k,\ell\in\{1,\ldots,n\}$\\
		\end{tabular}\\
	\end{center}
\end{table}
\begin{theorem}\label{thm: presentation of Z}
	Let $A_{n}$ be the subgroup of $G_{n}$ normally generated by $\{g_{i,j}^{2}\}_{1\leq i<j\leq n}$. Then $G_{n}/A_{n}\approx \mathcal{Z}_{n}$.
\end{theorem}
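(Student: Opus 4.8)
The plan is to realize both $G_{n}$ and $\Z_{n}$ as quotients of the braid group $B_{n}$ and to read off the isomorphism from the third isomorphism theorem. Recall that $G_{n}=B_{n}/K_{n}$ with $K_{n}=[PB_{n},PB_{n}]$, and that $\Z_{n}=B_{n}/PB_{n}^{2}$. Both $K_{n}$ (the commutator subgroup) and $PB_{n}^{2}$ (generated by all squares) are characteristic in $PB_{n}$, and $PB_{n}$ is normal in $B_{n}$; hence $K_{n}$ and $PB_{n}^{2}$ are both normal in $B_{n}$. It therefore suffices to check $K_{n}\subseteq PB_{n}^{2}$ and to identify the image of $PB_{n}^{2}$ in $G_{n}$ with $A_{n}$.

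The containment $K_{n}\subseteq PB_{n}^{2}$ is where the Kordek--Margalit computation enters: since $\mathcal{PZ}_{n}=PB_{n}/PB_{n}^{2}\approx\mathbb{Z}_{2}^{n\choose2}$ is abelian, every commutator of pure braids lies in $PB_{n}^{2}$, so $K_{n}=[PB_{n},PB_{n}]\subseteq PB_{n}^{2}$. Next I would identify $PB_{n}^{2}/K_{n}$ as a subgroup of $G_{n}$. Under the quotient $B_{n}\to G_{n}$ the subgroup $PB_{n}$ maps onto its abelianization $PB_{n}/K_{n}\approx\mathbb{Z}^{n\choose2}=\langle g_{i,j}\rangle$, and the image of a square $p^{2}$ is twice the class of $p$. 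As $p$ ranges over $PB_{n}$ these classes fill $\mathbb{Z}^{n\choose2}$, so the image of $PB_{n}^{2}$ is $2\mathbb{Z}^{n\choose2}=\langle g_{i,j}^{2}\rangle$; that is, $PB_{n}^{2}/K_{n}=\langle g_{i,j}^{2}\rangle$ inside $G_{n}$.

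It remains to check that $\langle g_{i,j}^{2}\rangle$ coincides with the normal closure $A_{n}$. By relation R1 the elements $g_{i,j}^{2}$ commute with every $g_{k,\ell}$, and by relation R6 conjugation by $\tilde{\sigma}_{k,\ell}$ carries $g_{i,j}^{2}$ to $g_{\sigma_{k,\ell}(i),\sigma_{k,\ell}(j)}^{2}$, which is again one of the generators; thus $\langle g_{i,j}^{2}\rangle$ is already normal in $G_{n}$ and hence equals its own normal closure $A_{n}$. With both $K_{n}$ and $PB_{n}^{2}$ normal in $B_{n}$, $K_{n}\subseteq PB_{n}^{2}$, and $PB_{n}^{2}/K_{n}=A_{n}$ established, the third isomorphism theorem yields
$$G_{n}/A_{n}=(B_{n}/K_{n})/(PB_{n}^{2}/K_{n})\approx B_{n}/PB_{n}^{2}=\Z_{n}.$$
The main obstacle is the second paragraph: correctly pinning down the image of $PB_{n}^{2}$ under abelianization and invoking $\mathcal{PZ}_{n}\approx\mathbb{Z}_{2}^{n\choose2}$ to guarantee $K_{n}\subseteq PB_{n}^{2}$; the rest is bookkeeping. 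As a byproduct, passing to $G_{n}/A_{n}$ simply adjoins the relations $g_{i,j}^{2}=1$ to the presentation of $G_{n}$ in Table \ref{Table: Relations of G}, which after reducing the exponents in R5 modulo $2$ gives exactly the relations of Table \ref{Relations of Z}, recovering the presentation of $\Z_{n}$.
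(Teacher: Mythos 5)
Your proposal is correct and takes essentially the same route as the paper: both identify $A_{n}$ with the image of $PB_{n}^{2}=B_{n}[4]$ in $G_{n}$ and conclude $G_{n}/A_{n}\approx\Z_{n}$ via the third isomorphism theorem. You merely make explicit the steps the paper's terse proof leaves implicit --- the containment $K_{n}\subseteq PB_{n}^{2}$, the computation that $PB_{n}^{2}$ maps onto $2\mathbb{Z}^{n\choose2}=\langle g_{i,j}^{2}\rangle$ under abelianization, and the verification (via R1 and R6) that this subgroup is already normal and so equals the normal closure $A_{n}$.
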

\begin{proof}
	Since $A_{n}$ is generated by $g_{i,j}^{2}$ for $1\leq i<j\leq n$ and $g_{i,j}$ corresponds to the pure braid between the $i^{\text{th}}$ and $j^{\text{th}}$ strands, $A_{n}$ is the subgroup of $G_{n}$ generated by squares of standard generators of pure braids. Recall $B_{n}[4]$ is the subgroup of $PB_{n}$ generated by squares of all elements \cite{B/M}. Since the $g_{i,j}$'s commute in $G_{n}$, the image of $PB_{n}$ is an abelian subgroup of $G_{n}$. Therefore the image of $B_{n}[4]$ in $G_{n}$ is generated by squares of generators of the pure braid group. Therefore $A_{n}$ is the image of $B_{n}[4]$ in $G_{n}$. Thus $G_{n}/A_{n}\approx\Z_{n}$.
\end{proof}
	
\paragraph{Presentation of $\mathcal{Z}_{n}$}
Let $s^{'}$ be the normalized section lifting $\sigma_{i,j}$ to $\bar{\sigma}_{i,j}$ where $\bar{\sigma}$ represents the image of $b_{i,j}$ in $\mathcal{Z}_{n}$. Since $A_{n}$ is the image of $B_{n}[4]$ in $G_{n}$, we obtain a presentation for $\Z_{n}$ by adding the relation $g_{i,j}^{2}=1$ to the relations in Table \ref{Table: Relations of G}. We simplify the notation by taking all winding numbers$\mod2$. Furthermore, we replace $g_{i,j}$ and $\sigma_{i,j}$ with $\bar{g}_{i,j}$ and $\bar{\sigma}_{i,j}$ for clarity. Therefore we get the following generating set for $\Z_{n}$:
	$$\{\bar{g}_{i,j}\}\cup\{\bar{\sigma}_{i,j}\}$$
where $1\leq i<j\leq n$. A full list of relations is given in Table \ref{Relations of Z}. The following theorem completes the proofs of both Theorem \ref{main theorem} and Theorem \ref{def of cocycle}.
\paragraph{Remark} Notice that $\mathfrak{R}4$ can be determined from $\mathfrak{R}3$.
\begin{theorem}\label{thm: cohom class of Z}
	Let $\kappa\in\hom(R_{2},\mathbb{Z}_{2}^{n\choose2})$ be the represenative for the cohomology class in $H^{2}(S_{n};\mathbb{Z}_{2}^{n\choose2})$ corresponding to $\Z_{n}$ as an extension of $S_{n}$ by $\mathbb{Z}_{2}^{n\choose2}$ given by the usual construction. Let $\eta:\mathbb{Z}^{n\choose2}\to\mathbb{Z}_{2}^{n\choose2}$ be the$\mod2$ reduction map. Then $\kappa=\eta\circ\phi$ where $\phi$ is the representative for $G_{n}$ defined in Theorem \ref{thm: cohom of G_{n}}.
\end{theorem}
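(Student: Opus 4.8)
The plan is to avoid recomputing anything and instead exploit the fact that both $\kappa$ and $\phi$ are produced by the single construction of Theorem \ref{thm: definition of cohom class} from the same resolution $R_\bullet$, with their defining sections linked through the quotient homomorphism $q\colon G_n\to G_n/A_n\approx\Z_n$ supplied by Theorem \ref{thm: presentation of Z}. First I would promote $q$ to a morphism of extensions covering $\id_{S_n}$: it carries $\tilde\sigma_{i,j}\mapsto\bar\sigma_{i,j}$, so it commutes with the projections to $S_n$, and it carries $g_{i,j}\mapsto\bar g_{i,j}$. The key point to pin down is that the restriction of $q$ to the kernel $\iota_1(\mathbb{Z}^{n\choose2})$ is \emph{exactly} the mod $2$ reduction $\eta$. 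This reduces to checking that $A_n$ is the subgroup $\langle g_{i,j}^2\rangle$ lying inside the kernel: $A_n$ is \emph{normally} generated by the $g_{i,j}^2$, but relation R6 shows conjugation by the half-twists merely permutes the commuting generators $g_{k,\ell}$, so the normal closure introduces nothing beyond $\langle g_{i,j}^2\rangle$ and stays within the abelian kernel. Hence $\iota_1(\mathbb{Z}^{n\choose2})/A_n=\mathbb{Z}^{n\choose2}/\langle g_{i,j}^2\rangle=\mathbb{Z}_2^{n\choose2}$ and $q|_{\ker}=\eta$.

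Next I would observe that the two sections are compatible, namely $s'=q\circ s$. Both the section $s$ used for $\phi$ and the section $s'$ used for $\kappa$ are built from the identical normal-form algorithm of Section \ref{section on Gn}, and both send a generator $\sigma_{i,j}$ to the image of the half-twist $b_{i,j}$. Since $q$ is a homomorphism with $q(\tilde\sigma_{i,j})=\bar\sigma_{i,j}$, applying $q$ to the word $s(p)$ yields $s'(p)$ for every $p\in S_n$.

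With these two facts the conclusion is formal. Each defining word for $\phi$ on a generator of $R_2$ lies in $\iota_1(\mathbb{Z}^{n\choose2})$, being a product of lifts whose image is a relation of $S_n$; so applying $q$ and using $s'=q\circ s$ gives, for instance,
$$\kappa(\tilde c_{i,j})=s'(\sigma_{i,j})s'(\sigma_{i,j})=q\bigl(s(\sigma_{i,j})s(\sigma_{i,j})\bigr)=\eta\bigl(\phi(\tilde c_{i,j})\bigr),$$
and the identical manipulation on the longer words defining $\phi(\tilde d_{i,j,k,\ell})$ and $\phi(\tilde e_{i,k,j})$ shows $\kappa=\eta\circ\phi$ on all three classes of generators of $R_2$, hence as homomorphisms.

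As a concrete cross-check, and to recover the explicit formulas of Theorem \ref{def of cocycle}, I would reduce the output of Theorem \ref{thm: cohom of G_{n}} modulo $2$: dropping signs and reading exponents in $\mathbb{Z}_2^{n\choose2}$ turns $g_{i,k}-g_{i,\ell}-g_{k,j}+g_{j,\ell}$ into $\bar g_{i,k}+\bar g_{k,j}+\bar g_{i,\ell}+\bar g_{\ell,j}$ and $g_{i,j}-g_{k,j}$ into $\bar g_{i,j}+\bar g_{j,k}$, matching Theorem \ref{def of cocycle} exactly. This also completes Theorem \ref{main theorem}, since $[\kappa]=\eta_*[\phi]$ is the mod $2$ reduction of the class $[\phi]$, which has order $2$ by Theorem \ref{splitting of G^t}. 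I expect the main obstacle to be the kernel identification in the first paragraph; once it is established, everything else is the formal functoriality of the cocycle construction under a coefficient map induced by a morphism of extensions.
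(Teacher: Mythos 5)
Your proposal is correct and follows essentially the same route as the paper: the paper's proof likewise defines the quotient morphism $f\colon G_{n}\to\Z_{n}$ (your $q$) sending $\tilde{\sigma}_{i,j}\mapsto\bar{\sigma}_{i,j}$ and $g_{i,j}\mapsto\bar{g}_{i,j}$, notes it commutes with the two extensions over $\id_{S_{n}}$ with $\eta$ on the kernels, uses the compatibility $s'=f\circ s$ of the normal-form sections, and concludes $\kappa=\eta\circ\phi$ formally via Theorem \ref{thm: definition of cohom class}. Your extra verification that the normal closure $A_{n}$ of $\{g_{i,j}^{2}\}$ is just $\langle g_{i,j}^{2}\rangle$ inside the abelian kernel (via R6) is a detail the paper leaves implicit, and is a welcome tightening rather than a different argument.
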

\begin{proof}
	Define $f:G_{n}\to\mathcal{Z}_{n}$ by $f(\tilde{\sigma}_{i,j})=\bar{\sigma}_{i,j}$ and $f(g_{i,j})=\bar{g}_{i,j}$ for all $i,j$ such that $1\leq i<j\leq n$. Then $f$ is a well defined group homomorphism which commutes with the following diagram:
	$$\begin{tikzcd}
		0\arrow{r}&\mathbb{Z}^{n\choose2}\arrow{r}{\iota_{1}}\arrow{d}{\eta}&G_{n}\arrow{r}{\pi_{1}}\arrow{d}{f}&S_{n}\arrow{r}\arrow{d}{\text{id}}&1\\
		0\arrow{r}&\mathbb{Z}_{2}^{n\choose2}\arrow{r}{\iota}&\mathcal{Z}_{n}\arrow{r}{\pi}&S_{n}\arrow{r}&1
	\end{tikzcd}$$
	Furthermore, by Theorem \ref{thm: definition of cohom class}
	\begin{align*}
		\kappa(\tilde{c}_{i,j})&=s^{'}(\sigma_{i,j})s^{'}(\sigma_{i,j})\\
		\kappa(\tilde{d}_{i,j,k,\ell})=&=s^{'}(\sigma_{i,j})s^{'}(\sigma_{k,\ell})s^{'}(\sigma_{i,j}\sigma_{k,\ell})^{-1}-s^{'}(\sigma_{k,\ell})s^{'}(\sigma_{i,j})s^{'}(\sigma_{k,\ell}\sigma_{i,j})^{-1}\\
		\kappa'(\tilde{e}_{i,k,j})&=s^{'}(\sigma_{i,j})s^{'}(\sigma_{j,k})s^{'}(\sigma_{i,j}\sigma_{j,k})^{-1}-s^{'}(\sigma_{i,k})s^{'}(\sigma_{i,j})s^{'}(\sigma_{i,k}\sigma_{i,j})^{-1}
	\end{align*}
	Since $f$ commutes with the diagram and by the choices of $s$ and $s^{'}$, we have $s^{'}(\sigma_{i,j})=f\circ s(\sigma_{i,j})$. Thus $\kappa=\phi\circ f$. Furthermore, the induced map on cohomology is $\eta$ by the definition of $f$. 
\end{proof}
\subsection{Generating sets of $B_{n}[4]$}
In this section we begin with a well known consequence of Schreier's formula which guarantees the existence of a finite generating set for $B_{n}[4]$. Then we prove Theorem \ref{normal gen set for B_{n}} and give a summary describing the difficulties in refining our normal generating set to a finite generating set.
\begin{prop}\label{schreier's formula}
	If $G$ is a group with a generating set of size $j$ and $H$ is a subgroup of finite index $k$, then there exists a finite generating set for $H$ of size at most $k(j-1)+1$.
\end{prop}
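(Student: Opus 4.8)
The plan is to run the Reidemeister--Schreier rewriting process on $G$ relative to $H$ and then bound the number of resulting generators by a spanning-tree count on the transversal. Fix a generating set $X=\{x_{1},\ldots,x_{j}\}$ for $G$. Since the statement concerns only the existence and cardinality of a generating set for $H$, I am free to choose the most convenient coset representatives, and I would take a Schreier transversal $T$ for $H$ in $G$: a set of reduced words in $X\cup X^{-1}$, one from each of the $k$ right cosets, closed under taking prefixes. Such a transversal exists---for instance, choose in each coset a representative of minimal word length, breaking ties by a fixed ordering of words. Writing $\overline{g}$ for the unique element of $T$ in the coset $Hg$, the Reidemeister--Schreier theorem asserts that the Schreier generators
\[
\gamma(t,x)=(tx)\,\overline{tx}^{\,-1},\qquad t\in T,\ x\in X,
\]
all lie in $H$ and together generate $H$. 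This already exhibits a generating set of size $kj$; the remaining work is to discard the trivial ones.

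Next I would count the trivial Schreier generators. Because $T$ is prefix-closed, it carries the structure of a tree rooted at the empty word $1$, in which each nonidentity $t\in T$ is written uniquely as $t=t'y$ with $t'\in T$ and $y\in X\cup X^{-1}$; this tree has exactly $k-1$ edges. Each such edge yields a trivial Schreier generator: if $y=x\in X$ then $\overline{t'x}=t$ and $\gamma(t',x)=1$, while if $y=x^{-1}$ then $\overline{tx}=t'$ and $\gamma(t,x)=1$. A short check shows that distinct edges produce distinct pairs $(t,x)$, so at least $k-1$ of the $kj$ Schreier generators are trivial. Hence $H$ is generated by the remaining
\[
kj-(k-1)=k(j-1)+1
\]
elements, which is the asserted bound.

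The main obstacle is bookkeeping rather than any deep idea: one must produce a genuinely prefix-closed transversal and then verify carefully, paying attention to inverse generators, that each of the $k-1$ tree edges contributes a \emph{distinct} trivial generator, since an off-by-one or a double-count here would weaken the bound. Everything else is a direct appeal to the Reidemeister--Schreier theorem together with the fact that $X$ generates $G$, which guarantees the transversal tree is connected and spans all $k$ cosets.
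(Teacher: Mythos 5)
Your proof is correct, and it coincides with the paper's treatment: the paper gives no proof of Proposition \ref{schreier's formula}, instead citing Lyndon and Schupp, where the argument is exactly this one --- Schreier generators over a prefix-closed transversal, with the $k-1$ spanning-tree edges accounting for distinct trivial generators. Your bookkeeping (distinctness of the pairs $(t,x)$ via the parent--child length comparison, and the handling of inverse letters) is sound, so nothing needs to be added.
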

This proposition is a well know fact and a proof can be found in Lyndon and Schupp (page 164 of \cite{L/S}). Since $\Z_{n}$ is an extension of $S_{n}$ by $\mathbb{Z}_{2}^{n\choose2}$, $\Z_{n}$ is finite of order $n!\cdot 2^{n\choose2}$. Furthermore, $B_{n}$ is finitely generated with a generating set of size $n-1$, therefore there exists a generating set for $B_{n}[4]$ of size at most:
	$$n!\cdot 2^{n\choose2}\cdot\left(n-2\right)+1$$
Now we give a proof of Theorem \ref{normal gen set for B_{n}}.
\begin{proof}[Proof of Theorem \ref{normal gen set for B_{n}}]
	Consider the quotient of the Artin presentation of $B_{n}$ given in \eqref{Artin braid} by the normal subgroup generated by $b_{i}^{4}$, $[b_{i}^2,b_{i+1}^2]$, and $[b_{i,i+2}^{2},b_{i+1,i+3}^{2}]$. Then this quotient has a group presentation:
	\begin{equation}\label{Z_n presentation generated by b_i}
		\left\langle b_{1},\ldots, b_{n-1}\middle\mid\begin{aligned} \hspace{.5em}&b_{i}^{4}=1,\hspace{.5em}[b_{i}^{2},b_{i+1}^{2}]=1,\hspace{.5em}[b_{i,i+2}^{2},b_{i+1,i+3}^{2}]=1
			\\
		&\hspace{.5em}b_{i}b_{i+1}b_{i}=b_{i+1}b_{i}b_{i+1},\hspace{.5em}
		b_{i}b_{j}=b_{j}b_{i}\text{ if }|i-j|>1
		\end{aligned}\right\rangle
	\end{equation}
	Now, consider the presentation for $\Z_{n}$ with generators $\{\bar{g}_{i,j}\}$ and $\{\bar{\sigma}_{i,j}\}$ for all $1\leq i<j\leq n$ with relations given by Table \ref{Relations of Z}. By relation $\mathfrak{R}2$, we can replace $\bar{g}_{i,j}$ with $\bar{\sigma}_{i,j}^{2}$ for all $1\leq i<j\leq n$. Therefore $\Z_{n}$ is generated by $\{\bar{\sigma}_{i,j}\}$ where $1\leq i<j\leq n$. Furthermore, by the case of relation $\mathfrak{R}3$ with $i<j<k$, $\{\bar{\sigma}_{i,j}\}$ is generated by the set $\{\bar{\sigma}_{i,i+1}\}$. Thus we can take a generating set of $\{\bar{\sigma}_{i,i+1}\}_{1\leq i\leq n-1}$ for $\Z_{n}$. 
	
	Now, since $\bar{\sigma}_{i,j}^{2}=\bar{g}_{i,j}$ and $\bar{g}_{i,j}^{2}=1$, therefore we can replace $\mathfrak{R}0$ and $\mathfrak{R}2$ with $\bar{\sigma}_{i,i+1}^{4}=1$ in $\Z_{n}$. By relation $\mathfrak{R}3$ and $\mathfrak{R}4$, we have $\bar{\sigma}_{i,i+1}\bar{\sigma}_{i+1,i+2}\bar{\sigma}_{i,i+1}^{-1}=\bar{\sigma}_{i,i+2}$ and $\bar{\sigma}_{i+1,i+2}^{-1}\bar{\sigma}_{i,i+1}\bar{\sigma}_{i+1,i+2}=\bar{\sigma}_{i,i+2}$ respectively. Therefore:
	$$\bar{\sigma}_{i,i+1}\bar{\sigma}_{i+1,i+2}\bar{\sigma}_{i,i+1}^{-1}=\bar{\sigma}_{i+1,i+2}^{-1}\bar{\sigma}_{i,i+1}\bar{\sigma}_{i+1,i+2}$$
	and we can replace relations $\mathfrak{R}3$ and $\mathfrak{R}4$ with the relation braid relation. Furthermore, $\mathfrak{R}5$ implies $[\bar{\sigma}_{i,i+1},\bar{\sigma}_{j,j+1}]=1$ for $|i-j|>1$. Replacing $\bar{g}_{i,j}$ with $\bar{\sigma}_{i,j}^{2}$ in relation $\mathfrak{R}1$ and restricting to $\{\bar{\sigma}_{i,i+1}\}_{1\leq i\leq n-1}$ we get $[\bar{\sigma}_{i,i+1}^{2},\bar{\sigma}_{i+1,i+2}^{2}]=1$ and $[\bar{\sigma}_{i,i+2}^{2},\bar{\sigma}_{i+1,i+3}^{2}]=1$. Thus we get the same presentation as \eqref{Z_n presentation generated by b_i} with $\bar{\sigma}_{i,i+1}$ replacing $b_{i}$. Recall, $\Z_{n}=B_{n}/B_{n}[4]$; therefore the relations of \eqref{Z_n presentation generated by b_i} which are not relations of \eqref{Artin braid} normally generate $B_{n}[4]$. 
\end{proof}
\paragraph{Finite generating set for $B_{n}[4]$}
Proposition \ref{schreier's formula} guarantees the existence of a finite generating set for $B_{n}[4]$, however the size of this generating set grows faster than exponentially in $n$. We would hope for a generating set which has polynomial growth in $n$ and the natural method of finding this would be to add necessary generators to our normal generating set until we obtain a finite generating set. Notice that our normal generating set contains only squares and commutators of pure braids. Therefore we would hope to take the union of a finite generating sets for squares of pure braids and the commutator subgroup of the pure braid group.

Cohen, Falk, and Randell proved there exists an epimorphism from the pure braid group to the free group of rank 2 \cite{C/F/R}. Since the commutator subgroup of the free group of rank 2 is not finitely generated, the existence of an epimorphism implies the commutator subgroup of the pure braid group is not finitely generated. Therefore we must first determine the intersection of the commutator subgroup of the pure braid group with the subgroup generated by squares of pure braids.

For odd integers $m\geq 1$ and $k\in\{2,4\}$, the work of Apple, Bloomquist, Gravel, and Holden determine certain quotient structures of $B_{n}[m]/B_{n}[km]$ \cite{A/B/G/H}. An alternative approach to determining a finite generating set for $B_{n}[4]$ would be to determine $B_{n}[4]$ as a group extensions and use the methods similar to section \ref{section on building presentations} to construct a group presentation. The first step in this method is to understand $B_{n}[8]$ or $B_{n}[16]$. 
\newpage


\begin{thebibliography}{11}
	\bibitem{A/B/G/H}
	Jessica Appel, Wade Bloomquist, Katie Gravel, Annie Holden. On quotients of congruence subrgroups of braid groups. https://arxiv.org/abs/2011.13876, 2020
	
	\bibitem{Birman}
	Joan S. Birman. \textit{Braids, links, and mapping class groups.} Princeton University Press, Princeton, N.J.; University of Tokyo Press, Tokyo, 1974. Annals of Mathematica Studies, No. 82.
		
	\bibitem{BKL}
	Joan Birman, Ki Hyoung Ko, and Sang Jin Lee. A new approach to the word and conjugacy problems in the braid groups. \textit{Adv. Math}, 139(2):322-353, 1998
		
	\bibitem{B/M}
	Tara E. Brendle and Dan Margalit. The level 4 braid group. \text{J. Reine Angew. Math.}, 735: 249-264, 2018.
		
	\bibitem{Brown}
	Kenneth S. Brown, \textit{Cohomology of Groups}, volume 87 of \textit{Graduate Texts in Mathematics}. Springer-Verlag, New York-Berlin, 1982.
		
		
	\bibitem{C/F/R}
	Daniel C. Cohen, Michael Falk, and Richard Randell. “Pure Braid Groups Are Not Residually Free.” Configuration Spaces (2012): 213–230.
		
	\bibitem{E/M}
	Samuel Eilenberg and Saunders Maclane. Cohomology Theory in Abstract Groups. II: Group Extensions with a Non-Abelian Kernel. Annals of Mathematics, vol. 48, no. 2, 1947, pp. 326–341. 
		
	\bibitem{F/M}
	Benson Farb and Dan Margalit,
	\textit{A primer on mapping class groups},
	volume 49 of \textit{Princeton Mathematical Series}. Princeton University Press, Princeton NJ, 2012
		
	\bibitem{K/M}
	Kevin Kordek and Dan Margalit. Representation stability in the level 4 braid group. https://arxiv.org/abs/1903.03119, 2019
		
	\bibitem{L/S}
	Roger C. Lyndon and Paul E. Schupp, \textit{Combinatorial Group Theory}, volume 89 of \textit{Ergebnisse der Mathematik und ihrer Grenzgebiete}. Springer-Verlag, Berlin-Heidelberg-New York 1977.
		
	\bibitem{Styl}
	Charalampos Stylianakis. Congruence Subgroups of braid groups.\textit{International Journal of Algebra and Computation}, vol 28, no 2. 2018, pp. 345-365
\end{thebibliography}
\end{document}